\documentclass[11pt,reqno]{amsart}
\usepackage[top=4.2cm, bottom=4cm, left=2.4cm, right=2.4cm]{geometry}
\usepackage[utf8]{inputenc}
\usepackage[USenglish]{babel}
\usepackage[T1]{fontenc} 
\usepackage{mathrsfs}
\usepackage{wasysym}
\usepackage{tikz-cd}
\usepackage{mathtools}
\usepackage{esint}
\usepackage{amsmath}
\usepackage{amssymb,epsfig}
\usepackage{amsthm}
\usepackage[absolute]{textpos}
\usepackage{mathtools,emptypage}
\usepackage{tikz}
\mathtoolsset{showonlyrefs}  % uncomment to tag only the referred equations (post-production)

\usetikzlibrary{decorations.markings, arrows.meta}

\usepackage[bookmarks=true]{hyperref}
\usepackage{xcolor}
\hypersetup{
    colorlinks,
    linkcolor={red!50!black},
    citecolor={blue!50!black},
    urlcolor={blue!80!black},
}
\numberwithin{equation}{section}

\newcommand{\sfd}{{\sf d}}
\newcommand{\restr}[1]{\lower3pt\hbox{$|_{#1}$}}

\newcommand{\Kliminf}{K\kern-3pt-\kern-2pt\mathop{\rm lim\,inf}\limits}  % Kuratowski liminf di insiemi
\newcommand{\dist}{\mathop{\rm dist}\nolimits}
\newcommand{\Lip}{\mathop{\rm Lip}\nolimits}          %Lipschitz constant

\newcommand{\R}{{\mathbb R}}
\newcommand{\N}{\mathbb N}
\newcommand{\diam}{{\rm diam}}

\newcommand{\loc}{{\rm loc}}

\newcommand{\mm}{\mathfrak m}                                %misura di riferimento

\newtheorem{theorem}{Theorem}[section]

\newtheorem{corollary}[theorem]{Corollary}
\newtheorem{lemma}[theorem]{Lemma}
\newtheorem{proposition}[theorem]{Proposition}
\newtheorem{fact}[theorem]{Fact}

\theoremstyle{definition}
\newtheorem{remark}[theorem]{Remark}
\newtheorem{definition}[theorem]{Definition}

\newtheorem{example}[theorem]{Example}
\newtheorem{question}[theorem]{Question}
\newtheorem{claim}[theorem]{Claim}

\begin{document}

\title[]{Extension properties of planar subsets supporting a weak $(1,1)$-Poincar\'e inequality}

\author{Miguel Garc\'ia-Bravo}

\address{Department of Mathematical Analysis and Applied Mathematics, Faculty of Mathematics,
University Complutense of Madrid, 28040 Madrid, Spain, Spain.  ICMAT (CSIC-UAM-UC3-UCM).}

\email{miguel05@ucm.es}

 \author{Tapio Rajala}        
\address{University of Jyv\"askyl\"a \\
         Department of Mathematics and Statistics \\
         P.O. Box 35  \\
         FI-40014 University of Jyv\"askyl\"a \\
         Finland}

\email{tapio.m.rajala@jyu.fi}

\thanks{The first named author acknowledges the support provided by the grant PID2022-138758NB-I00 from the Ministerio de Ciencia e Innovación of Spain.}
\subjclass[2000]{Primary 30L99. Secondary 46E35, 26B30.}
\keywords{}
\date{\today}

%%%%%%%%%%%%%%%%%%%%%%%%%%%%%%%%%%%%%%%%%%%%%%%%%%%%%%%%%%%%%%%%%%%%%

%%%%%%%%%%%%%%%%%%%%%%%%%%%%%%%%%%%%%%%%%%%%%%%%%%%%%%%%%%%%%%%%%%%%%

\begin{abstract}
We study Sobolev and $BV$-extension properties of planar subsets. In particular, we prove that fat Sierpi\'nski carpets that support a weak $(1,1)$-Poincaré inequality are $W^{1,1}$-extension sets, and we provide an explicit linear extension operator for them. We also show that 
for planar domains satisfying a weak $(1,1)$-Poincaré inequality the $W^{1,1}$-extension property and the $BV$-extension property are equivalent. Finally, as a consequence of the previous result, we show that if a simply-connected planar domain is Ahlfors regular and supports a weak $(1,1)$-Poincaré inequality, then it is a (linear) $W^{1,1}$-extension domain.

\end{abstract}

\maketitle

\section{Introduction}

We study extension properties for open and closed subsets $E \subset \R^2$ that satisfy a weak Poincar\'e inequality and are Ahlfors $2$-regular. We always understand  $\R^2$ endowed with the Euclidean distance and the $2$-dimensional Lebesgue measure. The extensions we are interested are for Sobolev $W^{1,p}$-functions and $BV$-functions.

\subsection{Basic notions}

In a more general context, in this paper $(X,\sfd,\mm)$ denotes a separable metric measure space where $\mm$ is a nonnegative Borel measure which is finite and positive on every ball. We say that a Borel set $E \subset X$ has the $W^{1,p}$-extension property if there exists a bounded (not necessarily linear) operator $T \colon W^{1,p}(E) \to W^{1,p}(X)$ such that $Tu|_E = u$ for all $u \in W^{1,p}(E)$. The same definition applies for $BV$ functions. For us, the space of Sobolev functions $W^{1,p}$ is understood in the sense of Shanmugalingam \cite{S00} using $p$-weak upper gradients, and the space of functions with bounded variation $BV$ is understood in the sense of Miranda \cite{Mir03} through approximations by Lipschitz functions. In Section \ref{sec:preliminaries} we give the precise definition of these notions. However, we warn the reader that all our results are established for either open or closed subsets of $\R^2$.

For a metric measure space $(X,\sfd,\mm)$ recall that $\mm$ is said to be doubling if there exists a constant $c_\mm>0$ so that $\mm(2B)\leq c_\mm \mm(B)$ for every open ball $B\subset X$. Also, $X$ is said to support a weak $(1,p)$-Poincaré inequality  for some $1\leq p<\infty$ if there are constants $C>0$, $\lambda\geq 1$ such that for all balls $B\subset X$, all integrable functions $u\colon \lambda B\to\R$ and all $L^p$ integrable upper gradients $\rho\colon \lambda B \to [0,\infty]$ of $u$ we have
    \begin{equation}\label{eq:1-PI}
   \dfrac{1}{\mm(B)} \int_B |u-u_B|\, d\mm\leq \dfrac{C\diam (B)}{\mm(\lambda B)}\int_{\lambda B} \rho^p\, d\mm.
    \end{equation}
For the definition of upper gradients we refer to Section \ref{sec:preliminaries}.
If \eqref{eq:1-PI} only holds for balls $B$ with radii at most $r_0>0$ we say that  $X$ supports a weak $(1,p)$-Poincaré inequality up to scale $r_0>0$.
A set $E \subset X$ satisfies a weak $(1,p)$-Poincar\'e inequality if $(E,\sfd|_{E\times E},\mm|_E)$ does as a metric measure space. A metric measure space is called a $(1,p)$-PI space if $X$ is complete, 
$\mm$ is doubling and $X$ supports a weak $(1,p)$-Poincaré inequality. Moreover, a set $E\subset X$ is said to be Ahlfors $s$-regular if there exists $s\geq 1$ and $C\geq 1$ so that for every  $r\in (0,\diam(X)]$ and $x\in E$ we have
\begin{equation}
C^{-1}r^s\leq \mm(B(x,r)\cap E)\leq C r^s.
\end{equation}
In particular, if $E$ is Ahlfors $s$-regular for some $s\geq 1$ then $\mm|_E$ is doubling.
A set $E\subset X$ is said to satisfy a measure density condition if there exists $c\in (0,1]$ so that for all $x\in E$ and $r\in (0,1]$ we have
\begin{equation}
    \mm(E\cap B(x,r))\geq c\,\mm (B(x,r)).
\end{equation}
Observe that in Euclidean spaces with the Lebesgue measure (denoted here by $|\cdot|$), a bounded set $E\subset \R^n$  is Ahlfors $n$-regular if and only if $E$ has the measure density condition. From now on we will often call a set $E\subset \R^n$ Ahlfors regular whenever it is Ahlfors $n$-regular. In case $E\subset \R^n$ is an open Ahlfors regular set, the Lebesgue differentiation theorem implies that $|\partial E|=0$. However, this conclusion does not always hold if $E$ is not open; see for instance the fat Sierpi\'nski carpets from Section \ref{sec:Siperpinski carpet}.

\subsection{Motivation}

The starting point for our discussion is the following result contained in \cite[Proposition 1.10]{GIZ23}, see also \cite[Proposition 5.1]{Bjo:Sha:07}.
We point out that the result still holds for $p=\infty$, by \cite{G-BIZ25}.

\begin{proposition}{\cite[Proposition 1.10]{GIZ23}}\label{prop:plargerthan1}
Let $(X,\sfd,\mm)$ be a $(1,p)$-PI space, let $E\subset X$ be measurable and let $1<p<\infty$. If $E$ satisfies a measure density condition and  supports a weak $(1,p)$-Poincaré inequality up to some scale then we have that $E$ is a $W^{1,p}$-extension set.
\end{proposition}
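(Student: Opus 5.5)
We would prove Proposition \ref{prop:plargerthan1} with a Whitney-type extension built from averages over $E$, in the spirit of Jones and Haj\l asz--Koskela--Tuominen. The measure density condition replaces the geometric assumptions on $E$, and the Poincar\'e inequality on $E$ controls the oscillation of the averages. The proof has three stages: reduce to functions on $E$ that have a Haj\l asz-type (maximal function) gradient, extend by a Whitney partition of unity, and estimate the extension on all of $X$.

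\emph{Step 1 (pointwise estimates on $E$).} Since $E$ satisfies the measure density condition, $\mm|_E$ is doubling up to scale $1$. Take $u\in W^{1,p}(E)$ with minimal $p$-weak upper gradient $g_u\in L^p(E)$. The weak $(1,p)$-Poincar\'e inequality on $E$ up to scale $r_0$ gives, by the standard telescoping argument over balls $B(x,2^{-k}r)\cap E$, the pointwise bound
\[
|u(x)-u(y)|\le C\,\sfd(x,y)\bigl(M_{\sigma}(g_u^{p})^{1/p}(x)+M_{\sigma}(g_u^{p})^{1/p}(y)\bigr)
\]
for Lebesgue points $x,y\in E$ with $\sfd(x,y)<r_0/(2\lambda)$. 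Here $M_\sigma$ is the restricted maximal operator on $(E,\mm|_E)$. Because $p>1$, we may first self-improve the Poincar\'e inequality to a $(1,q)$-inequality with some $q<p$ (Keith--Zhong, valid for complete doubling spaces; $E$ may be replaced by its closure since $|\overline E\setminus E|=0$ under the density condition and Lebesgue differentiation). Running the telescoping argument with $q$ then gives $h:=M(g_u^q)^{1/q}\in L^p$, which is a Haj\l asz gradient of $u$ at small scales with $\|h\|_{L^p(E)}\le C\|g_u\|_{L^p(E)}$. \textbf{This step is the main obstacle}: it is exactly where $p>1$ is needed, through boundedness of the maximal operator on $L^{p/q}$, and the self-improvement must be justified on $E$ itself. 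If Keith--Zhong is not available in this setting, I would instead try the Haj\l asz--Koskela approach directly.

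\emph{Step 2 (extension).} Take a Whitney covering $\{B_i=B(x_i,r_i)\}$ of $X\setminus\overline E$ with $r_i\approx\dist(x_i,E)$, truncated at scale $\min(1,r_0)$, together with a subordinate Lipschitz partition of unity $\varphi_i$ satisfying $\Lip\varphi_i\le C/r_i$. For each $i$ choose $e_i\in\overline E$ nearly closest to $x_i$, set $\tilde B_i=B(e_i,r_i)\cap E$, and define
\[
Tu=u \text{ on } E,\qquad Tu=\sum_i \varphi_i\, u_{\tilde B_i} \text{ on } X\setminus\overline E .
\]
For Whitney balls with $r_i$ large, multiply by a cutoff, so that the $L^p$ bound follows from $|u_{\tilde B_i}|^p\le \frac{1}{\mm(\tilde B_i)}\int_{\tilde B_i}|u|^p\,d\mm$. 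The measure density condition gives $\mm(\tilde B_i)\gtrsim\mm(B_i)$, and bounded overlap of the enlarged balls then yields $\|Tu\|_{L^p}\le C\|u\|_{W^{1,p}(E)}$.

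\emph{Step 3 (gradient estimate).} On a Whitney ball $B_i$, use $\sum_j\varphi_j=1$ to estimate the local Lipschitz constant of $Tu$ by $C r_i^{-1}\sum_{j\sim i}|u_{\tilde B_j}-u_{\tilde B_i}|$. The Poincar\'e inequality on $E$ at scale $\approx r_i$ bounds this by
\[
C\Bigl(\frac{1}{\mm(\tilde B_i^*)}\int_{\tilde B_i^*}g_u^p\,d\mm\Bigr)^{1/p},
\]
where $\tilde B_i^*$ is a fixed enlargement of $\tilde B_i$ intersected with $E$; the density condition lets us compare it with $\mm(B_i)$. Summing over $i$ with bounded overlap gives $\int_{X\setminus\overline E}\Lip(Tu)^p\,d\mm\le C\int_E g_u^p\,d\mm$.

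It remains to show that $Tu\in W^{1,p}(X)$ across $\partial E$. By Step 1 the function $u$ has the Haj\l asz gradient $h$ on $E$. The same averaging estimates show that $Tu$ has the Haj\l asz gradient $C\,Mh$ on $X$ at small scales: for a point $x$ in a Whitney ball $B_i$ and a point $y\in E$, compare $Tu(x)$ with $u_{\tilde B_i}$ and then with $u(y)$ along a chain of balls. Since $M$ is bounded on $L^p$ for $p>1$, we get $Tu\in M^{1,p}(X)$, and $M^{1,p}(X)\subset N^{1,p}(X)$ with norm control in doubling spaces, because $C\,Mh$ is a constant times a $p$-weak upper gradient. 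This gives $\|Tu\|_{W^{1,p}(X)}\le C\|u\|_{W^{1,p}(E)}$, and $Tu|_E=u$ by construction.
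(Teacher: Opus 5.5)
Your three-stage plan follows the route behind the cited result. The paper does not reprove Proposition~\ref{prop:plargerthan1}, but it indicates that the proof rests on $M^{1,p}=W^{1,p}$ for $p>1$ and a maximal-operator argument for a Whitney-type extension under the measure density condition. However, the bounded-overlap justifications in Steps~2 and~3 are false, so those estimates fail as written. The sets $\tilde B_i=B(e_i,r_i)\cap E$, and any fixed enlargements $\tilde B_i^*$, have bounded overlap only within a single scale. Take $E=\{x_2\le 0\}\subset\R^2$. For every large $k$ there are Whitney balls near $(0,2^{-k})$ with $e_i$ close to the origin, so a point $(0,-t)$ lies in roughly $\log_2(1/t)$ of the $\tilde B_i$. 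Consequently $\sum_i\int_{\tilde B_i}|u|^p\,d\mm$ behaves like $\int_E|u|^p\log\bigl(1/\dist(\cdot,\partial E)\bigr)\,d\mm$. This is not bounded by $C\|u\|^p_{L^p(E)}$: concentrate $|u|^p$ in a strip of width $\delta$ and the ratio grows like $\log(1/\delta)$. The same happens to $\sum_i\int_{\tilde B_i^*}g_u^p\,d\mm$ in Step~3. With $(1,p)$-averages of $g_u^p$ you cannot repair this with a maximal function, because $M$ is unbounded on $L^1$. For sets with empty interior, such as the carpets, there is no interior Whitney decomposition into which to reflect with disjoint balls. Compare Lemma~\ref{lem:sierpinski_ext_op.}: there, for $p=1$, bounded overlap of the reference balls comes from a Whitney covering of $S\setminus\partial Q$ relative to a single hole.

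The repair is pointwise domination by maximal functions on $X$ of zero extensions. Take $x\in B_i$ with $r_i$ below the truncation scale. The sets $\tilde B_j$ with $\varphi_j(x)\neq 0$ lie in $B(x,Cr_i)$ and, by measure density and doubling, satisfy $\mm(\tilde B_j)\ge c\,\mm(B(x,Cr_i))$. Hence
\[
|Tu(x)|\le C\,M(\chi_E u)(x),\qquad \Lip(Tu)(x)\le C\,M(\chi_E h)(x)\quad\text{or}\quad \Lip(Tu)(x)\le C\,\bigl(M(\chi_E g_u^q)(x)\bigr)^{1/q}.
\]
The Hardy--Littlewood theorem on $X$ with exponent $p>1$ (resp.\ $p/q>1$) then gives the $L^p$ bounds; this is the second essential use of $p>1$. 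Your last paragraph already does this for the gradient, reading $Mh$ as $M(\chi_E h)$. That makes the first half of Step~3 unnecessary, but the $L^p$ estimate of Step~2 must be redone the same way.

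A smaller point in Step~1: $\mm(\overline E\setminus E)=0$ lets you pass the Poincar\'e inequality from $E$ to $\overline E$ (restrict upper gradients to $E$), but not back. To apply the self-improved inequality on $\overline E$ to $u\in W^{1,p}(E)$ with its upper gradient on $E$, $u$ must extend to $\overline E$ with $g_u$ still a weak upper gradient. This is needed because curves in $\overline E$ can cross the null set $\overline E\setminus E$. It requires a genuine result on Newtonian functions on noncomplete spaces, as in work of A.~and J.~Bj\"orn, together with a local, up-to-scale version of Keith--Zhong.
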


The main question for us is to know if the above result holds for $p=1$; Question \ref{question}. Before considering this question, let us comment on some converse statements of Proposition \ref{prop:plargerthan1}.

On the one hand, if in Proposition \ref{prop:plargerthan1} the space $X$ is further assumed to be Alhfors $s$-regular for some $s\geq 1$, and we have that $E\subset X$ is a $W^{1,p}$-extension set for some $1\leq p<\infty$, then $E$ has the measure density condition. We refer to \cite{HKT2008b} for a proof of this result, and also to \cite{Koskela,HKT2008a} for the Euclidean situation. We also mention that in \cite[Proposition 4.2]{CKLR25} the authors prove the measure density condition without requiring any Ahlfors regularity of the set $E$. On the other hand, the validity of a weak $(1,p)$-Poincaré inequality up to some scale holds for $W^{1,p}$-extensions domains $\Omega \subset \R^n$ if $n\leq p<\infty$. The proof of this fact appears in \cite[Theorem 1.11]{GIZ23}, which heavily relies on \cite{K1998} and \cite[Theorem 5.1]{BB2019}. Up to our knowledge, it remains an open question whether \cite[Theorem 1.11]{GIZ23} also holds for $(1,p)$-PI spaces $X$ that are Ahlfors $s$-regular for some $1\leq s\leq p$. We also remark that there exist $W^{1,p}$-extension domains $\Omega\subset \R^n$ for $1\leq p<n$ which do not satisfy any weak $(1,q)$-Poincaré inequality; for instance Figure \ref{fig:triangle-square} in Section \ref{sec:example} illustrates  a typical situation of a $W^{1,p}$-extension set for $1\leq p<2$ which is not quasiconvex and hence does not satisfy any weak Poincaré inequality. Other examples can be found on \cite[Example 2.5]{Koskela}, \cite[Remark 5.2]{Bjo:Sha:07} or \cite[Example 7.9]{GIZ23}. 

As previously mentioned, our initial motivation for this work was to extend Proposition \ref{prop:plargerthan1} to the case $p=1$. The method for proving Proposition \ref{prop:plargerthan1} for the cases $p>1$ does not work for $p=1$. A crucial step in that proof, which fails for $p=1$, relies on the equality $M^{1,p}(X)=W^{1,p}(X)$ for $p>1$, where $M^{1,p}$ denotes the Haj{\l}asz-Sobolev space (see \cite{HKST15} and \cite[Example 23]{KS2008}). 

We now formulate this question in a general setting.

\begin{question}\label{question}
Suppose that $(X,\sfd,\mm)$ is an Ahlfors $s$-regular metric measure space satisfying a weak $(1,1)$-Poincar\'e inequality and let $E \subset X$ be a closed set or a domain so that $(E,\sfd,\mm|_E)$ is also Ahlfors $s$-regular and satisfies a weak  $(1,1)$-Poincar\'e inequality. Is then $E$ a $W^{1,1}$-extension set?
\end{question}

We have not been able to answer this question even in the Euclidean plane. However, we have an affirmative answer for an important class of planar sets, called fat Sierpi\'nski carpets.

\subsection{Fat Sierpi\'nski carpets}

Given a sequence $\mathbf a  = (a_i)_{i=1}^\infty$ of reciprocals of odd integers that are at least $3$, the Sierpi\'nski carpet $\mathcal S_\mathbf a$ is defined by dividing the unit square to a $1/a_1 \times 1/a_1$ grid of subsquares, leaving out the middle one and continuing in each of the remaining squares with a $1/a_2 \times 1/a_2$ grid, and so on. When $\mathbf a\in c_0$ the Hausdorff dimension of $\mathcal S_{\mathbf a}$ is $2$ (see \cite[Proposition 3.1]{Sierpinski}), and the set $\mathcal S_{\mathbf a}$ is called a {\em fat Sierpi\'nski carpet}. See Section \ref{sec:Siperpinski carpet} for the more precise definition. Our main result of the paper reads as follows.

\begin{theorem}\label{thm:Sierp-W11_ext._char}
    Let $\mathcal S\subset \R^2$ be a fat Sierpi\'nski carpet. Then there exists a bounded extension operator $T\colon W^{1,1}(\mathcal S)\to W^{1,1}(\R^2)$ if and only if $\mathcal S$ satisfies a weak $(1,1)$-Poincar\'e inequality.
    In this case, the extension operator $T$ can be chosen to be a linear.
\end{theorem}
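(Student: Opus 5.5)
The plan is to prove the two directions separately. The construction of the linear operator in the "if" direction will carry most of the work.

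\emph{Necessity.} Suppose $T\colon W^{1,1}(\mathcal S)\to W^{1,1}(\R^2)$ is a bounded extension operator. We want a weak $(1,1)$-Poincar\'e inequality on $\mathcal S$. Fix a ball $B=B(x,r)$ with $x\in\mathcal S$ and a function $u$ on $\mathcal S\cap\lambda B$ with upper gradient $\rho$. After a cutoff we may assume $u$ is globally defined on $\mathcal S$, and then apply $T$. Since $\R^2$ supports a $(1,1)$-Poincar\'e inequality and $\mathcal S$ is Ahlfors regular (it contains a definite proportion of every ball at every scale, as the removed squares have sizes $\to 0$ relative to the grid), we get
\[
\int_{B\cap\mathcal S}|u-u_B| \lesssim \int_{B}|Tu-(Tu)_B| \lesssim r\,|DTu|(\lambda B).
\]
The difficulty is that $T$ is only globally bounded, not local. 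I would instead use the standard scaling trick. In the self-similar-like structure, $\mathcal S\cap Q$ for a construction square $Q$ is again a fat carpet. So the natural route is to show that extension forces the quantitative condition on $\mathbf a$ that is known to characterize the Poincar\'e inequality, namely $\mathbf a\in\ell^2$ (by the known characterization of Mackay--Tyson--Wildrick, I expect). To prove this, I would test $T$ on functions that jump across the removed middle squares at one level $n$. Lower bounds for $|DTu|$ then force a summability condition that fails when $\mathbf a\notin\ell^2$. Equivalently, one could show that the failure of PI produces functions with small gradient in $\mathcal S$ whose extensions must have large variation near the holes.

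\emph{Sufficiency.} Assume $\mathcal S$ supports the $(1,1)$-PI, so $\mathbf a\in\ell^2$. Each removed open square $H$ (of side $s_H$) has boundary $\partial H\subset\mathcal S$. The extension of $u$ inside $H$ will be an explicit linear formula. I would take a Whitney-type construction relative to the square $H$: decompose $H$ into Whitney squares and assign to each one the average of $u$ over the corresponding reflected region in $\mathcal S$. This region is a neighboring annular piece of the surrounding square of the same level, scaled appropriately. The pieces are glued with a partition of unity. Alternatively, one can use the radial/trace construction: take the trace of $u$ on $\partial H$ and extend it by a linear Gagliardo-type averaging operator. I prefer the Whitney averaging because it is linear and works directly with $L^1$ norms of $u$ on the neighborhood $N_H\cap\mathcal S$ of $H$.

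The key estimate is then
\[
|DTu|(H)\lesssim \|\nabla u\|_{L^1(N_H\cap\mathcal S)},
\]
plus a zero-measure contribution on $\partial H$ (no jump), with bounded overlap of the $N_H$. This uses the Poincar\'e inequality on the pieces $N_H\cap\mathcal S$, which is where the PI hypothesis enters, at uniform constants across scales. We must also check that $Tu\in W^{1,1}(\R^2)$ globally, in particular that no singular part of the derivative lives on $\partial H$ or on the positive-measure set $\mathcal S$. Since $Tu=u$ on $\mathcal S$ and $|\partial\mathcal S|>0$, this requires showing that $Tu$ is absolutely continuous on almost every line. I would verify this by checking that the partial sums (extending only in holes of level $\le n$) converge in $W^{1,1}$, via the estimate above and summability.

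The main obstacle is the local, uniform-in-scale Poincar\'e control near each hole needed to bound the gradient of the Whitney averages. I would derive it from the global PI hypothesis restricted via self-similarity. The construction squares are themselves fat carpets with tail sequences, which satisfy PI with comparable constants.
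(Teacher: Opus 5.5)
There are genuine gaps in both directions. \emph{Necessity.} The weak $(1,1)$-Poincar\'e inequality on $\mathcal S_{\mathbf a}$ is equivalent to $\mathbf a\in\ell^1$ \cite[Theorem 1.5]{Sierpinski}, not to $\mathbf a\in\ell^2$. The condition $\mathbf a\in\ell^2$ characterizes the $(1,p)$-inequalities for $p>1$. So showing that extendability forces $\mathbf a\in\ell^2$ does not help. You must show that carpets with $\mathbf a\in\ell^2\setminus\ell^1$ are not $W^{1,1}$-extension sets. These carpets are $W^{1,p}$-extension sets for every $p>1$, so the obstruction has to be specific to $p=1$.

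The missing idea is as follows. If $\mathbf a\notin\ell^1$ then $\prod_{j>k}(1-a_j)=0$, so the horizontal line at height $a_1\cdots a_k/2$ meets $\mathcal S_{\mathbf a}$ in an $\mathcal H^1$-null set. Hence the strip $F_k=\mathcal S_{\mathbf a}\cap([0,1]\times[0,a_1\cdots a_k/2])$ has $P_{\mathcal S_{\mathbf a}}(F_k)=0$; equivalently, $\chi_{F_k}$ has zero $1$-weak upper gradient. On the other hand, the planar isoperimetric inequality gives $P_{\R^2}(\widetilde F_k)\gtrsim|F_k|^{1/2}$ for every extension $\widetilde F_k$. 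Since $|F_k|\to0$, there is no bounded $BV$-extension, and hence no bounded $W^{1,1}$-extension. Testing with functions that jump across the holes of a single level does not detect this. The point is that an entire line is blocked, up to a null set, by the holes of all levels together.

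\emph{Sufficiency.} Summing $|DTu|(H)\lesssim\|\rho_u\|_{L^1(N_H\cap\mathcal S)}$ over all holes requires two things at once: bounded overlap of the $N_H$, and a $(1,1)$-Poincar\'e inequality on $N_H\cap\mathcal S$ with constants uniform in $H$. The natural candidates give only one of these.
\begin{itemize}
\item The construction squares (rescaled tail carpets) are the natural source of uniform Poincar\'e inequalities, but each point of $\mathcal S$ lies in one of them at every level.
\item The annuli $3\overline H\setminus H$ are nested across levels too. If $H$ has level $n$ and $H'$ is the level-$(n+1)$ hole in one of the eight squares surrounding $H$, then $3\overline{H'}\subset 3\overline H\setminus H$, so the overlap is unbounded.
\end{itemize}
The paper resolves this with the sets $S_Q=(3\overline Q\setminus Q)\setminus\bigcup_{m>n}\bigcup_{R\in\mathcal R_{\mathbf a,m}}3R$. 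These are essentially pairwise disjoint, and their uniform Ahlfors regularity and $(1,1)$-Poincar\'e inequality are verified through \cite[Theorem 1.3]{EG22}, a criterion that needs $\mathbf a\in\ell^1$.

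Your global step is also not yet an argument. The partial sums are undefined in the unfilled holes. If you fill those holes with some auxiliary extension of $u$, a Cauchy argument only shows that $\nabla Tu$ agrees a.e.\ on $\mathcal S$ with the gradient of that auxiliary function, and you have no way to bound this by $\rho_u$. The paper instead proceeds as follows:
\begin{itemize}
\item it takes $u$ Lipschitz;
\item it uses that each hole extension is continuous up to $\partial Q$ with boundary values $u$;
\item it uses that almost every coordinate line meets only finitely many holes, which again follows from $\mathbf a\in\ell^1$ and is false for $\mathbf a\in\ell^2\setminus\ell^1$.
\end{itemize}
Such a line splits into finitely many segments lying in $\mathcal S$, where $|u(b)-u(a)|\le\int\rho_u$, and finitely many chords of holes. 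This yields the ACL property, with the derivative controlled by $\rho_u$ on $\mathcal S$ and by $|\nabla Tu|$ in the holes.
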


The validity of weak $(1,p)$-Poincar\'e inequalities on fat Sierpi\'nski carpets was studied in \cite{Sierpinski}, by Mackay, Tyson and Wildrick. The most relevant result in  our context is \cite[Theorem 1.5]{Sierpinski} according to which 
$\mathcal S_{\mathbf a}$ satisfies a weak $(1,1)$-Poincar\'e inequality if and only if $\mathbf a \in \ell^1$. Moreover, by \cite[Theorem 1.6]{Sierpinski}, fat Sierpi\'nski carpets $\mathcal S_{\mathbf a}$ satisfy a weak $(1,p)$-Poincaré inequality for some or any $p>1$ if and only if $ \mathbf a\in\ell_2$, and in this case $\mathcal S_{\mathbf a}$ are known to be Ahlfors regular, c.f. \cite[Proposition 3.1]{Sierpinski}. We recall that if a set satisfies a weak $(1,p)$-poincaré inequality for some $p\geq 1$ then it satisfies a weak $(1,q)$-Poincaré inequality for any $q\geq p$. For some generalizations of \cite{Sierpinski}  we refer to \cite{EG21,EG22}. 

The paper \cite{EG21} is of particular interest as it provides a necessary geometric condition satisfied by Ahlfors regular planar sets with a weak $(1,1)$-Poincaré inequality. As noted in \cite[Remark 1.6]{EG22}, this condition is nearly optimal. Namely, \cite[Theorem 4.40]{EG21} asserts\footnote{The proof of \cite[Theorem 4.40]{EG21} concerns Loewner carpets and relies on \cite[Theorem 4.32]{EG21}, which assumes that the closed set $E$ has an empty interior. However, the result remains valid even if $E$ has a nonempty interior. Indeed, this assumption is only used in \cite[Theorem 4.32]{EG21} to guarantee that the connected components $\Omega_i$ of $E^c$ are Jordan domains. If $E$ has a nonempty interior, one can introduce additional holes within $E$ to force an empty interior without destroying its other properties, after which \cite[Theorem 4.40]{EG21} applies.} that if $E\subset\R^2$ is a closed Ahlfors regular set satisfying a weak $(1,1)$-Poincaré inequality, 
and if $E^c = \bigcup_{i \in I} \Omega_i$ is the decomposition of its complement into open, connected components, then each $\Omega_i$ is a Jordan uniform domain (in the extended plane) and there exists some $\lambda>0$ so that for $i\neq j$,
$$\dist (\Omega_i,\Omega_j)\geq \lambda \min\{\diam(\Omega_i),\diam(\Omega_j) \}. $$
The converse does not hold in general, as demonstrated by the fat Sierpiński carpets $\mathcal{S}_{\mathbf{a}}$ for $\mathbf{a} \in \ell_2 \setminus \ell_1$ (see \cite{Sierpinski}), which satisfy this geometric condition and support a weak $(1,p)$-Poincaré inequality for every $p > 1$, yet fail to do so for $p = 1$.

Another classical property of sets supporting a weak Poincaré inequality is quasiconvexity. For the particular case of $p=1$, 
if a Borel set $E\subset \R^n$ is Ahlfors regular and satisfies a weak $(1,1)$-Poincaré inequality, then $E$ is quasiconvex. We refer to \cite[Theorem 9.4.1]{HKST15}  for a proof of this result in the case of complete sets and to \cite[Remark 3.3]{D-CJS2016} for the non-complete case. On the other hand, quasiconvexity is not a sufficient condition for the validity of weak Poincaré inequalities; a prime example is the classical Sierpiński carpet $\mathcal S_{\mathbf a}$ with $\mathbf a=(1/3,1/3,1/3,\dots)$.

Unfortunately, our method of proof for Theorem \ref{thm:Sierp-W11_ext._char} does not extend to the general case of Ahlfors regular closed sets $E\subset \R^2$ that satisfy a weak $(1,1)$-Poincaré inequality. This is illustrated in Example \ref{ex:method_fails}.

\subsection{$BV$ and $W^{1,1}$-extension sets with the weak $(1,1)$-Poincaré inequality}

We now move the discussion to $BV$-extension sets. In particular in Question \ref{question} with the same assumptions one can ask if $E$ is a $BV$-extension set. We will see in Theorem \ref{prop:equiv_of_BV_and W11} that for bounded domains $\Omega \subset \R^2$ supporting a weak $(1,1)$-Poincaré inequality the $W^{1,1}$-extension property and the $BV$-extension property are equivalent. As a consequence, a positive answer to Question \ref{question} in the $BV$-situation  for planar bounded domains would imply a positive answer in the $W^{1,1}$ case too.

The theory of $BV$-extension sets starts with the work of Burago and Mazy'a. The result in \cite{BM1967}, together with \cite[Lemma 2.1]{KMS2010}, establishes that a bounded domain $\Omega\subset \R^n$ is a $BV$-extension domain if and only if there exists a constant $C>0$ so that any set $F\subset \Omega$ of finite perimeter in $\Omega$ admits an extension $\widetilde F\subset\R^n$, modulo measure zero sets, that is 
\begin{equation}\label{eq:intro_1}
|((\widetilde F\cap \Omega) \setminus F)\cup (F\setminus (\widetilde F\cap \Omega))|=0,
\end{equation}
so that the following estimate for the perimeters hold 
\begin{equation}\label{eq:intro_2}
P(\widetilde F,\R^n)\leq CP(F,\Omega).\end{equation} 

Moreover, if we restrict ourselves to simply connected planar domains $\Omega\subset \R^2$ we know from \cite{KMS2010} that the $BV$-extension property is equivalent to the quasiconvexity of $\R^2\setminus \Omega$.

The results in \cite{BM1967} have been generalized very recently in \cite{CKR23} to metric measure spaces $X$. See Section \ref{sec:preliminaries} for the definitions of $BV$ functions and perimeter of a set both in the Euclidean setting and in metric measure spaces.

\begin{proposition}{\cite[Proposition 3.4]{CKR23}}\label{prop:BV_ext_per_full_norm}
   Let $(X,\sfd,\mm)$ be a complete separable metric measure space where $\mm$ is a nonnegative Borel measure finite on bounded sets. Then a Borel set  $E\subset X$ is a $BV$-extension set if and only if there is a constant $C>0$ so that for every set $F\subset E$ of finite perimeter in $E$ there exists $\widetilde F\subset X$ with 
\begin{enumerate}
    \item[(PE1)] $\mm(( F\setminus (\widetilde F\cap E))\cup((\widetilde F\cap E)\setminus F))=0  $.
    \item[(PE2)] $\mm(\widetilde F)+ P_X(\widetilde F)\leq C(\mm(F)+P_E(F)).$
\end{enumerate}
\end{proposition}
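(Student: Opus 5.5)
The proof goes in two directions: from an extension operator to extensions of sets, via truncation and the coarea formula, and back from sets to functions, via a layer-cake superposition of extended superlevel sets. Throughout I use the coarea formula in metric measure spaces in the form of Miranda \cite{Mir03}: for $v\in BV(Y)$, $|Dv|(Y)=\int_{-\infty}^{\infty}P_Y(\{v>t\})\,dt$.

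\emph{Extension operator gives extension of sets.} Assume $T\colon BV(E)\to BV(X)$ is a bounded extension operator with constant $C$. Take $F\subset E$ of finite perimeter in $E$ and set $u=\chi_F\in BV(E)$, so that $\|u\|_{BV(E)}=\mm(F)+P_E(F)$. Replace $Tu$ by the truncation $v=\min\{\max\{Tu,0\},1\}$. Truncation does not increase the $L^1$ norm or the total variation, and $v=\chi_F$ $\mm$-a.e.\ on $E$. The layer-cake formula and the coarea formula give
\[
\int_0^1\bigl(\mm(\{v>t\})+P_X(\{v>t\})\bigr)\,dt\le \|v\|_{L^1(X)}+|Dv|(X)\le C(\mm(F)+P_E(F)).
\]
Hence there is some $t\in(0,1)$ for which the integrand is at most the integral. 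For that $t$ put $\widetilde F=\{v>t\}$. Then (PE2) holds, and (PE1) holds because $\{v>t\}\cap E=F$ up to a null set.

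\emph{Extension of sets gives extension operator.} Conversely, assume the set-extension property with constant $C$. Since $u=u^+-u^-$, it is enough to treat $u\ge0$. For $t>0$ let $F_t=\{u>t\}$. By the coarea formula in $E$,
\[
\int_0^\infty\bigl(\mm(F_t)+P_E(F_t)\bigr)\,dt=\|u\|_{L^1(E)}+|Du|(E)<\infty,
\]
so $F_t$ has finite perimeter in $E$ for a.e.\ $t$. For each such $t$ I choose an extension $\widetilde F_t$ satisfying (PE1) and (PE2), and define
\[
Tu(x)=\int_0^\infty\chi_{\widetilde F_t}(x)\,dt .
\]
On $E$, by (PE1) and Fubini, this is $\int_0^\infty\chi_{F_t}\,dt=u$ a.e. Also $\|Tu\|_{L^1(X)}=\int_0^\infty\mm(\widetilde F_t)\,dt$. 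For the variation I use the subadditivity and lower semicontinuity of total variation: approximate the integral by Riemann sums $h\sum_k\chi_{\widetilde F_{kh+s}}$, averaging over the shift $s\in[0,h)$. This should give $|D\,Tu|(X)\le\int_0^\infty P_X(\widetilde F_t)\,dt$. Together with (PE2) this yields $\|Tu\|_{BV(X)}\le C\|u\|_{BV(E)}$. No linearity is needed, matching the definition of extension set.

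\emph{Main obstacle: measurability.} The hard step is choosing $t\mapsto\widetilde F_t$ measurably, so that $(t,x)\mapsto\chi_{\widetilde F_t}(x)$ is jointly measurable and Fubini applies. I would work in the Polish space $\mathcal M$ of measurable subsets of $X$ modulo null sets, metrized by $L^1_{\rm loc}$ convergence of characteristic functions. The map $t\mapsto F_t$ into the corresponding space for $E$ is measurable, since it is monotone. The constraint set
\[
\Phi(t)=\{G\in\mathcal M:\ \text{(PE1) holds for } F_t \text{ and }\mm(G)+P_X(G)\le C(\mm(F_t)+P_E(F_t))\}
\]
is nonempty for a.e.\ $t$. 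It is closed, because $G\mapsto\mm(G)+P_X(G)$ is lower semicontinuous under $L^1_{\rm loc}$ convergence and (PE1) is a closed condition. The graph of $\Phi$ is Borel, since its defining conditions are Borel in $(t,G)$. So the Jankov--von Neumann (or Aumann) selection theorem gives a Lebesgue-measurable selection $t\mapsto\widetilde F_t$, which is enough for the argument above.

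If the selection step turns out to be delicate, the fallback is the discretized version $w_h=h\sum_k\chi_{\widetilde F_{kh+s_h}}$ with a good shift $s_h$. Then $\|w_h\|_{BV(X)}\le C\|u\|_{BV(E)}$ and $|w_h-u|\le h$ on $E$, and the remaining task is to extract an $L^1_{\rm loc}$ limit as $h\to0$. That compactness is the part needing care in a general complete separable space.
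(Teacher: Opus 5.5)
The paper does not prove this statement. It is imported from \cite[Proposition 3.4]{CKR23}, so there is no in-paper argument to compare yours with.

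Your proof is correct. The direction from an operator to sets (truncate $T\chi_F$ to $[0,1]$, then pick a good superlevel set via layer-cake and coarea) is the standard one.

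For the converse, the superposition $Tu=\int_0^\infty\chi_{\widetilde F_t}\,dt$ is the natural construction, and you have located the real difficulty correctly. The discretize-and-pass-to-the-limit version in your last paragraph would need $BV$-compactness, which an arbitrary complete separable space does not provide. So the measurable selection is what makes the argument work at the stated generality, and the fallback is not needed.

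The selection step is sound as you describe it:
\begin{itemize}
\item sets modulo null sets with the $L^1_{\rm loc}$ metric form a Polish space;
\item $t\mapsto F_t$ is right-continuous, hence Borel;
\item $\mm$, $P_E$ and $P_X$ are lower semicontinuous, so the graph of $\Phi$ is Borel;
\item the projection of the graph contains every $t>0$ with $P_E(F_t)<\infty$;
\item Jankov--von Neumann then gives a universally measurable selection.
\end{itemize}

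Two points deserve a line each when you write it out.

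First, you do not need a jointly measurable representative of $(t,x)\mapsto\chi_{\widetilde F_t}(x)$. You can define $Tu$ as a Bochner integral in $L^1(X)$, which is legitimate since $\int_0^\infty\mm(\widetilde F_t)\,dt\le C\|u\|_{BV(E)}$, and restriction to $E$ commutes with this integral.

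Second, consider the bound $|D\,Tu|(X)\le\int_0^\infty P_X(\widetilde F_t)\,dt$. Your shift-averaged Riemann sums do give it: the shift-averaged $L^1$ error tends to $0$ by density of continuous compactly supported integrands, and the shift-averaged variation sum equals the integral exactly. A quicker route is to note that $v\mapsto|Dv|(X)$ is a lower semicontinuous sublinear functional on $L^1(X)$. It is therefore a supremum of continuous linear functionals, and these commute with the Bochner integral.

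Finally, the reduction to $u\ge 0$ is fine, because the coarea formula gives $|Du^+|_E(E)+|Du^-|_E(E)=|Du|_E(E)$.
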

This result will be useful in the proof of Theorem \ref{thm:Sierp-W11_ext._char} for showing that fat Sierpinksi carpets $\mathcal S_\mathbf a$ for $a\notin \ell_1$ cannot be $BV$-extension set, and hence neither $W^{1,1}$-extension set.

The relation between $W^{1,1}$-extension sets and $BV$-extension sets has been studied in  \cite{KMS2010,BR21,CKR23,CKLR25}. For instance, it is known that every $W^{1,1}$-extension set $E\subset \R^n$ is a $BV$-extension set whenever $E$ is a domain (see \cite[Lemma 2.4]{KMS2010}) or a closed set (see \cite[Proposition 3.4]{CKLR25}). On the contrary, there exist examples of $BV$-extension domains, as for example the slit disc, which are not $W^{1,1}$-extension sets. An example of a metric measure space admitting a closed $BV$-extension subset that fails to be a $W^{1,1}$-extension set was given in \cite[Example 3.7]{CKLR25}. However, in the Euclidean setting, the situation remains unclear: \cite[Question 1.3]{CKLR25} asks whether a set $E\subset \R^2$ that is a $BV$-extension closed set must also be a $W^{1,1}$-extension set. A partial answer is given in  \cite[Theorem 1.3]{CKLR25}, establishing that if $E\subset \R^2$ is a compact $BV$-extension set with $\R^2\setminus E$ consisting on finitely many components then $E$ is a $W^{1,1}$-extension set.

As we just have explained, it is well known that $BV$ extension domains or closed sets do not necessarily have to be, in general, $W^{1,1}$-extension sets. However, any existing counterexample in this direction does not satisfy a weak $(1,1)$-Poincaré inequality, and it seems therefore interesting to study whether the $BV$-extension property and the $W^{1,1}$-extension property could be equivalent under this further assumption. In this paper we have managed to give a positive solution for the case of planar bounded domains $\Omega\subset\R^2$.  This is stated as our  second main result.
\begin{theorem}
\label{prop:equiv_of_BV_and W11}
       Let $\Omega\subset\R^2$ be a bounded domain satisfying the weak $(1,1)$-Poincaré inequality. Then $\Omega$ is a $BV$-extension domain if and only if $\Omega$ is a $W^{1,1}$-extension domain.
\end{theorem}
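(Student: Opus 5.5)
One direction is already available in the literature. By \cite[Lemma 2.4]{KMS2010}, every $W^{1,1}$-extension domain is a $BV$-extension domain, so only the converse needs work. The plan is to prove that converse through a known characterization of planar $W^{1,1}$-extension domains: a bounded domain $\Omega\subset\R^2$ is a $W^{1,1}$-extension domain if and only if it is a $BV$-extension domain and, in addition, $\Omega$ is ``locally connected at the boundary'' in a quantitative sense. I would use this in the form of Koskela--Rajala--Zhang / García-Bravo--Rajala for bounded planar domains. Concretely, the extra requirement is that for every set $F\subset\Omega$ of finite perimeter, the extension $\widetilde F$ from (PE1)--(PE2) in Proposition \ref{prop:BV_ext_per_full_norm} can be chosen so that $\mathcal H^1(\partial^M\widetilde F\cap\partial\Omega)=0$. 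An alternative formulation is that $\R^2\setminus\Omega$ is quasiconvex relative to curves that meet $\partial\Omega$ only in a null set.

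The key steps are as follows.

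First, I would use the weak $(1,1)$-Poincaré inequality on $\Omega$ to get two geometric facts. By the results cited for quasiconvexity, $\Omega$ is quasiconvex. Moreover, $\Omega$ satisfies a relative isoperimetric inequality inside $\Omega$ at every scale: for a ball $B$ centred in $\Omega$,
\[
\min\{|F\cap B|,|B\cap\Omega\setminus F|\}^{1/2}\lesssim P(F,\lambda B\cap\Omega).
\]
The main consequence I expect to need is that $\Omega$ cannot have ``two-sided'' boundary pieces such as slits. More precisely, $\mathcal H^1$-a.e.\ point of $\partial\Omega$ should be accessible from $\Omega$ from only one side. The reason is that a slit would allow a function to jump across it while having zero gradient in $\Omega$ near the slit, and this violates the $(1,1)$-Poincaré inequality on balls straddling the slit.

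Second, starting from a $BV$-extension $\widetilde F$ of a finite-perimeter set $F\subset\Omega$, I would modify $\widetilde F$ outside $\Omega$ so that its reduced boundary does not charge $\partial\Omega$. Set $\Omega^c=\R^2\setminus\overline\Omega$. On the part of $\partial\Omega$ where the trace of $\chi_F$ is $1$, I add to $\widetilde F$ a thin exterior neighbourhood; where the trace is $0$, I remove such a neighbourhood. The cost of this modification is controlled by the $\mathcal H^1$-measure of the relevant portion of $\partial\Omega$. To bound that measure by $P(F,\Omega)+|F|$, I would invoke the Poincaré inequality, which gives a boundary trace inequality of the form $\int_{\partial\Omega}|{\rm tr}\,u|\,d\mathcal H^1\lesssim \|u\|_{BV(\Omega)}$. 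Once this is in place, a $BV$ function on $\R^2$ that restricts to $u\in W^{1,1}(\Omega)$ and whose jump set does not charge $\partial\Omega$ can be mollified away from $\Omega$, via a Whitney-type smoothing in $\R^2\setminus\overline\Omega$, to give a $W^{1,1}$ extension. Finally, I would use the coarea formula and the layer-cake decomposition of $u$ to pass from sets to functions.

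The main obstacle is the trace inequality and the one-sidedness of $\partial\Omega$. It is not clear that a weak $(1,1)$-Poincaré inequality by itself gives $\mathcal H^1(\partial\Omega)<\infty$, or a trace estimate. I would first try to prove it locally. Cover $\partial\Omega$ by Whitney-adapted balls and apply the Poincaré inequality in each of them to show that the jump of $u$ across $\partial\Omega$, seen from two different accessible sides, is controlled by $|Du|(\lambda B\cap\Omega)$. This should give $\mathcal H^1(\partial^M\widetilde F\cap\partial\Omega)=0$ after a correction that costs a constant multiple of $P(F,\Omega)$.
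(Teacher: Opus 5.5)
You handle the direction ``$W^{1,1}$-extension $\Rightarrow$ $BV$-extension'' correctly via \cite[Lemma 2.4]{KMS2010}. Aiming the converse at the strong extension property for sets of finite perimeter (\cite[Theorem 1.3]{BR21}) is also a legitimate target. The gap is in how you propose to reach that target.

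Your central step is a boundary trace inequality $\int_{\partial\Omega}|\mathrm{tr}\,u|\,d\mathcal H^1\lesssim\|u\|_{BV(\Omega)}$, and this is false under the hypotheses of the theorem. Take the interior of the von Koch snowflake: it is a uniform domain, so it supports a weak $(1,1)$-Poincar\'e inequality \cite{Bjo:Sha:07} and is a $W^{1,1}$- and $BV$-extension domain \cite{Jones}. But $\mathcal H^1(\partial\Omega)=\infty$, so $u\equiv 1$ already violates the inequality. For the same reason, adding or removing a thin exterior neighbourhood of the part of $\partial\Omega$ where the trace of $\chi_F$ is $1$ or $0$ costs perimeter comparable to the length of that part. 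That length is not controlled by $|F|+P(F,\Omega)$, and for such boundaries a trace need not even be defined. A Poincar\'e inequality inside $\Omega$ does not yield length bounds for $\partial\Omega$, so the Whitney-ball localisation you sketch cannot close this step, and the plan does not go through as written. (Separately, your ``alternative formulation'' via curves meeting $\partial\Omega$ in null sets is only a characterization for simply connected domains, while the theorem concerns arbitrary bounded domains.)

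The missing idea is that you do not need to build the strong extension by hand. By \cite[Theorem 1.4]{BR21}, a bounded planar $BV$-extension domain is a $W^{1,1}$-extension domain if and only if $H:=\partial\Omega\setminus\bigcup_i\overline{\Omega_i}$ is purely $1$-unrectifiable, where the $\Omega_i$ are the components of $\R^2\setminus\overline\Omega$. The part of the boundary adjacent to the exterior components is handled there by the $BV$-extension hypothesis. Your ``no slits'' heuristic is exactly what remains to prove, but the right formulation is this pure unrectifiability, not one-sided accessibility at $\mathcal H^1$-a.e.\ boundary point. The argument runs as follows.

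First obtain Ahlfors regularity of $\Omega$ from the $BV$-extension property \cite[Proposition 2.3]{BR21}; your quasiconvexity and isoperimetric claims also silently need this. Suppose a Lipschitz graph $\Gamma$ met $H$ in positive length. Let $F$ be the part of $\Omega$ above $\Gamma$, so that $P(F,B\cap\Omega)=\mathcal H^1(\Gamma\cap B\cap\Omega)$ for balls $B$.

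At $x\in\Gamma\cap H$, $x$ lies in no $\overline{\Omega_i}$, so the cones above and below $x$ contain points of $\Omega$ arbitrarily close to $x$. Ahlfors regularity and a continuity-in-$r$ argument then give radii $r\to0$ with both $|F\cap B(x,r)|\gtrsim r^2$ and $|(\Omega\setminus F)\cap B(x,r)|\gtrsim r^2$.

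The relative isoperimetric inequality coming from the weak $(1,1)$-Poincar\'e inequality then forces $\mathcal H^1(\Gamma\cap\Omega\cap B(x,\lambda r))\gtrsim r$. This contradicts the fact that $\Gamma\cap\Omega$ has density zero at $\mathcal H^1$-a.e.\ point of $\Gamma\setminus\Omega\supset\Gamma\cap H$.
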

As an immediate application, we state the following interesting corollary and include its straightforward proof for completeness.
\begin{corollary}
    Let $\Omega\subset \R^2$ a bounded planar simply connected domain. Suppose that $\Omega$ is Ahlfors $2$-regular and that it satisfies a weak $(1,1)$-Poincaré inequality. Then there exists a linear bounded extension operator $T\colon W^{1.1}(\Omega)\to W^{1,1}(\R^2)$.
\end{corollary}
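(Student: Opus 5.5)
The plan is to combine Theorem \ref{prop:equiv_of_BV_and W11} with the planar characterization of $BV$-extension domains due to Koskela, Miranda and Shanmugalingam \cite{KMS2010}: a bounded simply connected domain $\Omega\subset\R^2$ is a $BV$-extension domain if and only if $\R^2\setminus\Omega$ is quasiconvex. Thus it suffices to show that $\R^2\setminus\Omega$ is quasiconvex. Theorem \ref{prop:equiv_of_BV_and W11} then gives the $W^{1,1}$-extension property, and a separate argument upgrades it to a linear operator.

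For quasiconvexity of the complement I would use the necessary geometric condition from \cite[Theorem 4.40]{EG21}, applied to the closed set $E=\overline\Omega$. First, $\overline\Omega$ is Ahlfors regular because $\Omega$ is, and $|\partial\Omega|=0$ since $\Omega$ is open and Ahlfors regular (Lebesgue differentiation, as noted in the introduction). Consequently $\overline\Omega$ also supports a weak $(1,1)$-Poincaré inequality: a Lipschitz upper gradient on $\overline\Omega$ restricts to one on $\Omega$, the measures coincide, and Lipschitz functions suffice for the $(1,1)$-Poincaré inequality in complete doubling spaces. The theorem then says that every complementary component of $\overline\Omega$ is a Jordan uniform domain in the extended plane. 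Because $\Omega$ is simply connected and bounded, I would argue that $\R^2\setminus\overline\Omega$ has a single unbounded component (up to components forced by pinching, which the separation condition controls), and that $\overline\Omega$ has no interior holes. Then $\R^2\setminus\Omega$ is the closure of a uniform domain, hence quasiconvex. The delicate point is relating $\R^2\setminus\Omega$ to $\R^2\setminus\overline\Omega$. Ahlfors regularity of $\Omega$ rules out outward cusps, and the Jordan property of the complementary components prevents $\partial\Omega$ from having pieces not on the boundary of a complementary component. For instance, no slit can occur, since a slit would violate the Poincaré inequality on $\Omega$, because $\Omega$ itself is quasiconvex by \cite{D-CJS2016}.

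Linearity is the remaining issue. Uniform domains in the plane whose complements are quasiconvex are Jones-type domains, so I would try to show that $\Omega$ itself is a uniform domain. In the plane, a bounded simply connected domain is uniform (a quasidisk) if and only if both $\Omega$ and its complement are quasiconvex, or more precisely linearly locally connected. Quasiconvexity of $\Omega$ comes from \cite{D-CJS2016}, and quasiconvexity of the complement comes from the step above; together they give the LLC conditions. Jones' theorem then gives a linear extension operator $W^{1,1}(\Omega)\to W^{1,1}(\R^2)$. If this route fails, I would instead rely on the linear operator constructed in the proof of Theorem \ref{prop:equiv_of_BV_and W11}.

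I expect the main obstacle to be the step transferring the structure theorem of \cite{EG21} from $\overline\Omega$ to the statement that $\R^2\setminus\Omega$ is quasiconvex, in particular the treatment of boundary points of $\Omega$ that are not interior to $\overline{\R^2\setminus\overline\Omega}$.
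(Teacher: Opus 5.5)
\textbf{There is a genuine gap at the step you flag yourself, and your heuristics do not close it.} Theorem 4.40 of \cite{EG21}, applied to $\overline\Omega$, only describes the components of $\R^2\setminus\overline\Omega$. It says nothing about the set $H=\partial\Omega\setminus\bigcup_i\overline{\Omega}_i$, and $H$ is exactly what decides whether $\R^2\setminus\Omega$ is quasiconvex.

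For example, let $K\subset\overline{B(0,1)}$ be a von Koch arc meeting $\partial B(0,1)$ only at one endpoint, and set $\Omega=B(0,1)\setminus K$.
\begin{itemize}
\item $\Omega$ is bounded, simply connected and Ahlfors regular.
\item $\overline\Omega=\overline{B(0,1)}$ satisfies all the conclusions of \cite{EG21}.
\item Yet $\R^2\setminus\Omega$ is not even rectifiably connected.
\end{itemize}
So the Jordan property of the complementary components cannot be what rules out pieces of $\partial\Omega$ lying off their boundaries. Likewise, the relative separation condition does not exclude bounded components of $\R^2\setminus\overline\Omega$. Since $\R^2\setminus\Omega$ is connected, such a component would have to be linked to the unbounded one through $H$, so it is the same problem.

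Excluding all of this has to come from the Poincar\'e inequality on $\Omega$ itself, and you only give the straight-slit heuristic for that. Proposition \ref{prop:(1,1)_to_purely_unrect} shows merely that $H$ is purely $1$-unrectifiable, not that it is empty. What you actually need is an LLC-2 type property of $\Omega$, which is essentially the quasidisk property you are trying to prove.

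\textbf{Linearity.} Your final step is sound as a conditional. If $\Omega$ and $\R^2\setminus\Omega$ are both quasiconvex, the simply connected domain $\Omega$ is LLC, hence a quasidisk, and Jones' theorem gives linearity directly. In that case \cite{KMS2010} and Theorem \ref{prop:equiv_of_BV_and W11} are not even needed. Your fallback, however, does not exist. The proof of Theorem \ref{prop:equiv_of_BV_and W11} goes through \cite[Theorem 1.4]{BR21} and produces no linear operator, as the remark closing Section \ref{sec:equiv_BV_W11} points out.

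\textbf{The missing idea.} The paper avoids all of this planar topology by upgrading the hypothesis.
\begin{itemize}
\item By H\"older's inequality, $\Omega$ supports a weak $(1,2)$-Poincar\'e inequality.
\item Proposition \ref{prop:plargerthan1} then makes $\Omega$ a $W^{1,2}$-extension domain.
\item Bounded simply connected planar $W^{1,2}$-extension domains are quasidisks \cite{GR1990}.
\item Jones' theorem \cite{Jones} then gives the linear $W^{1,1}$-extension operator.
\end{itemize}
The paper's alternative route is similar. It uses \cite{KRZ25} at this point to get quasiconvexity of $\R^2\setminus\Omega$, and then \cite{KMS2010}, Theorem \ref{prop:equiv_of_BV_and W11} and \cite{KRZ25'} finish.
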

\begin{proof}
    Since $\Omega$ satisfies a weak $(1,1)$-Poincaré inequality it follows from the Hölder inequality that $\Omega$ satisfies a weak $(1,p)$-Poincaré inequality for every $1< p\leq \infty$. Therefore, by Proposition \ref{prop:plargerthan1}, we know that $\Omega$ is a $W^{1,p}$-extension set for all $1<p\leq \infty$. Now, we present two possible arguments to conclude the proof:
    \begin{enumerate}
        \item By the results in \cite{KRZ25} the complement of $\Omega$ (which is just one component) is quasiconvex. For bounded planar simply connected domains \cite[Theorem 1.1]{KMS2010} gives that $\Omega$ must be a $BV$-extension domain. Finally, we use Theorem \ref{prop:equiv_of_BV_and W11} to conclude that $\Omega$ must be a $W^{1,1}$-extension domain as well. The linearity of this $W^{1,1}$-extension set can be achieved by using \cite{KRZ25'}.
        \item Bounded planar simply connected $W^{1,2}$-extension domains $\Omega$ are known  from \cite{GR1990} to be uniform domains, i.e. quasidisks, and in particular by using Jones' result \cite{Jones} $\Omega$ must be a (linear) $W^{1,p}$-extension domain for every $1\leq p\leq \infty$.
    \end{enumerate}
    Note that the above argument implies that Ahlfors regular bounded planar simply connected domains $\Omega\subset \R^2$ with the weak $(1,1)$-Poincaré inequality must be Jordan uniform domains.    
\end{proof}

Let us next give some ideas behind the proof of the sufficiency in Theorem \ref{prop:equiv_of_BV_and W11}. In the Euclidean setting  \cite[Theorem 1.3]{BR21} states that a bounded domain $\Omega\subset \R^n$ is a $W^{1,1}$-extension set if and only if $\Omega$ has the strong extension property for sets of finite perimeter. This last property means that there exists a constant $C>0$ so that for any set $E\subset\Omega$ of finite perimeter in $\Omega$ there exists a set $\widetilde E\subset\R^n$ satisfying \eqref{eq:intro_1}, \eqref{eq:intro_2} and so that $P(\widetilde E,\partial\Omega)=0$. In a more general setting of open subsets $\Omega\subset X$ of a separable metric measure space $X=(X,\sfd,\mm)$ ($\mm$ a Borel nonnegative measure finite on bounded sets) a similar characterization through the strong extension property of sets of finite perimeter is true whenever $\mm(\partial\Omega)=0$. 
\begin{theorem}{\cite[Theorem 1.5]{CKR23}}\label{prop:W11-chara-strong_ext_per}
     Let $(X,\sfd,\mm)$ be a complete separable metric measure space where $\mm$ is a nonnegative Borel measure finite on bounded sets. Then, an open set  $\Omega\subset X$ with $\mm(\partial \Omega)=0$ is a $W^{1,1}$-extension set if and only if there exists a constant $C>0$ so that for every set $E\subset \Omega$ of finite perimeter in $\Omega$ there exists a set $\widetilde E\subset X$ satisfying (PE1), (PE2) and so that $P_X(\widetilde E,\partial\Omega)=0$.
\end{theorem}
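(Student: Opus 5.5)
The plan is to prove the two implications separately, with the coarea formula for $BV$ functions on metric measure spaces as the main tool. Throughout, $\mm(\partial\Omega)=0$ will be used through one fact: if $v\in W^{1,1}(X)$ then $|Dv|\ll\mm$, so $|Dv|(\partial\Omega)=0$.

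\emph{Necessity.} Let $T$ be a bounded $W^{1,1}$-extension operator and let $E\subset\Omega$ have finite perimeter in $\Omega$.
\begin{enumerate}
\item By the definition of perimeter through Lipschitz approximations, choose locally Lipschitz $u_i$ on $\Omega$ with $0\le u_i\le1$, $u_i\to\chi_E$ in $L^1(\Omega)$ and $\int_\Omega g_{u_i}\,d\mm\to P_E(E,\Omega)$. Here $g_{u_i}$ is the minimal weak upper gradient of $u_i$.
\item Pass to a subsequence with $\|u_i-\chi_E\|_{L^1(\Omega)}\le 4^{-i}\mm(E)$.
\item Build a single function $w\in BV(X)$ with $w=\chi_E$ on $\Omega$, $w=Tu_1$ far from $\overline\Omega$, and $|Dw|(\partial\Omega)=0$. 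To do this, split a neighbourhood of $\partial\Omega$ in $X$ into shells $\{2^{-i-1}\le \dist(\cdot,\partial\Omega)\le 2^{-i}\}$ and use Lipschitz cut-offs $\psi_i$. On the $i$-th outer shell, $w$ interpolates between $Tu_i$ and $Tu_{i+1}$. The gluing errors are of the form $2^{i}\|Tu_i-Tu_{i+1}\|_{L^1}$.
\item Control these gluing errors using the boundedness of $T$ on the differences, choosing the rate of convergence in step 2 fast enough that the errors are summable.
\item Conclude $|Dw|(\partial\Omega)=0$. The only mass that could accumulate on $\partial\Omega$ comes from the tails of this sum, since each $Tu_i\in W^{1,1}(X)$ has $|DTu_i|(\partial\Omega)=0$.
\item Apply the coarea formula $\int_0^1 P_X(\{w>t\},A)\,dt=|Dw|(A)$ with $A=\partial\Omega$ and with $A=X$. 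This gives a level $t\in(0,1)$ such that $\widetilde E=\{w>t\}$ satisfies $\widetilde E\cap\Omega=E$, $P_X(\widetilde E,\partial\Omega)=0$, and (PE2).
\end{enumerate}

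\emph{Sufficiency.} Given $u\in W^{1,1}(\Omega)$, first reduce to $u\ge0$ by splitting into positive and negative parts. By the coarea formula, $E_t=\{u>t\}$ has finite perimeter in $\Omega$ for a.e. $t$, with $\int_0^\infty(\mm(E_t)+P(E_t,\Omega))\,dt\le\|u\|_{W^{1,1}}$.
\begin{enumerate}
\item To avoid measurability issues from extending uncountably many sets independently, discretize: use the levels $t_k=k2^{-j}$ and extend each $E_{t_k}$ to $\widetilde E_{t_k}$ by hypothesis.
\item Replace $\widetilde E_{t_k}$ by $\bigcup_{l\ge k}\widetilde E_{t_l}$ to make the family monotone. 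The perimeter cost of this step needs checking, but the alternative is to sum the $\chi_{\widetilde E_{t_k}}$ directly, which does not need monotonicity.
\item Set $v=2^{-j}\sum_k\chi_{\widetilde E_{t_k}}$ outside $\Omega$ and $v=u$ on $\Omega$.
\item Estimate $|Dv|(X\setminus\overline\Omega)$ by summing the perimeters. The crucial point is $|Dv|(\partial\Omega)=0$, which follows from $P_X(\widetilde E_{t_k},\partial\Omega)=0$ together with the fact that the discretization error inside $\Omega$ is controlled by $u$ itself.
\item At this stage $v\in BV(X)$, $v=u$ on $\Omega$, and $|Dv|$ does not charge $\partial\Omega$. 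It can still have singular parts in $X\setminus\overline\Omega$, so it is not yet in $W^{1,1}(X)$.
\item Regularize $v$ in $X\setminus\overline\Omega$ with a partition of unity subordinate to a Whitney-type cover. The new function $\tilde v$ is locally Lipschitz outside $\overline\Omega$, has total variation there bounded by $C|Dv|(X\setminus\overline\Omega)$, and still does not charge $\partial\Omega$.
\item Conclude that $|D\tilde v|\ll\mm$, and identify $BV$ functions with absolutely continuous variation with $W^{1,1}(X)$.
\end{enumerate}

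I expect the main obstacle to be this last regularization step in the generality stated. No doubling of $\mm$ is assumed, so the usual Whitney and discrete-convolution estimates are not directly available. The identification of $\{v\in BV(X): |Dv|\ll\mm\}$ with $W^{1,1}(X)$ also has to be justified in a general complete separable space. What I would try first is to avoid mollification and instead extend each $E_{t}$ so that $\chi_{\widetilde E_t}$ is itself approximated by Lipschitz functions outside $\overline\Omega$ in energy. I would then assemble $Tu$ from these Lipschitz approximants via the layer construction used in the necessity part. In that construction $|D\cdot|(\partial\Omega)=0$ is again the hypothesis that makes the gluing along $\partial\Omega$ cost nothing.
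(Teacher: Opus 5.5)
The paper only quotes this result from \cite{CKR23} and gives no proof, so I judge your argument on its own. Both directions have a genuine gap, and in both cases it sits exactly at the step meant to produce $|D\cdot|(\partial\Omega)=0$.

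\emph{Necessity.} Steps 3--5 cannot be carried out as stated.
\begin{enumerate}
\item The gluing cost on the $i$-th outer shell is of order $2^{i}\|Tu_i-Tu_{i+1}\|_{L^1}$, and nothing controls it. $T$ is not assumed linear. Even for linear $T$ you only get $\|T(u_i-u_{i+1})\|\le\|T\|\,\|u_i-u_{i+1}\|_{W^{1,1}(\Omega)}$, and the gradient part of that norm does not tend to $0$ in general, because $u_i\to\chi_E$ only strictly, not in norm. Fast $L^1$-convergence of the $u_i$ says nothing about $Tu_i$ outside $\Omega$.
\item The gradients $\rho_{Tu_i}$ are merely bounded in $L^1$, so their mass in the $i$-th shell need not be summable.
\item Step 5 never compares $\chi_E$ inside $\Omega$ with the patchwork outside. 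That interface is precisely where mass on $\partial\Omega$ would come from, and there is no trace theory here to make the two sides match.
\end{enumerate}
The fix is to glue \emph{inside} $\Omega$, where you have control, and to apply $T$ only once. Let $L_k=\{x\in\Omega:2^{-k-1}<\dist(x,X\setminus\Omega)<2^{-k+1}\}$ and take a Lipschitz partition of unity $\{\phi_k\}$ subordinate to these layers. Choose locally Lipschitz $w_k$ on $L_k$ with $\|w_k-\chi_E\|_{L^1(L_k)}\le\varepsilon_k$, where $\sum_k2^k\varepsilon_k<\infty$, and $\int_{L_k}\mathrm{lip}\,w_k\,d\mm\le P_\Omega(E,L_k)+2^{-k}$.
\begin{itemize}
\item Then $w=\sum_k\phi_kw_k\in W^{1,1}(\Omega)$ with norm $\lesssim\mm(E)+P_\Omega(E)$.
\item The zero extension $h$ of $\chi_E-w$ lies in $BV(X)$ with $|Dh|(X\setminus\Omega)=0$. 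The finite partial sums vanish on a neighbourhood of $X\setminus\Omega$, and by lower semicontinuity the tails have total variation $\lesssim\sum_{k>K}\bigl(P_\Omega(E,L_k)+2^{-k}+2^k\varepsilon_k\bigr)\to0$.
\item Now $v=Tw+h$ equals $\chi_E$ on $\Omega$ and $|Dv|(\partial\Omega)\le|DTw|(\partial\Omega)=0$. Your step 6 finishes the proof, and no linearity of $T$ is needed.
\end{itemize}

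\emph{Sufficiency.} The claim in step 4 is false for every fixed $j$. The difference $v-2^{-j}\sum_k\chi_{\widetilde E_{t_k}}$ is the zero extension of $u-2^{-j}\sum_k\chi_{E_{t_k}}$, and this jumps across $\partial\Omega$.
\begin{itemize}
\item For instance, let $\Omega$ be bounded and $u\equiv c\in(0,2^{-j})$. Every $E_{t_k}$ with $k\ge1$ is empty, so $v=c\chi_\Omega$ and $|Dv|(\partial\Omega)=cP_X(\Omega)$. This is positive, possibly infinite, in any nondegenerate situation.
\item Sup-norm smallness of the discretization error is irrelevant. Letting $j\to\infty$ by compactness destroys the property, since weak$^*$ limits may concentrate on $\partial\Omega$.
\item Since $v|_\Omega$ must be exactly $u$, you need strong extensions of a continuum of levels. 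For example, choose $t\mapsto\widetilde E_t$ measurably; a measurable-selection theorem applies to the Borel set of admissible pairs $(t,\chi_{\widetilde E})$ in $\R\times L^1$. Then put $v=\int_0^\infty\chi_{\widetilde E_t}\,dt$, which gives $|Dv|(\partial\Omega)\le\int_0^\infty P_X(\widetilde E_t,\partial\Omega)\,dt=0$ with no monotonicity needed.
\item The regularization in $X\setminus\overline\Omega$ then needs no doubling. Use the same layered construction as above on the open set $X\setminus\overline\Omega$.
\end{itemize}
Your step 7, however, is a real issue and not a formality. In this generality there is no identification of $\{v\in BV(X):|Dv|\ll\mm\}$ with the Newtonian $W^{1,1}(X)$ of this paper; \cite{CKR23} itself distinguishes non-equivalent versions of $W^{1,1}$. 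So either the statement is read for the former space, or you must add an argument that the glued function has an integrable $1$-weak upper gradient along $\mathrm{Mod}_1$-a.e.\ curve crossing $\partial\Omega$.
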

For closed sets, a characterization of the $W^{1,1}$-extension property via the strong extension of sets of finite perimeter does not hold. This is shown in Remark \ref{rem:stron_ext_per_not_possible} using the fat Sierpiński carpets introduced in Theorem \ref{thm:Sierp-W11_ext._char}.

Going back to the Euclidean setting, an application of \cite[Theorem 1.3]{BR21} for the case of planar bounded domains $\Omega\subset\R^2$ gives that (see \cite[Theorem 1.4]{BR21}) if $\Omega$ has the $BV$-extension property then $\Omega$ has the $W^{1,1}$-extension property if and only if the set $$\partial \Omega\setminus \bigcup_{i\in I}\overline \Omega_i$$ is purely $1$-unrectifiable, being $\{\Omega_i\}_{i\in I}$ the open connected components of $\R^2\setminus \overline\Omega$. This characterization, together with Proposition \ref{prop:(1,1)_to_purely_unrect}, allows us to establish Theorem \ref{prop:equiv_of_BV_and W11}.

\subsection{Outline of the paper}

The paper is organized as follows. Section \ref{sec:preliminaries} is dedicated to preliminaries. In Section \ref{sec:Siperpinski carpet} we deal with fat Sierpi\'nski carpets and in particular prove Theorem \ref{thm:Sierp-W11_ext._char}. In this section, we also provide two examples of closed and open sets that are $W^{1,p}$-extension sets for every $p>1$, but which are not $W^{1,1}$-extension sets. These are included in Propositions \ref{prop:W1p-ext-closed-NOT-W11} and \ref{prop:W1p-ext-dom-NOT-W11}.  In the next Section \ref{sec:example} we include Example \ref{ex:method_fails}, which shows that the techniques that work well for the fat Sierpi\'nski carpet case do not work in general. We end with Section \ref{sec:equiv_BV_W11}, which contains the proof of Theorem \ref{prop:equiv_of_BV_and W11}.

\section{Preliminaries}\label{sec:preliminaries}

We denote open balls of $\R^n$ by $B$ or $B(x,r)$ if we want to specify the center $x\in\R^n$ and radius $r>0$. Squares $Q\subset\R^2$ are just sets $Q=[a_1,a_2]\times [b_1,b_2]$, with $a_2-a_1 = b_2-b_1$, which are understood to be closed unless otherwise mentioned. By $Q(x,r)=[x_1-r/2,x_1+r/2]\times [x_2-r/2,[x_2+r/2]$ we denote the square with center $x=(x_1,x_2)\in \R^2$ and side-length $r>0$. The Lebesgue measure on $\R^n$ will be denoted  by $|\cdot|$. The $s$-dimensional Hausdorff measures, $s\geq 0$, are defined as $\mathcal H^s(A)=\lim_{\delta\to 0}\mathcal H^{s}_{\delta}(A),\; A\subset\R^n ,$
where $\mathcal H^{s}_{\delta}$ stands for the $s$-dimensional Hausdorff $\delta$-content of $A$ given by the formula
$$\mathcal H^{s}_{\delta}(A)=\inf\left\{ \sum^{\infty}_{i=1}\diam(U_i)^{s}:\, A\subset \bigcup^{\infty}_{i=1}U_i,\; \diam (U_i)\leq\delta\right\}. $$
Recall that on $\R^n$, the Lebesgue measure $|\cdot|$ and Hausdorff measure $\mathcal H^n$ are equivalent (see \cite{EG2015}). In general, in the rest of the paper, when working with subsets of $\R^n$ we will use the symbol $\Omega$ for open sets and $E$ for closed sets.

For a metric measure space $(X,\sfd,\mm)$, the integral average of an integrable function $u$ over some set $A\subset X$ is denoted by
$$u_A=\dfrac{1}{\mm(A)}\int_A u(x)\, d\mm=\fint_A u(x)\, d\mm. $$
When doing estimations we normally use $C>0$ to denote a constant, that may vary within a chain of inequalities and may depend on some parameters that arise from the context. We sometimes specify the dependence of those parameters by writing $C(\cdot)$ as a function. By $a\lesssim b$ we mean that $a \leq Cb$ for some constant $C \geq  1$. Similarly for $a\gtrsim b$. Then $a \sim b$ means that both $a\lesssim b$ and $a\gtrsim b$
hold.

We next define the space of Sobolev functions $W^{1,p}$ and the space of functions of bounded variation $BV$. In this work, these functions are defined exclusively on either open or closed subsets of $\R^n$. Consequently, we adopt two distinct approaches to introduce these spaces: a classical approach tailored to arbitrary open subsets $\Omega\subset\R^n$, and a more general framework for metric measure spaces $(X, \sfd, \mm) $. The latter can be applied to closed sets $E\subset \R^n$ by viewing $E$ as a metric measure space in its own right, endowed with the Euclidean distance and the restricted $n$-dimensional Lebesgue measure.

\subsection{Sobolev functions}

The Sobolev space $W^{1,p}(\Omega)$, being $\Omega\subset\R^n$ any open set, is defined as those $L^p$ integrable functions $u\colon\Omega\to\R$ whose weak derivatives $\nabla u\colon\Omega\to\R^n$ are also in $L^p$. 

The space, after identifying functions which are equal almost everywhere, is endowed with the norm
$$
\|u\|_{W^{1,1}(\Omega)}=\|u\|_{L^{p}(\Omega)}
+ \|\nabla u\|_{L^p(\Omega)} . $$
We assume %from granted that
the reader to be familiar with %masters 
the basic properties of these spaces (see \cite{EG2015} otherwise).

\medskip

If one wishes to define Sobolev functions on measurable subsets of 
$\R^n$, there now exists a vast body of literature addressing this problem. Most of the existing approaches are equivalent (see \cite{AILD24}) and here we will give the definition proposed by Shanmugalingam in 2000, based on upper gradients and which is referred as the Newtonian-Sobolev space. In her pioneering work \cite{S00} the notation $N^{1,p}$ is used, but here, in order to  unify the notation, we prefer to use $W^{1,p}$.

In what follows let $(X,\sfd,\mm)$ be a  metric measure space and $1\leq p<\infty$. A curve on $X$ is a continuous function $\gamma\colon[a,b]\to X$. Given a family of curves $\Gamma$, the $p$-modulus of $\Gamma$ is defined as
$$\text{Mod}_p(\Gamma)=\inf \left\{\int_{X} \rho(x)^p\, d\mm:\, \rho \text{ is nonnegative Borel function so that } \int_{\gamma}\rho\, ds\geq 1 \text{ for all } \gamma\in\Gamma  \right\}.$$
If a property fails only for a curve family with $p$-modulus zero, we say that it holds for $p$-almost every curve. We recall that integrals of functions $\rho\colon X\to [0,\infty]$ along rectifiable curves $\gamma\colon[a,b]\to X$ are defined as $$\int_{\gamma}\rho\, ds=\int^{\ell(\gamma)}_{0} \rho(\widehat \gamma(t))\, dt.$$
Here $\ell(\gamma)$ denotes the length of $\gamma$, that is,
\begin{equation}
    \label{eq:rectifiable}
    \ell( \gamma )
    =
    \sup¤%_{ \left\{ t_{i} \right\}_{ i = 1 }^{N+1} }
    \sum_{ i = 1 }^{ N } \dist( \gamma(t_i), \gamma( t_{i+1} ) ),
\end{equation}
where the supremum is taken over finite partitions $t_1 = a,$ $t_{i} \leq t_{i+1}$, $t_{N+1} = b$. And the function $\widehat{\gamma}:[0,\ell(\gamma)]\to X$ is the arc-length parametrization of $\gamma$, that is, the unique continuous map for which $\widehat{\gamma}( \ell( \gamma|_{ \left[a,t\right] } )) = \gamma( t )$; see \cite[Eq. (5.1.6)]{HKST15}.  Given a function $u\colon X\to\R$ we say that $\rho\colon X\to [0,\infty]$ is a $p$-weak upper gradient if $\rho$ is Borel and for $p$-almost every curve $\gamma\colon [0,1]\to X$ we have
$$|u(\gamma(1))-u(\gamma(0))|\leq \int_{\gamma}\rho\, ds. $$

\begin{definition}[$W^{1,p}$ spaces] For a measurable function $u\colon X\to\R$ let
$$ \|u\|_{ W^{1,p}(X)}=\|u\|_{L^p(X)}+\inf\left\{\|\rho\|_{L^p(X)}:\, \rho \;\text{is an}\; L^p\text{-integrable}\; p\text{-weak upper gradient of $u$} \right\}.$$
The Sobolev space $W^{1,p}(X)$ is defined as a quotient space
$$W^{1,p}(X)=\left\{u: \|u\|_{W^{1,p}(X)}<\infty
 \right\}/\sim ,\quad \quad \text{where $u\sim v$ if and only if $\|u-v\|_{W^{1,p}(X)}=0$.}$$

\end{definition}

It is well known (see \cite[Theorem 6.3.20]{HKST15}) that if a function $u$ has an $L^p$-integrable $p$-weak upper gradient then there exists a minimal $p$-weak upper gradient $\rho_u$. This means that $\rho_u$ is itself an $L^p$-integrable $p$-weak upper gradient and satisfies  $\rho_u\leq \rho$ almost everywhere for any other such gradient. We may therefore write
$$\|u\|_{W^{1,p}(X)}=\|u\|_{L^p(X)}+\|\rho_u\|_{L^p(X)}.$$
 We also define the homogenoeus Sobolev space $L^{1,p}(X)=\left\{u\in L^{1}_{loc}(X):\, \|\rho_u\|_{L^p(X)}<\infty\right\}$ together with the seminorm $\|\rho_u\|_{L^p(X)}$. 

We point out that the spaces $W^{1,1}(X)$ and $L^{1,1}(X)$ coincide with the classical definitions of Sobolev functions, by means of weak derivatives, whenever $X$ is an open subset of $\R^n$, setting $|\nabla u|=\rho_u$. For more information about Sobolev functions on metric measure spaces we refer to \cite{S00,BB11,HKST15,AILD24}.

\subsection{$BV$-functions}

For open sets $\Omega\subset\R^n$ the  definition of the space of bounded variation $BV(\Omega)$ is given as the set of all integrable functions $u\in L^1(\Omega)$ whose total variation
\begin{equation}\label{eq:def_BV_1}
 \|Du\|(\Omega)=\sup \left\{\int_{\Omega} u \,\text{div}(v)\, dx:\, v\in C^{\infty}_{0}(\Omega),\, |v|\leq 1  \right\} 
 \end{equation}
is finite. The norm of $u\in BV(\Omega)$ is given by $\|u\|_{L^1(\Omega)}+\|Du\|(\Omega)$. It is well known that $\|Du\|(\cdot)$ defines a Radon measure over all measurable subsets of $F \subset\Omega$ through the formula 
$$\|Du\|(F)=\inf\left\{ \|Du\|(U):\, F\subset U\subset \Omega,\, U \, \text{open}\right\} .$$
A measurable set $F\subset \Omega$ is said to have finite perimeter on $\Omega$ if $\|D\chi_F\|(\Omega)<\infty $. In this case, we define the perimeter of $F$ in $\Omega$ as $P(F,\Omega) = \|D\chi_F\|(\Omega)$; otherwise, we set $P(F,\Omega) = \infty$. For sets of finite perimeter $F\subset \Omega$ and for a given measurable subset $A\subset \Omega$ we can also understand the perimeter of $F$ on $A$ as the value $P(F,A)=\|D\chi_F\|(A) $. Thanks to De Giorgi \cite{Giorgi} and Federer  it is nowadays a standard fact that if $F\subset \Omega$ has finite perimeter on $\Omega$ then for every measurable subset $A\subset \Omega$ we have
\begin{equation}\label{eq:Per-H^n-1}
P(F,A)\sim\mathcal H^{n-1}(\partial^M F\cap A).
\end{equation}
Here $\partial^M F$ denotes the  measure theoretic boundary  of  $F$  (also called essential boundary), that is 
$$\partial^M F=\left\{x\in \R^n:\, \limsup_{r\to 0}\dfrac{|F\cap B(x,r)|}{|B(x,r)|}>0\;\text{and}\;\limsup_{r\to 0}\dfrac{|F\setminus B(x,r)|}{|B(x,r)|}>0\right\}.$$

There exist alternative, but equivalent, definitions of the space of $BV$ functions that are more convenient when working with arbitrary measurable subsets of $\R^n$.
Following the approach of \cite{A-DM14}, for a metric measure space $(X,\sfd,\mm)$, for a function $u\in L^{1}_{loc}(X)$ and for an open subset $U\subset X$ we define
\begin{equation}\label{eq:def_BV_2} 
|Du|(U)= \inf\left\{ \liminf_{n\to\infty}\int_{U}  \text{lip} f_n\, d\mm:\, f_n\in \Lip_{\text{loc}}(U),\, \|f_n- u\|_{L^{1}_{loc}(U)}\to 0\right\}. 
\end{equation}
 Here $\Lip_{\text{loc}}(U)$ denotes the space of locally Lipschitz functions on $U$, which are those functions $f\colon U\to\R$, for which the pointwise Lipschitz constant (also called slope) is finite everywhere on every accumulation point $x\in U$, that is,
 
$$\text{lip}f(x)=\limsup_{y\to x} \dfrac{|f(y)-f(x)|}{|y-x|}<\infty.$$
The space of Lipschitz functions over a set $S\subset X$ is denoted by $\Lip(S)$.
We can also extend $|Du|$ to all Borel subsets $B\subset X$, hence defining a Borel measure \cite[Theorem 3.4]{Mir03}, by letting
$$|Du|(B)=\inf \{ |Du|(U):\, B\subset U\subset X,\; U \,\text{open} \} .$$
For a Borel set $B\subset X$ and $u\in L^{1}_{\loc}(B)$ we denote by $|Du|_B$ the total variation when computed in the metric measure space $(X,\sfd,\mm|_B)$.

 \begin{definition}[$BV$ spaces]
   For a Borel set $B\subset X$, the space $BV(B)$ is the set of those functions $u\in L^1(B)$ for which their total variation $|Du|_B(B)$ is finite. We endow the space with the norm
     $$\|u\|_{BV(B)}=\|u\|_{L^1(B)}+|Du|_B(B) ,$$
     and as usual we identify functions $u,v\in BV(B)$ so that $\|u-v\|_{BV(B)}=0$.
    We also define the homogeneous space $$\overset{\circ}{BV}(B)=\{u\in L^{1}_{\loc}(B):\, |Du|_{B}(B)<\infty\} $$
     together with the seminorm $|Du|_{B}(B)$.
 \end{definition}
We always have that $W^{1,1}(X)\subset BV(X)$  with $\|\rho_u\|_{L^1(X)}\sim |Du|_X(X)$ for every $u\in W^{1,1}(X)$; see \cite[Section 8]{A-DM14}. 

In the Euclidean situation of $X=\R^n$ with the Euclidean distance and the $n$-dimensional Lebesgue measure, we note that whenever $E\subset X$ is closed, we have $BV (X,\sfd,\mm|_E) =BV(E,\sfd|_{E\times E},\mm|_E)$. Moreover, for an open set $\Omega\subset X= \R^n$ and $u\in L^{1}_{\loc}(\Omega)$ it can be proved that (see \cite{AFP2000,Mir03,A-DM14})  \begin{equation}\label{eq:Variation_equality}
\|Du\|(U)=|Du|_\Omega(U) =|Du|(U) \end{equation} 
for every open subset $U\subset\Omega$. Consequently, the resulting $BV(\Omega)$ spaces are identical regardless of whether \eqref{eq:def_BV_1} or \eqref{eq:def_BV_2} is used to define the total variation.

In the context of metric measure spaces $X$, we define sets of finite perimeter as follows. 

\begin{definition}[Sets of finite perimeter]
Let $E\subset X$ be a Borel subset of a metric measure space. A  Borel set $F\subset E$ has finite perimeter on $E$ if $|D \chi_F|_E(E)<\infty$, writing in this case $P_E(F)=|D \chi_F|_E(E)$ and $P_E(F)=\infty$ otherwise. Moreover, for any Borel subset $B\subset E$ we denote $P_E(F,B)=|D\chi_F|_E(B)$. 
\end{definition}
Observe that, from \eqref{eq:Variation_equality}, for open subsets $\Omega\subset\R^n$ we have $P_\Omega(F)=P(F,\Omega)$ for every Borel subset $F\subset\Omega$ (recall that $P(F,\Omega)=\|D\chi_F\|(\Omega)$ should be understood in the sense of \eqref{eq:def_BV_1}).  Similarly, if $B\subset\Omega$ is another Borel set we also have $P_\Omega(F,B)=P(F,B)=\|D\chi_F\|( B)$.

\begin{remark}\label{rem:stron_ext_per_not_possible} We show here that Proposition \ref{prop:W11-chara-strong_ext_per} does not hold for closed sets. The fat Sierpi\'nski carpets $S\subset \R^2$ considered in Theorem \ref{thm:Sierp-W11_ext._char} are closed $W^{1,1}$-extension sets which do not satisfy the strong extension property for sets of finite perimeter. Indeed, for these sets we have $S=\partial S$ and $|S|>0$. Next, take a set $F\subset S$ of finite perimeter on $E$ with $P_S(F)>0$, and consider any extension $\widetilde F$ for which (PE1) holds. In that case we have

   $$ P_{\R^2}(\widetilde F, \partial S)=P_{\R^2}(\widetilde F, S)=\left|D \chi_{\widetilde F}\right|_{\R^2}(S)\geq \left|D\chi_{\widetilde F\cap S}\right|_S(S)=P_S(\widetilde F\cap S).$$
The inequality $|D \chi_{\widetilde F}|_{\R^2}(S)\geq |D\chi_{\widetilde F\cap S}|_S(S) $ follows because, since $S$ is closed, globally Lipschitz functions can be used instead of locally Lipschitz ones to compute the total variation. Next, since (PE1) gives that $|( F\setminus (\widetilde F\cap E))\cup((\widetilde F\cap E)\setminus F)|=0 $, by using  \cite[Proposition 2.8]{CKR23} we get 
$ P_S(\widetilde F\cap S)=P_S(F)$ and hence
   $$P_{\R^2}(\widetilde F, \partial S)\geq P_S(\widetilde F\cap S) = P_S(F)>0.$$
\end{remark}
We refer the interested reader to \cite{AFP2000,Mir03,A-DM14,Panu} for more information about $BV$-functions. For the purposes of this paper, we note that although $W^{1,1}(X)$ and $BV(X)$ have been defined for general metric measure spaces, in what follows we shall only consider $X$  as a closed or open subset of $\R^n$, with the Euclidean distance and the restricted $n$-dimensional Lebesgue measure.

\subsection{Poincaré inequality}

\begin{definition}\label{def:poincare}
    A metric measure space $(X,\sfd,\mm)$ is said to satisfy a weak $(1,p)$-Poincaré inequality for some $1\leq p<\infty$ if there exist constants $C>0$, $\lambda\geq 1$ so that
    \begin{equation}\label{eq:(1,p)-Poinc-ineq.}
    %\dfrac{1}{|B\cap X|}
    \fint_{B}|u(x)-u_{B}|\, d\mm \leq C \diam (B)\left(%\dfrac{1}{| \lambda B \cap X|}
    \fint_{\lambda B}\rho(x)^p\, d\mm\right)^{1/p} 
    \end{equation}
    for every ball $B\subset X$, every integrable function $u\colon\lambda B\to \R$ and every $L^p$-integrable $p$-weak upper gradient $\rho\colon\lambda B\to [0,\infty]$ of $u$. 
\end{definition}
In the case that $X$ satisfies a weak $(1,1)$-Poincaré inequality we also get that there exists $C>0$ and $\lambda \geq 1$ so that 
\begin{equation}\label{eq:BV-PI}
  \fint_{B} |u(x)-u_{B}|\, d\mm\leq C\diam (B) \dfrac{|D u|(\lambda B)}{\mm(\lambda B)}
\end{equation}
for every ball $B$ centered at $X$ and every $u\in L^{1}_{\loc}(X)$. This fact follows by applying \cite[Theorem 4.21]{BB11} and the weak $(1,1)$-Poincaré inequality to approximating Lipschitz functions in the definition of the total variation. Note that for locally Lipschitz functions $f\colon X\to\R$, the slope $\text{lip}f(x)$ acts as a $1$-weak upper gradient.

The next lemma is a straightforward consequence of the weak $(1,1)$-Poincaré inequality (a similar statement can be found in \cite[Theorem 4.5]{Mir03}). This result will be used in the proof of Proposition \ref{prop:(1,1)_to_purely_unrect}. We provide details of the proof for the sake of completeness.

\begin{lemma}\label{lem:Iso_per.}
    Let $\Omega\subset\R^n$ be an open set that satisfies a weak $(1,1)$-Poincaré inequality with constants $(C_p,\lambda_p)$. Also let $x\in \Omega$, some radius $r>0$ and some measurable set $F\subset \Omega$ with finite perimeter and so that 
    $$|B(x,r)\cap F|\geq \delta r^n\quad \text{and}\quad |B(x, r)\cap (\Omega\setminus F)|\geq \delta r^n $$
    for some constant $\delta\in (0,1)$. Then, there exists a constant $C=C(C_p)>0$ such that
    $$P(F, B(x,\lambda_p r)\cap \Omega)\geq C \delta r^{n-1} .$$
\end{lemma}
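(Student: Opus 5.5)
We will apply the $BV$-version \eqref{eq:BV-PI} of the weak $(1,1)$-Poincaré inequality to $u=\chi_F$ on the ball $B=B(x,r)\cap\Omega$ of the metric measure space $\Omega$. Since $F$ has finite perimeter, $\chi_F\in L^1_{\loc}(\Omega)$ with $|D\chi_F|_\Omega(\lambda_p B)=P(F,B(x,\lambda_p r)\cap\Omega)$, by \eqref{eq:Variation_equality} and the identification $P_\Omega(F,\cdot)=P(F,\cdot)$ on Borel subsets of $\Omega$. Hence \eqref{eq:BV-PI} gives
\[
\fint_{B}|\chi_F-(\chi_F)_B|\,dx \le C_p\, 2r\,\frac{P(F,B(x,\lambda_p r)\cap\Omega)}{|B(x,\lambda_p r)\cap\Omega|}.
\]

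Next we bound the left-hand side from below. Write $a=|B\cap F|$, $b=|B\cap(\Omega\setminus F)|$ and $m=|B|=a+b$. Then $(\chi_F)_B=a/m$, and a direct computation gives
\[
\int_B|\chi_F-(\chi_F)_B|\,dx=a\frac{b}{m}+b\frac{a}{m}=\frac{2ab}{m}\ge \min\{a,b\}\ge \delta r^n .
\]
Therefore $\fint_B|\chi_F-(\chi_F)_B|\ge \delta r^n/m$. Combining this with the Poincaré estimate yields
\[
P(F,B(x,\lambda_p r)\cap\Omega)\ \ge\ \frac{\delta r^n}{2C_p r}\cdot\frac{|B(x,\lambda_p r)\cap\Omega|}{|B(x,r)\cap\Omega|}\ \ge\ \frac{\delta r^{n-1}}{2C_p},
\]
where the last step uses only the monotonicity $|B(x,\lambda_p r)\cap\Omega|\ge|B(x,r)\cap\Omega|$, valid because $\lambda_p\ge1$. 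So no doubling or Ahlfors regularity is needed, and the constant $C=1/(2C_p)$ depends only on $C_p$. Note also that $\diam(B)\le 2r$, which is the direction we need.

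The main point requiring care is justifying \eqref{eq:BV-PI} for $\chi_F$ with the correct identification of the total variation measure on the open set $B(x,\lambda_p r)\cap\Omega$. This holds because $\chi_F\in L^1_{\loc}(\Omega)$ and \eqref{eq:BV-PI} is stated for all such functions, while $|D\chi_F|_\Omega$ on the open subset coincides with $P(F,\cdot)$ by \eqref{eq:Variation_equality}. If one prefers to avoid \eqref{eq:BV-PI}, one can apply the $(1,1)$-Poincaré inequality to locally Lipschitz approximants $f_k\to\chi_F$ in $L^1_{\loc}$ with $\int \mathrm{lip} f_k$ converging to the perimeter on a slightly larger open ball, and pass to the limit using $L^1$ convergence on $B$.
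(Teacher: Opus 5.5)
Your proposal is correct and follows essentially the same route as the paper: apply \eqref{eq:BV-PI} to $\chi_F$ on $B(x,r)\cap\Omega$, use $|B(x,\lambda_p r)\cap\Omega|\ge|B(x,r)\cap\Omega|$, and bound the mean oscillation of $\chi_F$ from below by a multiple of $\delta r^n$. The only difference is cosmetic. You compute $\int_B|\chi_F-(\chi_F)_B|=2ab/m\ge\min\{a,b\}$ exactly, whereas the paper uses the cruder bound $\delta r^n\max\{u_B,1-u_B\}\ge\delta r^n/2$; both give a constant of order $1/C_p$.
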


\begin{proof}
    We start by letting $u=\chi_F\in BV(\Omega)$. Then by the weak $(1,1)$-Poincaré inequality \eqref{eq:BV-PI}
    \begin{equation}\label{eq:lem_key_tool}
    \fint_{B(x,r)\cap \Omega} |u(y)-u_{B(x,r)\cap \Omega}|\, dy\leq C_p r \dfrac{\|Du\|(B(x,\lambda_p r)\cap \Omega)}{|B(x,\lambda_p r)\cap \Omega|}
    \end{equation}
Observe that $\|Du\|(B(x,\lambda_p r)\cap \Omega)=P(F,B(x,\lambda_p r)\cap \Omega)$. Moreover since $\lambda_p\geq 1$ we have $|B(x,\lambda_p r)\cap \Omega|\geq |B(x,r)\cap \Omega|$. Therefore,  inequality \eqref{eq:lem_key_tool} yields 
\begin{equation}\label{eq:lem_key_tool_2}
           \int_{B(x,r)\cap \Omega} |u(y)-u_{B(x,r)\cap \Omega}|\, dy\leq C_p r P(F,B(x,\lambda_p r)\cap \Omega).
\end{equation}
We will now show that the left-hand side term can be bounded from below by some constant times the $n$-th power of the radius $r$. Let us explain this. First
$$u_{B(x,r)\cap \Omega}=\fint_{B(x,r)\cap \Omega}u(y)\, dy=\dfrac{|B(x,r)\cap F|}{|B(x,r)\cap \Omega|}\in (0,1] ,$$
so, noting that we have $u=1$ on $B(x,r)\cap F $ and $u=0$ on $B(x,r)\cap (\Omega\setminus F)$, we can write
\begin{align}
    \int_{B(x,r)\cap \Omega} |u(y)-u_{B(x,r)\cap \Omega}|\, dy&\geq \max\left\{ \int_{B(x,r)\cap F} u_{B(x,r)\cap \Omega},\int_{B(x,r)\cap (\Omega\setminus F)}\left|1-u_{B(x,r)\cap \Omega}\right|  \right\}\\
&\geq \delta r^n\max \left\{ u_{B(x,r)\cap \Omega} , \left|1-u_{B(x,r)\cap \Omega}\right|  \right\}\geq \dfrac{\delta}{2}r^n.
\end{align}
Finally, going back to \eqref{eq:lem_key_tool_2} we have
$$ \dfrac{\delta}{2}r^n \leq C_p r P(F,B(x,\lambda_p r)\cap \Omega)$$
and hence we conclude
$$P(F,B(x,\lambda_p r)\cap \Omega)\geq C(C_p)\delta r^{n-1} .$$
    
\end{proof}

\subsection{Extension sets}

\begin{definition}
Given a metric measure space $X$ and a Borel set $E\subset X$, we say that $E$ has the $W^{1,p}$-extension property if there exists an extension operator $T\colon W^{1,p}(E)\to W^{1,p}(X)$ and a constant $C>0$ such that $Tu|_E=u$ and $\|Tu\|_{W^{1,p}(X)}\leq C\|u\|_{W^{1,p}(E)}$ for all $u\in W^{1,p}(E)$. 
\end{definition}
This definition extends naturally to $L^{1,p}$, $BV$, and $\overset{\circ}{BV}$ by substituting the $W^{1,p}$ norm with the appropriate norm or seminorm for each space.

For the Euclidean versions of the next result (where the converses also hold) we refer to \cite{Koskela} (or \cite[Theorem 4.4]{HerronKoskela}) and \cite[Lemma 2.1]{KMS2010}.
\begin{lemma}\label{lem:hom-full-norm}
 Let $(X,\sfd,\mm)$ be an Ahlfors regular metric measure space that supports a weak $(1,1)$-Poincaré inequality, and let $E\subset X$ be a bounded Borel set.
    \begin{enumerate}
        \item[(a)] If $E$ is a $L^{1,1}$-extension set, then it is a $W^{1,1}$-extension set.
        \item[(b)] If $E$ is a $\overset{\circ}{BV}$-extension set, then it is a $BV$-extension set.
    \end{enumerate}
\end{lemma}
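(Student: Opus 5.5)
Given an extension operator $T$ for the homogeneous space, I will build one for the full-norm space by the standard cutoff-and-mean argument. The Poincaré inequality on $X$ (rather than on $E$) controls the constants, and Ahlfors regularity together with boundedness of $E$ keeps all scales comparable to $\diam(E)$.

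\textbf{Setup.} Let $T\colon L^{1,1}(E)\to L^{1,1}(X)$ satisfy $\|\rho_{Tu}\|_{L^1(X)}\le C\|\rho_u\|_{L^1(E)}$. The $BV$ case (b) is identical, with $|D\cdot|$ in place of $\|\rho_\cdot\|_{L^1}$. Fix a ball $B_0=B(x_0,R)$ with $x_0\in E$ and $E\subset B_0$, where $R\sim\diam(E)$. If $\diam E=0$, then $\mm(E)=0$ and the claim is trivial. Fix a Lipschitz cutoff $\eta$ with $\eta=1$ on $B_0$, $\operatorname{supp}\eta\subset 2B_0$ and $\Lip\eta\le 1/R$. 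For $u\in W^{1,1}(E)$, define
\[
Su=\eta\,\bigl(Tu-c_u\bigr)+c_u\,\eta, \qquad c_u=(Tu)_{\lambda\cdot 2B_0},
\]
where $\lambda$ is the Poincaré dilation constant. On $E\subset B_0$ we have $Su=Tu=u$. If $\mm(E)>0$, we may instead take $c_u=(Tu)_E=u_E$ to make the bound more transparent.

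\textbf{Estimates.} Write $v=Tu-c$ with $c=u_E$. Then $Su=\eta v+c\eta$, and by the Leibniz rule its gradient is bounded by
\[
\eta\rho_{Tu}+\frac{|v|}{R}\chi_{2B_0}+\frac{|c|}{R}\chi_{2B_0}.
\]
Therefore
\[
\|Su\|_{W^{1,1}}\lesssim \|\rho_{Tu}\|_{L^1}+\Bigl(1+\frac1R\Bigr)\int_{2B_0}|Tu-c|\,d\mm+\Bigl(1+\frac1R\Bigr)|c|\,\mm(2B_0).
\]
The last term is controlled by $|c|\,\mm(2B_0)\le \frac{\mm(2B_0)}{\mm(E)}\|u\|_{L^1(E)}$. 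For the middle term, the weak $(1,1)$-Poincaré inequality on $X$ for the ball $2B_0$ gives
\[
\int_{2B_0}|Tu-(Tu)_{2B_0}|\,d\mm\lesssim R\,\|\rho_{Tu}\|_{L^1(X)}.
\]
It remains to replace $(Tu)_{2B_0}$ by $u_E$. We have
\[
|(Tu)_{2B_0}-u_E|\le \frac{1}{\mm(E)}\int_E|Tu-(Tu)_{2B_0}|\,d\mm\le \frac{1}{\mm(E)}\int_{2B_0}|Tu-(Tu)_{2B_0}|\,d\mm\lesssim \frac{R}{\mm(E)}\|\rho_{Tu}\|_{L^1}.
\]
Combining these, $\|Su\|_{W^{1,1}(X)}\le C(R,\mm(E),\mm(2B_0))\bigl(\|\rho_u\|_{L^1(E)}+\|u\|_{L^1(E)}\bigr)$. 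If $T$ is linear, so is $S$.

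\textbf{Main obstacle.} The step that needs care is the requirement $\mm(E)>0$. I expect it to be the main point because a set of positive measure is needed to pin down the constant. If $\mm(E)=0$, then $W^{1,1}(E)$ is trivial modulo the identification $u\sim v$, so there is nothing to extend. A secondary check is that the product rule $\rho_{\eta v}\le \eta\rho_v+|v|\Lip\eta$ holds for $1$-weak upper gradients. This is standard (see \cite{BB11}), and for $BV$ one instead uses $|D(\eta v)|\le \eta|Dv|+|v|\Lip\eta\,\mm$. Ahlfors regularity only enters to guarantee $\mm(2B_0)\lesssim R^s<\infty$, so every constant depends only on $E$ and the data of $X$.
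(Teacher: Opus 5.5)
Your proof is correct, but it takes a more elementary route than the paper.

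Both arguments share the same core estimate. The weak $(1,1)$-Poincaré inequality of $X$ on a ball $B\supset E$ bounds $\int_B|Tu-(Tu)_B|\,d\mm$ by a multiple of $\diam(B)\,\|\rho_{Tu}\|_{L^1(X)}$. Since $Tu=u$ on $E$, you also have $|(Tu)_B-u_E|\le \mm(E)^{-1}\int_B|Tu-(Tu)_B|\,d\mm$. Together these control $\|Tu\|_{L^1(B)}$ by $\|u\|_{W^{1,1}(E)}$.

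Where you differ is in passing from this bound on a ball to a function in $W^{1,1}(X)$. The paper uses \cite{Rajala2021} to find a uniform domain $\Omega$ with $B(x,r)\subset\Omega\subset B(x,r+1)$. It bounds $\|Tu\|_{W^{1,1}(\Omega)}$ and then composes with the $W^{1,1}$-extension operator for uniform domains from \cite{Bjo:Sha:07}. You instead multiply by a Lipschitz cutoff and apply the Leibniz rule for $1$-weak upper gradients.

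Your $Su$ is exactly $\eta\,Tu$: the constant $c_u$ cancels and only serves as bookkeeping in the estimates. That is why linearity of $S$ follows at once from linearity of $T$.

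Your route avoids two nontrivial external results and the structure they require. It uses Ahlfors regularity only to get $\mm(2B_0)<\infty$ and to dispose of the case $\mm(E)=0$. It also proves (b) by the same computation, using the $BV$ Leibniz inequality and \eqref{eq:BV-PI}, whereas the paper defers (b) to \cite{CKLR26}. The paper's route, in exchange, needs no product rule: it only bounds $Tu$ on a set already known to be a $W^{1,1}$-extension domain.
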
 \begin{proof}
    The proof of $(b)$ appears in \cite[Proposition 4.5]{CKLR26}. Let us show $(a)$, which may be a well-known fact for most experts in the area, but it seems that a proof is missing in the literature. We follow the ideas of \cite{HerronKoskela} and \cite{CKLR26}.

Let $T\colon L^{1,1}(E)\to L^{1,1}(X)$ be the extension operator, and let $B(x,r)$ be a ball such that $E\subset B(x,r)$, where $r=\text{diam}(E)$. Next, use \cite{Rajala2021} to find a uniform set $B(x,r)\subset \Omega \subset B(x,r+1)$, which we know from \cite[Proposition 5.9]{Bjo:Sha:07} that is a $W^{1,1}$-extension set. Therefore, it is enough to show that $T\colon W^{1,1}(E) \to W^{1,1}(\Omega)$ is also an extension operator. Call $B:=B(x,r+1)$ and let $u\in W^{1,1}(E)$. We know that $Tu\in L^{1,1}(X)$ so it is clear that $Tu\in L^{1,1}(B)$. By the weak $(1,1)$-Poincaré inequality  \eqref{eq:(1,p)-Poinc-ineq.}, and due to the Alhfors regularity, we can write
\begin{align*}
    \int_{B}|Tu-(Tu)_{B}|d\mm &\leq C \text{diam}(B)\int_{\lambda B} \rho(x)\, d\mm. 
\end{align*}
Here, $\rho$ denotes the minimal upper gradient of $Tu$ on $X$, and note that the constant $C$ depends on  the Poincaré constant $\lambda$ and on the Alhfors regularity constant. Therefore, since $Tu|_E=u$ we have
\begin{align*}
    \fint_{B}|Tu-u_{E}|\, d\mm &\leq \fint_{B}|Tu-(Tu)_{B}|\, d\mm+ \mm(B)|(Tu)_{B}-u_{E}|   \\
    &= \fint_{B}|Tu-(Tu)_{B}|d\mm + \left|\dfrac{\mm(B)}{\mm(E)}\int_E (Tu-(Tu)_{B})\, d\mm\right|\\
    &\leq 2\dfrac{\mm(B)}{\mm(E)}\int_B |Tu-(Tu)_{B}|\, d\mm \leq 2C\dfrac{\mm(B)\text{diam(B)}}{\mm(E)}\int_B \rho_u\, d\mm\leq C' \int_B \rho\, d\mm.
\end{align*}
The constant $C'$ depends on $C$, on $\diam(E)$, on $\mm(E)$ and on the Alhfors regularity exponent $s$ (recall also that $\diam (B)\leq 2\diam (E)+2$). This leads to the estimate
\begin{align*}
   \int_\Omega |Tu|\,d\mm \leq \int_B |Tu|\, d\mm \leq \int_B |Tu-u_E|\, d\mm+\int_B |u_E|\, d\mm\leq C'\int_B \rho\, d\mm+ \mm(B) \, |u_E|.
\end{align*}
We conclude that for some constant $C''\geq 1$
\begin{align*}
 \|Tu\|_{W^{1,1}(\Omega)}&\leq  \int_\Omega |Tu|\,d\mm + \int_B \rho\, d\mm \leq (C'+1)\int_B\rho\,d\mm+\mm(B)|u_E|\\
 & \leq (C'+1)\int_X\rho\,d\mm+\dfrac{\mm(B)}{\mm(E)}\int_E |u|\, d\mm \\
 &\leq \left( (C'+1)\|T\|+ \dfrac{\mm(B)}{\mm(E)} \right)\|u\|_{W^{1,1}(E)}\leq C'' \|u\|_{W^{1,1}(E)}.
\end{align*}

\end{proof}

\subsection{Other geometric notions}

\begin{definition}[Quasiconvexity]
    A set $X\subset \R^n$ is quasiconvex (or $C$-quasiconvex) if there is $C\geq 1$ so that every pair of points $x,y\in X$ can be joined by a rectifiable curve $\gamma\subset X$ such that $\ell(\gamma)\leq C|x-y|$.
\end{definition}

\begin{definition}[Uniform sets]\label{def:unif}
\hspace{1cm}
    \begin{enumerate}
        \item (Martio and Sarvas, \cite{MS78}). A domain $\Omega \subset \R^n$ is uniform (or $C$-uniform) if there is $C\geq 1$  so that for every $x,y\in\Omega$ there exists a rectifiable curve $\gamma\subset \Omega$ connecting them such that
        $$ \ell(\gamma)\leq C |x-y| \quad \text{and} \quad \dist(z,\partial \Omega)\geq C^{-1}\min(\ell(\gamma_{x,z}),\ell(\gamma_{y,z}) )\;\text{for all}\; z\in\gamma.$$
        Here $\gamma_{x,y},\gamma_{y,z}\subset \gamma$ are the subcurves of $\gamma$ joining $x$ with $y$ and $y$ with $z$ respectively.
        A planar domain $\Omega\subset\R^2$ that is uniform in the previous sense is a quasidisk (see \cite{Gehring87}).
\item (Eriksson-Bique, \cite[Definition 2.7]{EG22}). A closed set $E\subset \R^n$ is uniform (or $C$-uniform) if there is $C\geq 1$ so that for every $x,y\in E$ there exists a  curve $\gamma\subset E$ connecting them such that
        \begin{equation}\label{eq:uniform_1}
    \diam(\gamma)\leq C |x-y| \end{equation} 
        and  for all $z\in\gamma$
 \begin{equation}\label{eq:uniform_2}   
        \dist(z,\partial E)\geq C^{-1}\min(\diam(\gamma_{x,z}),\diam(\gamma_{y,z}) ).
        \end{equation}
We remark that if $E$ is Ahlfors regular and $E=\overline{\text{int}(E)}$, $E$ is uniform if and only if $\text{int}(E)$ is uniform in the sense of Martio and Sarvas. Moreover, in the case that $E$ is  quasiconvex we have $\ell(\gamma)\sim \diam (\gamma)$ and we could write lengths instead of diameters in \eqref{eq:uniform_2}.
    \end{enumerate}
\end{definition}

\begin{definition}[Purely unrectifiable sets]
    A set $H\subset \R^n$ is called purely $(n-1)$-unrectifiable if for every Lipschitz function $f\colon\R^{n-1}\to \R^n$ we have that $\mathcal H^1(f(\R^{n-1})\cap H)=0$. 
\end{definition}

\section{Fat Sierpi\'nski carpets and Sobolev extensions}\label{sec:Siperpinski carpet}

In \cite{Sierpinski} it was shown that fat enough Sierpi\'nski carpets 
 support a weak $(1,1)$-Poincar\'e inequality. We will verify that such carpets are also $W^{1,1}$-extension sets. Notice that for $p>1$, assuming the Sierpi\'nski carpet supports a weak $(1,p)$-Poincaré inequality, it would be easier to conclude that the carpet is a $W^{1,p}$-extension set thanks to Proposition \ref{prop:plargerthan1}. In the case $p=1$, we cannot use a simple maximal operator argument for an abstract Whitney-type extension. Instead, we will construct an explicit Whitney-type extension operator.

 \medskip

 Let us recall first the construction of a fat Sierpi\'nski carpet. We mostly follow the notation of \cite{Sierpinski}.
 The Sierpi\'nski carpets are parametrized by a sequence $\mathbf{a} = (a_1,a_2,\dots)$ of reciprocals of odd integers that are at least three. We start the iterative construction by defining $T_0 =[0,1]^2$ and $\mathcal S_{\mathbf a, 0} = T_0$. Next we cover $\mathcal S_{\mathbf a, 0}$ by $a_1^{-2}$ essentially disjoint squares of side-length $a_1$. The  open central  square inside $S_{\mathbf a, 0}$ of side-length $a_1$ is called $\mathcal R_{\mathbf a,1}$, which will be removed. The collection of the remaining squares (excluding the central one) is denoted  by $\mathcal T_{\mathbf a,1}$. And the union of all the squares in $\mathcal T_{\mathbf a,1}$ is denoted by $\mathcal S_{\mathbf a,1}$. Each $Q \in \mathcal T_{\mathbf a,1}$ is covered by $a_2^{-2}$ essentially disjoint squares of side-length $a_1a_2$ and the interior of the central one of those is deleted. The collection of all the squares obtained this way is denoted by $\mathcal T_{\mathbf a,2}$ and their union by $\mathcal S_{\mathbf a,2}$. The collection of all removed central open squares in this iteration is named $\mathcal R_{\mathbf a,2} $. The process is then continued iteratively.   
In particular, at step $i$ suppose we have divided 
 every $Q\in \mathcal T_{\mathbf a, i-1}$ into $a^{-2}_{i}$ essentially disjoint squares of side-length $a_1,\dots,a_i$. 
 \begin{itemize}
\item The collection of all removed open central  squares inside every square of $T_{\mathbf a, i-1}$ is named $\mathcal R_{\mathbf a,i}$. 
\item The collection of the remaining squares (excluding the central one) is denoted  by $\mathcal T_{\mathbf a,i}$. The number of essentially disjoint closed squares in $\mathcal T_{\mathbf a,i}$ is $ \prod^{i}_{j=1}(a^{-2}_{j}-1)$.
\item The union of all the squares in $\mathcal T_{\mathbf a,i}$ is denoted by $\mathcal S_{\mathbf a,i}$.
 \end{itemize}
Finally, we define the Sierpi\'nski carpet associated to the sequence $\mathbf a$ by
 \[
  \mathcal S_{\mathbf a} = \bigcap_{n\in\N}\mathcal S_{\mathbf a,n}=[0,1]^2\setminus \bigcup_{n\in\N}
\mathcal \bigcup_{Q\in \mathcal R_{\mathbf a,n}}Q. 
\]
Note that every $Q\in \mathcal R_{\mathbf a,n}$ is considered to be open and we have $\ell(Q)=a_1\cdots a_n$. One may check that $\mathcal S_{\mathbf a} $ is a compact connected set with empty interior such that $\partial \mathcal S_{\mathbf a}= \mathcal S_{\mathbf a}$. Furthermore, it is known from \cite{Sierpinski} that  $\mathcal S_{\mathbf a}$ has Hausdorff dimension equal to $2$ if $\mathbf a\in c_0$ and it has positive measure if and only if $ \mathbf a\in\ell_2$, being moreover Ahlfors regular in the later case. By \cite[Theorem 1.5]{Sierpinski}, the Sierpi\'nski carpet $\mathcal S_{\mathbf a}$ when equipped with the induced Euclidean distance and the restriction of the Lebesgue measure to $\mathcal S_{\mathbf a}$ satisfies a weak $(1,1)$-Poincar\'e inequality if and only if $\mathbf a \in \ell^1$.

\begin{figure}

\begin{center} \includegraphics[width=0.35\linewidth]{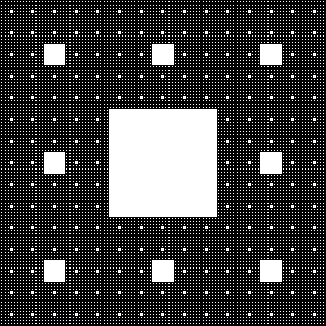}\end{center}\caption{ Sierpi\'nski carpet $\mathcal S_{\mathbf a}$ with $\mathbf a=\left(\frac{1}{3},\frac{1}{5},\frac{1}{7},\dots\right)$}
  \label{carpet:intro}

\end{figure}

\medskip

The main goal of this section is to give the proof of Theorem \ref{thm:Sierp-W11_ext._char}. For that, the difficult implication is to prove that if $\mathcal S_\mathbf a$ satisfies a weak $(1,1)$-Poincaré inequality then it is a $W^{1,1}$-extension set. This is what the next theorem states. As mentioned above, according to \cite[Theorem 1.5]{Sierpinski}, $\mathcal S_{\mathbf a}$ has a weak $(1,1)$-Poincaré inequality if and only if $\mathbf a\in \ell_1$.

\begin{theorem}\label{thm:Sierp-W11_ext.}
    Let $\mathbf{a}=(a_n)_{n\geq 1}
\in\ell_1 $ with $a_n^{-1}\geq 3$ being odd numbers for every $n\in\mathbb N$ and  let $\mathcal S_{\mathbf a}$ be the corresponding fat Sierpi\'nski carpet. Then there exists a linear extension operator $T\colon W^{1,1}(\mathcal S_{\mathbf a})\to W^{1,1}(\R^2)$ and some $C>0$ so that
$\| Tu \|_{W^{1,1}(\R^2)} \leq C\| u \|_{W^{1,1}(\mathcal S_{\mathbf a})} $ for every $ u\in W^{1,1}(\mathcal S_{\mathbf a}).$ 
\end{theorem}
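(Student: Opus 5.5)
The plan is to extend hole by hole. The complement of $\mathcal S_{\mathbf a}$ in $[0,1]^2$ is the disjoint union of the open squares $Q\in\mathcal R_{\mathbf a,n}$, $n\in\N$, and $\R^2\setminus[0,1]^2$. We will define $Tu$ separately on each of these open pieces through an explicit linear formula using only the values of $u$ near $\partial Q$. Then we will show that the glued function lies in $W^{1,1}(\R^2)$ with the right bound.

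\textbf{Step 1: the local extension.} Fix a hole $Q\in\mathcal R_{\mathbf a,n}$ of side $\ell=a_1\cdots a_n$. Let $A_Q$ be the square annulus of $\mathcal S_{\mathbf a}$ consisting of the $a_n^{-2}-1$ squares of $\mathcal T_{\mathbf a,n}$ surrounding $Q$, which have side $\ell$. Since $\mathbf a\in\ell_1$, the set $A_Q\cap\mathcal S_{\mathbf a}$ has measure comparable to $|A_Q|$; indeed $\prod(1-a_j^2)>0$ uniformly. We use a Whitney decomposition of $Q$ into dyadic-type squares $P$ with $\ell(P)\sim\dist(P,\partial Q)$. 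To each $P$ we associate a reflected square $P^*\subset A_Q$, or more precisely a piece of the ring of comparable size adjacent to the nearest boundary point. We set $Tu=\sum_P\varphi_P\,u_{P^*\cap\mathcal S_{\mathbf a}}$, where $\{\varphi_P\}$ is a partition of unity with $|\nabla\varphi_P|\lesssim\ell(P)^{-1}$. This formula is linear in $u$. The complement of the unit square is treated in the same way, reflecting across $\partial[0,1]^2$.

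\textbf{Step 2: local estimates.} On each $P$ we have $|\nabla Tu|\lesssim \ell(P)^{-1}\sum_{P'\sim P}|u_{P^*}-u_{P'^*}|$, where $P'$ ranges over the neighbours of $P$. We bound these differences by the weak $(1,1)$-Poincaré inequality of $\mathcal S_{\mathbf a}$ applied on a ball of radius $\sim\ell(P)$ containing $P^*\cup P'^*$, using Ahlfors regularity to compare averages. This gives $\int_P|\nabla Tu|\lesssim\int_{\lambda B_P\cap\mathcal S_{\mathbf a}}\rho_u$. Summing over $P$, the balls $\lambda B_P$ have bounded overlap inside a fixed neighbourhood $\widetilde A_Q$ of $\partial Q$ of width $\sim\ell$. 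Hence
\[\int_Q|\nabla Tu|\lesssim\int_{\widetilde A_Q\cap\mathcal S_{\mathbf a}}\rho_u, \qquad \int_Q|Tu|\lesssim\int_{\widetilde A_Q\cap\mathcal S_{\mathbf a}}|u|+\ell\int_{\widetilde A_Q\cap\mathcal S_{\mathbf a}}\rho_u.\]

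\textbf{Step 3: bounded overlap across holes.} This is the crucial geometric point. The neighbourhoods $\widetilde A_Q$ of holes at different generations do overlap: a point of $\mathcal S_{\mathbf a}$ lies in the rings of holes of all generations $n$ around it. To avoid a divergent sum, we shrink the reflection region. We choose $P^*$ inside a ring of width $\sim\ell(P)$ around $Q$, so that only reflected squares of size $\ge c\,\dist(x,Q)$ near a point $x$ contribute. A point $x\in\mathcal S_{\mathbf a}$ at distance $d$ from $Q$ is then charged only through Whitney squares of size $\gtrsim d$. Since holes of generation $m$ are separated by $\gtrsim a_1\cdots a_{m-1}$, with $\dist(Q_i,Q_j)\gtrsim\min\diam$, the number of holes whose charged region contains $x$ at scale $\sim r$ is bounded for each dyadic $r$. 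Combined with the factor structure of the Poincaré balls, this should give a total overlap $\lesssim 1$.

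\textbf{Step 4: gluing.} We must show that the glued $Tu$, equal to $u$ on $\mathcal S_{\mathbf a}$ and to the local extensions on the holes, belongs to $W^{1,1}(\R^2)$. Since $|\mathcal S_{\mathbf a}|>0$ and $\mathcal S_{\mathbf a}$ has empty interior, no interior derivative is available, so we argue through ACL/curves. For almost every horizontal or vertical line, the line crosses the holes along segments. We show that $Tu$ restricted to the line is absolutely continuous, with derivative bounded by $\rho_u$ on the carpet and by $|\nabla Tu|$ in the holes. For this we approximate $u$ by Lipschitz functions on $\mathcal S_{\mathbf a}$; these are dense in $W^{1,1}$ of a PI space. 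For Lipschitz $u$ the extension is continuous across $\partial Q$ with quantitative control, so $Tu$ is Lipschitz on $\R^2$. We then pass to the limit using linearity and the norm bound of Steps 2–3.

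I expect the main obstacle to be Step 3. The overlap of charging regions across infinitely many generations of holes is exactly where $\mathbf a\in\ell_1$ must enter beyond merely supplying the Poincaré inequality. The first thing I would try is to charge each Whitney square only to the portion of $\mathcal S_{\mathbf a}$ in the generation-$n$ grid squares adjacent to it, and then bound the number of generations charging a point by a geometric series.
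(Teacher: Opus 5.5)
Your Steps 1, 2 and 4 are essentially what the paper does. The paper also uses Whitney averages hole by hole, then an ACL argument for Lipschitz $u$, with $\mathbf a\in\ell_1$ guaranteeing that a.e.\ horizontal or vertical line meets only finitely many holes.

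\textbf{The gap is Step 3.} This is the heart of the theorem, and you leave it at ``this should give''. Your Step 2 bound $\int_Q|\nabla Tu|\lesssim\int_{\widetilde A_Q\cap\mathcal S_{\mathbf a}}\rho_u$, with $\widetilde A_Q$ a neighbourhood of $\partial Q$ of width comparable to $\ell(Q)$, sums to
\[
\sum_Q\int_{\widetilde A_Q\cap\mathcal S_{\mathbf a}}\rho_u=\int\rho_u\sum_Q\chi_{\widetilde A_Q}.
\]
For arbitrary $\rho_u$ this is controlled by $\int\rho_u$ only if the multiplicity $\sum_Q\chi_{\widetilde A_Q}$ is bounded, and it is not.

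For any fixed $c>0$, the number of holes $R$ with $\dist(x,R)\le c\,\ell(R)$ is unbounded on sets of positive measure:
\begin{itemize}
\item Edge squares of a grid square are never removed, so every hole $R$ is touched by grid squares of every later generation.
\item Since their side lengths $a_1\cdots a_m$ tend to $0$, one of them, $T_1$, lies in the strip of width $c\,\ell(R)$ around $R$.
\item The central hole $R_1$ of $T_1$ in turn has a much smaller grid square $T_2\subset T_1$ inside the strip of width $c\,\ell(R_1)$ around $R_1$, and so on.
\end{itemize}
Every point of $T_M\cap\mathcal S_{\mathbf a}$, a set of measure $|T_M|\prod_{j}(1-a_j^2)>0$ over the later generations, lies within the $c$-neighbourhoods of the $M$ holes $R,R_1,\ldots,R_{M-1}$. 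So ``bounded for each dyadic scale'' does not sum to a bounded total, and there is no pointwise geometric series. Shrinking the reflection regions to width $c\,\ell(Q)$ does not help, since the construction works for every $c$. Step 3 cannot be closed by overlap counting with Poincar\'e balls of the whole carpet.

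\textbf{The missing idea} is to make the charging regions of different holes disjoint by carving, and to pay for it with a Poincar\'e inequality on the carved sets.
\begin{itemize}
\item For $Q\in\mathcal R_{\mathbf a,n}$ with $n\ge k_{\mathbf a}$ (so $a_m\le 1/5$ for $m\ge n$), the paper uses only
\[
S_Q=(3\overline Q\setminus Q)\setminus\bigcup_{m>n}\bigcup_{R\in\mathcal R_{\mathbf a,m}}3R,
\]
the ring around $Q$ with every later hole enlarged threefold removed. These sets are essentially pairwise disjoint, so $\sum_Q\|u\|_{W^{1,1}(S_Q)}\le\|u\|_{W^{1,1}(\mathcal S_{\mathbf a})}$.
\item The price is Lemma~\ref{lem:modif_Sierpinski_preserve_properties}: each $S_Q$ is Ahlfors regular and supports a weak $(1,1)$-Poincar\'e inequality with constants independent of $Q$. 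This comes from the Eriksson-Bique--Gong criterion (Theorem~\ref{thm:Sylvester_22}) applied to the tail $(a_{n+k})_{k\ge1}$, after checking the small-projection condition for the enlarged holes $3R$. Its hypothesis that this tail is in $\ell_1$ is where $\mathbf a\in\ell_1$ is used beyond the Poincar\'e inequality of $\mathcal S_{\mathbf a}$ itself.
\item The Whitney extension into $Q$ is then built from $u|_{S_Q}$ alone, with chains of Whitney balls inside $S_Q$ staying away from $Q$. This gives $\|T_Qu\|_{W^{1,1}(Q)}\lesssim\|u\|_{W^{1,1}(S_Q)}$ uniformly in $Q$, which now sums.
\item The finitely many early-generation holes are treated with $\mathcal S_{\mathbf a}\cap 3\overline Q$.
\end{itemize}
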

Before proceeding to the proof, we first establish three preliminary lemmas: Lemmas \ref{lem:modif_Sierpinski_preserve_properties}, \ref{lem:sierpinski_ext_op.}, and \ref{lem:sierpinski_ext_op._2}.

\medskip

Let us fix from now on  $\mathbf a=(a_1,a_2,\dots)\in\ell_1$, where the sequence is formed by reciprocals of odd numbers bigger than three.
For our purposes we want to make sure that certain specific subsets of $\mathcal S_{\mathbf a}$ still keep the properties of being Ahlfors regular and satisfying a weak $(1,1)$-Poincaré inequality. Namely, for a given $ \mathbf a\in\ell_1$ take $k_{\mathbf a}\in\N$ so that
\begin{equation}\label{eq:Sierpinki_1}
a_{n}^{-1}\geq 5,\quad \text{for all}\; \; n\geq k_{\mathbf a}. \end{equation}
For any $n\geq k_{\mathbf a}$ and any  square $Q\in \mathcal R_{\mathbf a,n}$ let us define the compact subset $S_Q\subset \mathcal S_{\mathbf a}$ as
\begin{equation}\label{eq:Sierpinski_2}
\mathcal S_Q=(3\overline{Q}\setminus Q)\setminus \left(\bigcup_{m>n}\bigcup_{R\in \mathcal R_{\mathbf a,m}} 3R \right).
\end{equation}
It is important to note that $S_Q\cap S_{Q'}=\emptyset$ whenever $Q,Q'\in \mathcal R_{\mathbf a,n}$, $n\geq k_{\mathbf a}$ and $Q\neq Q'$. Another key point for us is that the sets $S_Q$ are Ahlfors regular and satisfy the $(1,1)$-Poincaré inequality, a fact that will be proved in Lemma \ref{lem:modif_Sierpinski_preserve_properties} with the help of the following result from \cite{EG22}.

\begin{theorem}{\cite[Theorem 1.3]{EG22}}\label{thm:Sylvester_22}
    Let $E\subset\R^2$ be a non-empty compact set and $\mathbf{b}=(b_k)_{k\geq 1}$ a sequence of reciprocals of positive integers. Let also $s_0=\diam (E)$ and $s_k=b_k s_{k-1}$ for $k\geq 1$. Assume $\{\mathcal R_{\mathbf b, k}\}^{\infty}_{k=1} \subset E$ is a collection of domains for which there exists constants $\delta\in (0,1)$, $L>0$ so that for each $R\in\mathcal R_{\mathbf b,k}$:
    \begin{enumerate}
        \item $\partial R$ is connected, and the sets $\R^2\setminus R$ and $E$ are uniform. 
        \item $\diam (R)\leq L s_k$.
        \item $\text{\em dist} (R,E^c)\geq \delta s_{k-1}$.
        \item For all $\ell\leq k$ and $R'\in \mathcal R_{\mathbf b,\ell}$ then $\text{\em dist}(R,R')\geq \delta s_{k-1}$.
    \item For every $k\in\N$, if $r\geq s_{k-1}$ then for all $x\in E$,
    $$\mathcal H^1\left(  \pi_i\left(  B(x,r)\cap \bigcup_{R\in\mathcal R_{\mathbf b,k}}R \right) \right)\leq L rb_k,\quad \text{for all $i=1,2$.} $$
    \end{enumerate}
    Then, if $\mathbf b \in \ell_1$, the set $E\setminus \bigcup_{k\in\N}\bigcup_{R\in \mathcal R_{\mathbf b,k}}R$ satisfies a weak $(1,1)$-Poincaré inequality. 
\end{theorem}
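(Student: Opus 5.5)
The plan is to verify the weak $(1,1)$-Poincar\'e inequality through Semmes' pencil-of-curves criterion. For an Ahlfors $2$-regular complete space it suffices to find a constant $C$ with the following property. For every pair of points $x,y$ in $K:=E\setminus \bigcup_{k}\bigcup_{R\in\mathcal R_{\mathbf b,k}}R$ there should be a probability measure $\sigma_{x,y}$ on rectifiable curves in $K$ joining $x$ to $y$. These curves should have length $\lesssim |x-y|$, and for every Borel $A$ they should satisfy
$$\int \mathcal H^1(\gamma\cap A)\,d\sigma_{x,y}(\gamma)\le C\int_{A\cap C B_{x,y}}\Bigl(\frac{1}{|x-z|}+\frac{1}{|y-z|}\Bigr)\,dz ,$$
where $B_{x,y}$ is a ball of radius $\sim|x-y|$. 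Ahlfors regularity of $K$ has to be checked first. It follows from (2), (3), (4) and the estimate (5) summed over $k$ with $\mathbf b\in\ell_1$, which shows that the removed holes occupy only a controlled fraction of each ball $B(x,r)$.

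I would build the pencils inductively over the level $k$. At level $0$, the uniformity of $E$ from (1), together with its Ahlfors regularity, gives a pencil of this type in $E$ itself. The curves run along a uniform (twisted-cone) path, thickened at each scale into a ``tube'' of width comparable to the distance to $\partial E$. Then, given a pencil $\sigma^{(k-1)}$ of curves avoiding the holes of levels $<k$, I modify every curve that meets some $R\in\mathcal R_{\mathbf b,k}$. The modification replaces the portion inside a neighbourhood of $R$ by a detour through $N_\delta(R)\setminus R$. Conditions (3) and (4) guarantee that this neighbourhood of size $\sim\delta s_{k-1}$ lies in $E$ and misses all holes of levels $\le k$. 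The uniformity of $\R^2\setminus R$ and the connectedness of $\partial R$ from (1) provide detours of length $\lesssim\diam R\le L s_k$. To avoid concentrating mass, the detours are spread over a one-parameter family of parallel arcs filling an annular region of width comparable to $\diam R$ around $R$.

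The key estimate concerns the density. Condition (5), applied to the projections $\pi_1,\pi_2$, says that within any ball of radius $r\ge s_{k-1}$ the level-$k$ holes have projected length at most $L r b_k$. Hence the rerouting adds at most a multiplicative factor $(1+C b_k)$ to the density bound of the pencil. The curves are essentially ``horizontal/vertical'' at scales $\ge s_{k-1}$, so the $\sigma$-mass hitting the holes is controlled by the projected measure. Iterating, the density bound after step $k$ is $C_0\prod_{j\le k}(1+Cb_j)\le C_0 e^{C\|\mathbf b\|_{\ell_1}}$, uniformly in $k$, and curve lengths stay $\lesssim |x-y|$ by the same product argument. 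A compactness argument then gives limit pencils $\sigma_{x,y}$ supported on curves in $K$: Arzel\`a--Ascoli on curves, plus weak-$*$ compactness of measures. The density bound passes to the limit by lower semicontinuity, and closedness of $K$ ensures that the limit curves lie in $K$. Semmes' theorem then yields the weak $(1,1)$-Poincar\'e inequality.

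The main obstacle I expect is the density control in the rerouting step near the endpoints $x,y$. There the Riesz kernel is large, and a hole $R$ of level $k$ may lie at distance $\ll s_{k-1}$ from $x$, so (5) must be applied at scale $r\sim |x-z|$ rather than $s_{k-1}$. I would handle this by treating holes of diameter $\gtrsim |x-z|$ separately. By (3) and (4) there are boundedly many such holes near $x$, and for these the detour is absorbed into the uniform-path construction. The projection estimate (5) is then applied only to holes that are small relative to their distance from the endpoints.
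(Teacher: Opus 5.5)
The paper gives no proof of this statement. It is quoted from \cite{EG22} and used only as a black box in Lemma~\ref{lem:modif_Sierpinski_preserve_properties}, so your sketch must stand on its own. The architecture (Semmes pencils modified level by level, then a limit) is reasonable. The limit step is sound: $\gamma\mapsto\int_\gamma\chi_U\,ds$ is lower semicontinuous for open $U$, and uniform limits of curves avoiding the open holes of levels $\le j$ still avoid them.

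The gap is the central claim that level $k$ multiplies the density bound by $1+Cb_k$. The Riesz-type inequality must hold for \emph{every} Borel $A$, so it is a pointwise bound on the density of $A\mapsto\int\mathcal H^1(\gamma\cap A)\,d\sigma(\gamma)$. Condition (5) can at most tell you what proportion of mass or length is affected at level $k$; it says nothing about where the rerouted mass goes. With your detours, the flux $\approx\rho\,\diam(R)$ of curves that crossed $R$ is pushed into a collar of width comparable to $\diam(R)$. That collar already carries comparable flux, so the density there grows by a factor $1+c$ with $c>0$ independent of $k$, however small $b_k$ is. These collars cannot avoid later holes: in $\mathcal S_{\mathbf a}$, each of the eight squares around a level-$k$ hole contains a level-$(k+1)$ hole. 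Around those, the same thing happens inside an already thickened collar. Along nested chains the factors multiply, so densities of order $(1+c)^m$ times the kernel occur on sets of positive measure for every $m$. The induction therefore yields no uniform Riesz bound.

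The missing idea is a rerouting that is gentle at the separation scale. Around each level-$k$ hole, the locally almost parallel bundle must be deformed across the whole $\delta s_{k-1}$-neighbourhood provided by (3)--(4). It should be compressed transversally by a ratio $1+O(\diam(R)/(\delta s_{k-1}))=1+O(Lb_k/\delta)$ and tilted by only $O(b_k)$, so that it is still almost parallel at scale $s_k$ for the next level. Only this gives a pointwise factor $1+Cb_k$. You would then have to carry the ``locally almost parallel, evenly spread'' property through the induction, including in the wakes behind earlier holes, where the bundle splits, and near $x,y$.

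Relatedly, your appeal to (5) rests on the curves being ``essentially horizontal/vertical'', which is unjustified. A pencil from $x$ fans out over a whole cone of directions, while (5) only controls the two coordinate projections. For oblique directions, the transverse projection of the level-$k$ holes in $B(x,r)$ need not be small.

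A smaller point concerns Ahlfors regularity. Summing (5) over $k$ bounds the hole density only by $\lesssim L\sum_k b_k$, which need not be $<1$. Ahlfors regularity needs three things, as in the proof of Lemma~\ref{lem:modif_Sierpinski_preserve_properties}: the tail of $\sum b_k$ at small scales, the fact that a small ball meets at most one large hole (whose complement is uniform), and a doubling argument at large scales.
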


\begin{lemma}\label{lem:modif_Sierpinski_preserve_properties}
   For any fixed $n\geq k_{\mathbf a}$ and $Q\in \mathcal R_{\mathbf a,n}$, the set $S_Q$ is Ahlfors regular and satisfies a weak $(1,1)$-Poincaré inequality (with constants independent of $n$ and $Q$).
\end{lemma}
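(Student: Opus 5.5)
The plan is to realize $S_Q$ as a set of the form covered by Theorem \ref{thm:Sylvester_22}, rescale so the constants do not depend on $n$ or $Q$, and then prove Ahlfors regularity directly from the smallness of the removed squares.

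\textbf{Setup after rescaling.} Both Ahlfors regularity and the weak $(1,1)$-Poincaré inequality are invariant under similarities with unchanged constants (the measure scales by $r^2$, lengths by $r$). So I rescale $Q$ to side length $1$. Set $\ell_n=a_1\cdots a_n$. The annulus $3\overline Q\setminus Q$, of side lengths $3\ell_n$ and $\ell_n$, then becomes a fixed closed square annulus $E_0$ of outer side $3$ and inner side $1$.

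\textbf{Identifying the holes.} For $m>n$, only those $R\in\mathcal R_{\mathbf a,m}$ with $3R$ meeting $E_0$ matter. For these I will check two facts:
\begin{itemize}
\item every such $3R$ lies inside a square of $\mathcal T_{\mathbf a,m-1}$;
\item if $3R$ meets $3\overline Q\setminus Q$, then $3R$ does not cross $\partial(3Q)$ or $\partial Q$.
\end{itemize}
Both use $a_m^{-1}\ge 5$ for $m\ge k_{\mathbf a}$, which places the central square at distance at least $2a_1\cdots a_m$ from the boundary of its parent. Hence $S_Q=E_0\setminus\bigcup_{m>n}\bigcup_R 3R$, with holes that are open squares.

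\textbf{Applying Theorem \ref{thm:Sylvester_22}.} I take $E=E_0$, $b_k=a_{n+k}$, and $\mathcal R_{\mathbf b,k}=\{3R: R\in\mathcal R_{\mathbf a,n+k},\ 3R\subset E_0\}$. After rescaling, $s_k$ is comparable to $a_{n+1}\cdots a_{n+k}$; the factor $\diam(E_0)$ is harmless. I then verify the hypotheses in order.
\begin{itemize}
\item Hypothesis (1): $\partial(3R)$ is connected, and the complement of a square is uniform. $E_0$ is a closed square annulus, which is uniform in the sense of Definition \ref{def:unif}.
\item Hypothesis (2): $\diam(3R)=3\sqrt2\,s_k$.
\item Hypotheses (3) and (4): The centre square of a parent $P\in\mathcal T_{\mathbf a,n+k-1}$ has side $a_{n+k}\ell(P)$ and lies at distance about $\frac{1}{2}(1-a_{n+k})\ell(P)\ge\frac{2}{5}\ell(P)$ from $\partial P$. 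Its triple therefore stays at distance $\gtrsim \ell(P)\sim s_{k-1}$ from $\partial P$. This bounds the distance from $E_0^c$, from $\partial Q$ and $\partial(3Q)$, and from the holes of earlier generations, which lie outside the interior of $P$. Holes of the same generation sit in different parents and are also separated by $\gtrsim s_{k-1}$.
\item Hypothesis (5): Each parent of side $s_{k-1}$ contributes one hole whose projection has length $3b_ks_{k-1}$. A ball of radius $r\ge s_{k-1}$ meets $\lesssim r/s_{k-1}$ parents in each row or column projecting to a given interval. So the projection of the union has $\mathcal H^1$ measure $\lesssim (r/s_{k-1})\,b_ks_{k-1}=rb_k$.
\end{itemize}
Since $\mathbf b$ is a tail of $\mathbf a\in\ell_1$, it lies in $\ell_1$. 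Its $\ell_1$ norm is at most $\|\mathbf a\|_{\ell_1}$, so the constants in the conclusion are uniform. To justify this I will have to check that the constants in \cite{EG22} depend only on $\delta$, $L$, the uniformity constants and $\|\mathbf b\|_{\ell_1}$. \textbf{I expect this dependence check, together with the geometric verification of (3)--(4) near $\partial Q$ and $\partial(3Q)$, to be the main obstacle.}

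\textbf{Ahlfors regularity.} The upper bound is trivial. For the lower bound, take $x\in S_Q$ and $r\le\diam$. The set $E_0$ is Ahlfors regular, so $|B(x,r)\cap E_0|\gtrsim r^2$. I subtract the holes scale by scale.
\begin{itemize}
\item For generations with $s_{k-1}\le r$, the holes cover at most a fraction $\lesssim b_k^2$ of $B(x,r)$.
\item For generations with $s_{k-1}>r$, the ball meets at most one parent. Inside it, the ball avoids the hole except for a portion of area $\lesssim r\,b_k s_{k-1}$; in the relevant regime this gives a fraction $\lesssim b_k$ of $r^2$.
\end{itemize}
Summing, the removed area is at most $C\sum_{k\ge k_0}b_k\,r^2$. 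Since the tail of an $\ell_1$ sequence is small, choosing the starting index appropriately gives $|B(x,r)\cap S_Q|\ge c\,r^2$. Finitely many early generations are handled by the fixed separation from (3)--(4), with a constant that depends only on $\mathbf a$.
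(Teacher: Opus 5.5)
Your route to the Poincar\'e inequality is the paper's (Theorem~\ref{thm:Sylvester_22} with $E=3\overline Q\setminus Q$ and $b_k=a_{n+k}$). The lower Ahlfors bound, however, has a genuine gap in the generations with $s_{k-1}>r$.

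There the estimate $|B(x,r)\cap 3R|\lesssim r\,b_ks_{k-1}$ is a fraction $b_ks_{k-1}/r$ of $r^2$. That fraction is $\lesssim b_k$ only when $s_{k-1}\lesssim r$, i.e.\ exactly outside the regime where you use it. In fact no smallness is possible there. The holes are open, so $x\in S_Q$ can lie on an edge of some $3R$ whose side is much larger than $r$, and then $3R$ fills half of $B(x,r)$ however small $b_k$ is. Hence the removed area is not bounded by $C\sum_{k\ge k_0}b_kr^2$. Setting aside finitely many early generations does not help, since this happens at every generation whose holes are bigger than $r$. (Also, a ball with $r<s_{k-1}$ can meet several parents, not just one.)

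The missing idea, which is how the paper argues, is to split at the scale of $r$. Choose $j_0$ with $a_1\cdots a_{j_0}\lesssim r<c\,a_1\cdots a_{j_0-1}$. By the separation in (3)--(4), at most one hole $3R_0$ of generation $\le j_0$ then meets $B(x,r)$. Now use that $Y=(3\overline Q\setminus Q)\setminus 3R_0$ is Ahlfors regular with a constant $M$ independent of $R_0$: it is the annulus minus a single square kept at distance comparable to its parent from $\partial Q\cup\partial(3Q)$. Only the generations $k>j_0$, whose holes have diameter $\lesssim r$, are then subtracted, each at cost $\lesssim a_k^2r^2$. Take $k_0$ with $\sum_{k\ge k_0}a_k^2$ small compared with $M$, and restrict to $r\lesssim a_1\cdots a_{k_0-1}$; this gives $|B(x,r)\cap S_Q|\ge\frac M2|B(x,r)|$. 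The remaining radii follow by comparison, with a constant depending only on $\mathbf a$.

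A second, easily repaired, error is in your check of (4): the \emph{tripled} earlier holes need not avoid $\mathrm{int}\,P$. Let $R'\in\mathcal R_{\mathbf a,m'}$ with $m'>n$, and let $T\in\mathcal T_{\mathbf a,m'}$ be one of the eight squares adjacent to $R'$. Then $\mathrm{int}\,T\subset 3R'$, so every later hole with $R\subset T$ has $3R\subset 3R'$ and $\dist(3R,3R')=0$, violating (4). The fix is to remove from $\mathcal R_{\mathbf b,k}$ every $3R$ contained in some earlier $3R'$; this leaves $S_Q$ unchanged. For a retained $3R$ with parent $P$, every earlier $3R'$ is then disjoint from $\mathrm{int}\,P$, because the only alternative is that $P$ lies in a square adjacent to $R'$. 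So your bound $\dist(3R,\R^2\setminus\mathrm{int}\,P)\ge\frac15\ell(P)$ does give (4). The paper's ``the same argument shows (4)'' passes over this point too.
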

\begin{proof}

Let us first  verify the weak $(1,1)$-Poincaré inequality using Theorem \ref{thm:Sylvester_22}. In our case we set  $E=3\overline Q\setminus Q$,  define the subsequence $\mathbf b=(b_k)_{k\geq 1}=(a_{n+k})_{k\geq 1}$ and let the collection
$\mathcal R_{\mathbf b,k}=\mathcal R_{\mathbf a,n+k} $ for every $k\in\N$. Then, we can write $S_Q$ as
$$S_Q=(3\overline{Q}\setminus Q)\setminus\left( \bigcup_{m>n}\bigcup_{R\in \mathcal R_{\mathbf a,m}} 3R\right)=E\setminus  \bigcup_{k\in\N}\bigcup_{R\in \mathcal R_{\mathbf b,k}} 3R  .$$
Let us check that the assumptions $(1)-(5)$ of Theorem \ref{thm:Sylvester_22} are satisfied with the constants
$$L=7 \quad;\quad  \delta=\dfrac{1}{12} . $$
First, it is easy to check that $\partial (3R)$ is connected and that both $\R^2\setminus 3R$ and $E$ are uniform sets. Hence $(1)$ is satisfied. Second, we have $s_0=\diam (3\overline{Q})=3\sqrt{2}a_1\cdots a_n$ and $s_k=b_k s_{k-1}=s_0b_1\cdots b_k=3\sqrt{2}a_1\dots a_{n+k}$. 
Therefore, for every $R\in \mathcal R_{\mathbf b,k}=\mathcal R_{\mathbf a,n+k}$
$$\diam(3R)=3\sqrt{2}\ell(R)=3\sqrt{2}a_1\cdots a_{n+k}=s_k ,$$
so we have proved property $(2)$. Moreover, using that  $a^{-1}_{n+k}\geq 5 $ for every $k\geq 1$ we have that $a^{-1}_{n+k}-3\geq a^{-1}_{n+k}/2 $ and then

$$  \dist\left( 3R, 3\overline{Q}^c\right)\geq \dfrac{a^{-1}_{n+k}-3}{2}\sqrt{2}\ell(R) \geq \dfrac{a^{-1}_{n+k}}{4}\sqrt{2}\ell(R)=\dfrac{a_1\cdots a_{n+k-1}}{2\sqrt{2}}=\dfrac{1}{12}s_{k-1}. $$
This shows $(3)$. The same argument shows $(4)$. It only remains to check $(5)$. By symmetry, let us only explain the proof for the projection onto the $x$-coordinate axis. Take $k\in\N$, $r\geq s_{k-1}$ and $x\in 3 \overline Q$. Observe that 
$$\mathcal H^1\left(  \pi_1\left(  B(x,r)\cap \bigcup_{R\in\mathcal R_{\mathbf b,k}}3R \right) \right)\leq \mathcal H^1\left(  \pi_1(B(x,r)) \cap \pi_1\left( \bigcup_{R\in\mathcal R_{\mathbf b,k}}3R \right) \right) .$$
Call $\widetilde s$ the unique positive integer so that 
$$(\widetilde s -1)a_1\cdots a_{n+k-1}\leq 2r\leq \widetilde s a_1\cdots a_{n+k-1}.$$
Then we have 
\begin{equation}\label{eq:siep_modified_poincare_1}
    \widetilde s\leq \dfrac{2r}{a_1\cdots a_{n+k-1}}+1.
\end{equation}
On one hand, note that $\pi_1(B(x,r))$ is just an interval of length $2r$ and so in particular  $ \mathcal H^1(\pi_1(B(x,r)))\leq \widetilde s a_1\cdots a_{n+k-1} $. On the other hand, $ \pi_1(\bigcup_{R\in\mathcal R_{\mathbf b,k}}3R)$ is a disjoint union of intervals of length  $3a_1\cdots a_{n+k}$ centered at some points which are $(a_1\cdots a_{n+k-1})$-separated. We can then  assert that 
$$ \mathcal H^1\left(  \pi_1(B(x,r)) \cap \pi_1\left( \bigcup_{R\in\mathcal R_{\mathbf b,k}}3R \right) \right)\leq 3\widetilde s (a_1\cdots a_{n+k}). $$
By using \eqref{eq:siep_modified_poincare_1} and the fact that $r\geq s_{k-1}=3\sqrt{2}a_1\cdots a_{n+k-1}$ we conclude that
\begin{align*}
\mathcal H^1\left(  \pi_1\left(  B(x,r)\cap \bigcup_{R\in\mathcal R_{\mathbf b,k}}3R \right) \right)&\leq 3\widetilde s (a_1\cdots a_{n+k})\leq 6ra_{n+k}+3a_1\cdots a_{n+k}\\
&\leq 6ra_{n+k}+\dfrac{3r}{3\sqrt{2}}a_{n+k}\leq 7ra_{n+k}=7rb_k. 
\end{align*}
Continuing with the proof of Lemma \ref{lem:modif_Sierpinski_preserve_properties}, for the Ahlfors regularity of $S_Q$ we can argue as in \cite[Lemma 2.22]{EG22}, once we know that $S_Q$ lies under the assumptions of Theorem \ref{thm:Sylvester_22}. The Ahlfors regularity constant of $S_Q$ will depend only on the sequence $\mathbf b$, hence on $\mathbf a$. For completeness we provide the details. Recall that $Q\in \mathcal R_{\mathbf a,n}$ and hence $\ell(Q)=a_1\cdots a_n$. Noting that $\sum^{\infty}_{k=1}a^{2}_{k}<\sum^{\infty}_{k=1}a_k<\infty$
we pick $k_0\geq n\geq  k_{\mathbf a}$ so that 
\begin{equation}\label{eq:Ahlfors-S_Q_0}
    \sum^{\infty}_{k=k_0}a^{2}_{k}< \dfrac{M}{4608}.
\end{equation}
Here $M>0$ is chosen so that for every $x\in S_Q$, $r\in (0,\diam (S_Q))$, $R\in\mathcal R_{\mathbf a,k}$ and $k\geq n$ we have $$|B(x,r)\cap ((3\overline Q \setminus Q)\setminus 3R)|\geq M |B(x,r)|.$$
For instance one may take $M=1/100$.
We will prove that for all $x\in S_Q$ and $r\in (0,a_1\cdots a_{k_0-1}/4\sqrt{2})$  
\begin{equation}\label{eq:Ahlfors-S_Q}
|B(x,r)\cap S_Q|\geq C |B(x,r)|. 
\end{equation}
We assert that this is enough to get the same estimate \eqref{eq:Ahlfors-S_Q} holding for every $x\in S_Q$ and every $r\in (0,\diam (S_Q))$. Indeed, whenever $r\in [a_1\cdots a_{k_0-1}/4\sqrt{2},\diam (S_Q))$ we can write
\begin{align*}
|B(x,r)\cap S_Q| &\geq |B(x,a_1\cdots a_{k_0-1}/8\sqrt{2}  )\cap S_Q|\geq C |B(0,1)| \left(\dfrac{a_1\cdots a_{k_0-1}}{8\sqrt{2}}\right)^2\\
&\geq C|B(0,1)|\dfrac{r^2}{\diam(S_Q)^2} \left(\dfrac{a_1\cdots a_{k_0-1}}{8\sqrt{2}}\right)^2\\
&=C   \left(\dfrac{a_1\cdots a_{k_0-1}}{8\sqrt{2} \diam(S_Q)}\right)^2 |B(x,r)|.
\end{align*}
In order to prove \eqref{eq:Ahlfors-S_Q} fix $x\in S_Q$ and  $r\in (0,a_1\cdots a_{k_0-1}/4\sqrt{2})$. Next we take $j_0\geq k_0$ so that 
$$a_1\cdots a_{j_0}/4\sqrt{2}\leq r<a_1\cdots a_{j_0-1}/4\sqrt{2}.$$ 
Observe that $B(x,r)$ intersects at most one square $3R_0$ from the family $\{ 3R: R\in\mathcal R_{\mathbf a,m}, \, n\leq m\leq j_0  \} $. This is due to the property (4) above which implies that the elements of  $\{ 3R: R\in\mathcal R_{\mathbf a,m}, \, n\leq m\leq j_0  \} $ are $a_1\dots a_{j_0-1}/2\sqrt{2}$-separated and by the choice $r<a_1\cdots a_{j_0-1}/4\sqrt{2}$. Set $$Y=(3\overline Q \setminus Q)\setminus 3R_0$$ and note that $Y$ is $M$-Ahlfors regular, where $M>0$ is independent of the square $R_0$. We can write
\begin{equation}\label{eq: Ahlfors-S_Q-1}
    B(x,r)\cap S_Q=B(x,r)\cap Y\setminus \left(  \bigcup_{k>j_0}\bigcup_{R\in\mathcal R_{\mathbf a,k}} 3R  \right).
\end{equation}
For a given $R\in\mathcal R_{\mathbf a,k}$, by calling $x_R$ the central point of $R$ we have $3R\subset B(x_R, 3a_1\cdots a_{k}/\sqrt{2})$. For a fixed $k\in\N$, the family $\{ B(x_R, a_1\cdots a_{k-1}/4\sqrt{2}):\, R\in \mathcal R_{\mathbf a,k}\} $ is disjoint. Moreover, if we let 
 $\mathcal F_k=\{ R:\,R\in\mathcal R_{\mathbf a,k}, \; 3R\cap B(x,r)\neq\emptyset\} $ for every $k>j_0$ we have that $\bigcup_{R\in\mathcal F_k}3R\subset B(x,4r) $ because
 $$r\geq \dfrac{a_1\cdots a_{j_0}}{4\sqrt{2}}\geq \dfrac{5 a_1\cdots a_k}{4\sqrt{2}}=\left(\dfrac{5}{12}\right) 3\sqrt{2} a_1\cdots a_k\geq\dfrac{1}{3}\diam (3R). $$
In this way, for every $k>j_0$

\begin{equation}\label{eq: Ahlfors-S_Q-2}
\begin{split}
    \left| \bigcup_{R\in \mathcal R_{\mathbf a,k}}B(x,r)\cap 3R \right|&\leq \sum_{R\in\mathcal F_k}|B(0,1)| (9/2)(a_1\cdots a_k)^2=\sum_{R\in\mathcal F_k }|B(0,1)| \left(\dfrac{a_1\cdots a_{k-1}}{4\sqrt{2}}\right)^2   144a^2_k \\
    &\leq |B(0,1)| 16 r^2 \left(144a^2_k  \right).
    \end{split}
\end{equation}
Therefore, using \eqref{eq:Ahlfors-S_Q_0}, \eqref{eq: Ahlfors-S_Q-1} and \eqref{eq: Ahlfors-S_Q-2} we conclude that
\begin{align*}
    |B(x,r)\cap S_Q|&=\left|B(x,r)\cap Y \setminus \bigcup_{k>j_0}\bigcup_{R\in \mathcal R_{\mathbf a,k}}3R\right|\geq |B(x,r)\cap Y|-\left| \bigcup_{k>j_0}\bigcup_{R\in \mathcal R_{\mathbf a,k}}3R \right|\\
    &\geq M |B(x,r)|-\sum^{\infty}_{k=k_0}a^2_k |B(x,r)| 2304 \geq \left(M- \dfrac{M}{4608} 2304\right) |B(x,r)|\\
    &=(M/2)|B(x,r)|.
\end{align*}

\end{proof}

\begin{lemma}\label{lem:sierpinski_ext_op.}
    Let $S\subset \R^2$ be one of the following closed sets
    \begin{enumerate}
        \item[(i)] $\mathcal S_{\mathbf a}\cap 3\overline{Q}$ for some $Q\in\mathcal R_{\mathbf a,n}$ with $n<k_{\mathbf a}$; or
        \item[(ii)] $S_Q$ for some $Q\in\mathcal R_{\mathbf a,n}$ with $n\geq k_{\mathbf a}$.
    \end{enumerate}
 Then there exists a linear continuous operator $T\colon W^{1,1}(S)\to W^{1,1}(Q)$ whose operator norm depends only on the Ahlfors regularity constant and the Poincaré constants of $S$ (and hence on those of $\mathcal S_{\mathbf a}$). 
 Moreover, for every $u\in W^{1,1}(S)\cap \Lip(S)$ and every $x\in\partial Q$ we have that
    \begin{equation}\label{eq:cont_up_to_boundary}
\lim_{y\to x,\, y\in Q}Tu(y)=u(x).  
    \end{equation}

\end{lemma}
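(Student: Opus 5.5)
We build $T$ by an explicit Whitney-type construction inside the open square $Q$. Let $\{Q_j\}_j$ be a Whitney decomposition of $Q$ into dyadic squares with $\diam(Q_j)\sim \dist(Q_j,\partial Q)$, and let $\{\varphi_j\}$ be a smooth partition of unity subordinate to the dilates $\frac{9}{8}Q_j$, with $|\nabla\varphi_j|\lesssim \ell(Q_j)^{-1}$. To each $Q_j$ we attach a ``reflected'' ball $B_j=B(x_j,r_j)$ with $x_j\in\partial Q\subset S$ a nearest boundary point and $r_j\sim \ell(Q_j)$. We then define
\[
Tu(y)=\sum_j \varphi_j(y)\, u_{B_j\cap S},\qquad y\in Q .
\]
This is clearly linear. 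The key geometric input is that $\partial Q\subset S$ and $S$ lies in the annulus $3\overline Q\setminus Q$. Hence $B_j\cap S$ is a ball of $S$ centered on $S$ of radius comparable to $\ell(Q_j)$. By Ahlfors regularity (Lemma \ref{lem:modif_Sierpinski_preserve_properties} in case (ii), and regularity of $\mathcal S_{\mathbf a}$ together with the finitely many possibilities in case (i)) it has measure $\sim \ell(Q_j)^2$.

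\textbf{$L^1$ estimate.} We have $\int_{Q_j}|Tu|\lesssim \sum_{k\sim j}\ell(Q_k)^2|u_{B_k\cap S}|\lesssim \sum_{k\sim j}\int_{B_k\cap S}|u|$. The balls $B_k$ have bounded overlap over Whitney squares of a fixed scale, since their centers on $\partial Q$ are $\ell$-separated. Summing over scales, however, gives overlap about $\log$ many near a point. To avoid this, we use instead $\int_{Q_j}|Tu|\lesssim \ell(Q_j)^2|u_{B_j\cap S}|$ and bound $|u_{B_j\cap S}|$ by $|u_{S}|$ plus a telescoping chain of Poincaré differences. Alternatively, and more simply, we note $\sum_j \ell(Q_j)^2|u_{B_j\cap S}| \le \int \sum_j \ell(Q_j)^2 |B_j\cap S|^{-1}\chi_{B_j}|u|$. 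For a point $z\in S$, the squares with $z\in B_j$ satisfy $\ell(Q_j)\gtrsim |z-\partial Q|$ and are bounded by $\ell(Q)$. Their number at scale $2^{-m}$ is $O(1)$, so the weight is $\sum_m O(1)\lesssim \log(\ell(Q)/\dist(z,\partial Q))$. This is not bounded, so we must take $B_j$ centered on $\partial Q$ with the whole of $B_j\cap S$ counted only at points with $\dist(z,\partial Q)\lesssim r_j$. The correct weight is then $\sum_{m} \mathbf 1_{2^{-m}\gtrsim \dist(z,\partial Q)}\,\mathcal{O}(1)$, still logarithmic. We therefore expect to use the Poincaré route instead: $\|Tu\|_{L^1(Q)}\lesssim \|u\|_{L^1(S)}+\ell(Q)\,\|\rho_u\|_{L^1(S)}$.

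\textbf{Gradient estimate.} On $Q_j$, since $\sum\varphi_k=1$, we have $|\nabla Tu|\lesssim \ell(Q_j)^{-1}\sum_{k\sim j}|u_{B_k\cap S}-u_{B_j\cap S}|$. Neighboring balls are contained in a common ball $B'$ of radius $\sim\ell(Q_j)$ centered on $\partial Q$, so the weak $(1,1)$-Poincaré inequality of $S$ gives $|u_{B_k\cap S}-u_{B_j\cap S}|\lesssim \ell(Q_j)^{-1}\int_{\lambda B'\cap S}\rho_u$. Hence $\int_{Q_j}|\nabla Tu|\lesssim \int_{\lambda B'_j\cap S}\rho_u$. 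The main obstacle is exactly the overlap issue seen above: summing $\int_{\lambda B'_j\cap S}\rho_u$ over all Whitney squares gives for $z\in S$ the multiplicity $\#\{j: z\in\lambda B'_j\}$, which grows like $\log(\ell(Q)/\dist(z,\partial Q))$. This is where the structure of $S$ is needed, and the step we expect to be hardest. We would try to exploit that $S$ has positive density only within distance $\sim\ell(Q)$ of $\partial Q$ on the outside, and to replace the averages over balls by averages over \emph{boundary-adjacent} pieces. Concretely, we choose $B_j\cap S$ replaced by $A_j=\{z\in S: \pi(z)\in\pi(Q_j),\ \dist(z,\partial Q)\in[\ell(Q_j),2\ell(Q_j)]\}$, where $\pi$ is the projection to the nearest side. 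These sets are disjoint across scales, still have measure $\sim\ell(Q_j)^2$ by Ahlfors regularity, and consecutive ones are joined within a ball of comparable size. Then each $z\in S$ lies in boundedly many $\lambda$-enlarged chaining balls of comparable scale, and the multiplicity is uniformly bounded. This yields $\|\nabla Tu\|_{L^1(Q)}\lesssim\|\rho_u\|_{L^1(S)}$ and, by the same disjointness, $\|Tu\|_{L^1(Q)}\lesssim\|u\|_{L^1(S)}$. The dependence of the constants is only on the regularity and Poincaré constants, as required.

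\textbf{Boundary continuity.} If $u\in\Lip(S)$ and $y\to x\in\partial Q$, then every $Q_j$ with $\varphi_j(y)\ne0$ has $\ell(Q_j)\lesssim \dist(y,\partial Q)\to0$. Its associated set lies within $O(|y-x|)$ of $x$, so $|u_{A_j}-u(x)|\le \Lip(u)\,O(|y-x|)$. Since $\sum\varphi_j=1$, this gives \eqref{eq:cont_up_to_boundary}.
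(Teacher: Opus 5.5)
\textbf{Verdict: there is a genuine gap.} Your architecture is the paper's: Whitney squares $Q_j$ of $Q$, a reflected piece of $S$ at distance $\sim\ell(Q_j)$ from $Q$ attached to each, a partition of unity, and Poincar\'e estimates between neighbours. You also correctly see why averages over balls centred on $\partial Q$ produce a logarithmic overlap. But the two claims meant to repair this fail as stated.

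\textbf{First gap: the averaging sets can be empty.} Ahlfors regularity bounds $|B(z,r)\cap S|$ from below only for balls centred at points $z\in S$. It gives nothing for a rigid region such as your $A_j$, which can lie inside a hole. For instance, take case (ii) with $a_{n+1}=a_{n+2}=1/5$ and $Q=(0,L)^2$. The square $R\in\mathcal R_{\mathbf a,n+2}$ centred at $(0.7L,-0.1L)$ gives $3R=(0.64L,0.76L)\times(-0.16L,-0.04L)$. This hole contains the whole band region $[\tfrac{11}{16}L,\tfrac34L]\times[-\tfrac18L,-\tfrac1{16}L]$ that you attach to the standard Whitney square $[\tfrac{11}{16}L,\tfrac34L]\times[\tfrac18L,\tfrac3{16}L]$. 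So $A_j=\emptyset$ and $u_{A_j}$ is undefined. The repair is to average over $B(z_j,c\ell(Q_j))\cap S$ for a point $z_j\in S$ with $\dist(z_j,Q)\sim\ell(Q_j)\gtrsim\dist(z_j,Q_j)$. Such a point exists: apply Ahlfors regularity to a ball of radius $C\ell(Q_j)$ centred on $\partial Q$, since a thin neighbourhood of $\partial Q$ fills only a small part of that ball.

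\textbf{Second, more serious gap: bounded multiplicity of the Poincar\'e balls.} The Poincar\'e inequality of $S$ is weak, so each difference $|u_{A_j}-u_{A_k}|$ is controlled by $\int_{\lambda_pB'\cap S}\rho_u$, and $\lambda_p$ is not at your disposal.
\begin{itemize}
\item A ball $B'$ of radius comparable to $\ell(Q_j)$ containing $A_j\cup A_k$ has its centre within $O(\ell(Q_j))$ of $Q$. So once $\lambda_p$ exceeds a fixed constant, $\lambda_pB'$ reaches $\partial Q$.
\item A point $z\in S$ at distance $\delta$ from $Q$ then lies in $\lambda_pB'_j$ for Whitney squares of every dyadic scale between $\delta$ and $\ell(Q)$. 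Its multiplicity is therefore $\gtrsim\log(\ell(Q)/\delta)$, which is exactly the overlap you diagnosed, now on the gradient side.
\item Since $\partial Q\subset S$ and $S$ has positive density there, $\sum_j\int_{\lambda_pB'_j\cap S}\rho_u$ is not bounded by $\|\rho_u\|_{L^1(S)}$.
\end{itemize}
Bounded overlap forces you to apply the Poincar\'e inequality only on balls of radius $\lesssim\dist(\cdot,Q)/\lambda_p$. The paper uses the Whitney covering of $S\setminus\partial Q$ with radii $\dist(z,Q)/(10\lambda_p)$. You must then connect the averaging balls of neighbouring Whitney squares by chains of a uniformly bounded number of such small balls. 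That requires a curve in $S$ of length $\lesssim\ell(Q_j)$ that stays at distance $\gtrsim\ell(Q_j)$ from $Q$, which is a geometric input about $S$ that your proposal does not supply.

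The paper obtains this curve from a uniform curve in $\R^2\setminus Q$, rerouted along the boundary of each hole it enters. The rerouting costs at most a factor $\sqrt2$ in length. It keeps the distance to $Q$ comparable because $\dist(3R,Q)\geq\ell(R)$.

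With these two changes your argument becomes the paper's proof. Your boundary-continuity step is fine.
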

\begin{proof}
Observe that in both cases $(i)$ and $(ii)$ (use Lemma \ref{lem:modif_Sierpinski_preserve_properties}) the set $S\subset 3\overline{Q}$ is an Ahlfors regular closed set satisfying the weak $(1,1)$-Poincaré inequality and with $\partial Q\subset S$. Let $C_a>0$ be the Ahlfors regularity constant and $C_p>0$ and $\lambda_p\geq 1$ the Poincaré constants.

First, we are going to define Whitney decompositions of $S\setminus \partial Q$ and of $Q$ that we will call $\widetilde{\mathcal W}=\{ B_i \}$ and $\mathcal W=\{ Q_i \}$ respectively. We start with the open square $Q$ because Whitney decomposition in this situation are quite standard. Namely, following \cite[Chapter 16]{Stein} we set $\mathcal W=\{ Q_i \}_{i\in\N}$ where:
\smallskip
\begin{itemize}
    \item[(W1)] Each $Q_i$ is a closed dyadic cube inside $Q$.
  \item[(W2)] $Q=\bigcup_{i\in\N} Q_i$ and for every $i\neq j$, $\text{int}(Q_i)\cap \text{int}(Q_j)$.
      \item[(W3)] For every $i\in\N$ we have $\sqrt{2}\ell(Q_i)\leq \dist(Q_i,\partial Q)\leq 4 \sqrt{2}\ell(Q_i) $.
        \item[(W4)] If $Q_i\cap Q_j\neq \emptyset$ we have $ (1/4)\ell(Q_i)\leq \ell(Q_j)\leq 4\ell(Q_i)$.
\end{itemize}
\smallskip
For $S\setminus \partial Q$ we will have to follow a more abstract approach to build the Whitney decomposition, which is more typical in the metric measure setting. For every $z \in S \setminus \partial Q$ set $r(z) \coloneqq \dist( z, Q )/(10\lambda_p)$. From the family $\left\{ B(z, r(z)/5) \right\}_{ z \in S\setminus \partial Q }$, we select a maximal subfamily $\left\{ B(z_i,r(z_i)/5) \right\}_{ i \in \N}$ of pairwise disjoint balls. For each $i \in \N$, we refer to $B_{i} \coloneqq B( z_{i}, r(z_i) )$ as a Whitney ball with center $z_{i}$ and radius $r_{i} \coloneqq r(z_i)$. We call $\widetilde{\mathcal{W}} = \left\{ B_i \right\}_{ i \in \N}$ a \emph{Whitney covering} of $S\setminus \partial Q$. The following properties are readily verified (see \cite[Section 4]{HKST15} for details).
\smallskip
\begin{itemize} 
    \item[$(\widetilde{\text{W1}})$] $S\setminus \partial Q =\bigcup_{i\in \N} B_i$ and $5^{-1}B_i \cap 5^{-1}B_j = \emptyset$ whenever $i \neq j$.
   \item[$(\widetilde{\text{W2}})$] If $B_i\cap B_j\neq \emptyset$ then $B_i\cup B_j\subset 4B_i \cap 4B_j$.
    \item[$(\widetilde{\text{W3}})$] For every $i\in\N$, $\dist(B_i,Q)=(50\lambda_p -1) r_i$.
\end{itemize}
\smallskip
Next, we choose a partition of unity $\left\{ \varphi_{i} \right\}_{ i \in \N }$ on $Q$ subordinate to the open cover $\{ (5/4) Q_i:\, i\in\N  \}$  and so that  $|\nabla \varphi_i(x)|\leq 4\ell(Q_i)^{-1}$ for every $x$. We can now go to the definition of the extension operator. For every $Q_i\in \mathcal W$ we choose $B_{s(i)}\in \widetilde W$ so that 
 $$\dist(B_{s(i)},Q_i)\leq 8\ell(Q_i)\leq 16\,\dist(B_{s(i)},Q). $$
 Note that the existence of $s(i)$ holds due to property $(\widetilde{\text{W3}})$ and, in particular,  this yields that  $\dist(B_{s(i)}, Q_i)\sim \ell(Q_i)\sim \ell (B_{s(i)})$ where the constants of comparability may depend on $\lambda_p$. We define for every $u\in W^{1,1}(S)$ and $x\in Q$,
    $$Tu(x)=
    \sum_{i\geq 1}\varphi_i(x)a_{i},\quad \text{where}\quad a_i=\fint_{B_{s(i)}\cap S}u(x)\, dx.$$
Observe that by the local finiteness of the partition of unity we have $Tu\in C^{\infty}(Q)$ and therefore $Tu\in L^1_{\loc}(Q)$. 
In what follows we call $\rho_u\in L^1(S)$ the minimal $p$-weak upper gradient of $u$ on $S$. We show next that $T$ is a bounded operator. That is, we will find $C>0$ such that for all $u\in W^{1,1}(S)$,
\smallskip
\begin{enumerate}
    \item[(a)] $\|Tu\|_{L^1(Q)}\leq C\|u\|_{L^1(S)}$.
    \item[(b)]  $\|\nabla Tu\|_{L^1(Q)}\leq C\|\rho_u\|_{L^1(S)}$.
\end{enumerate}
\smallskip
\noindent (a) Let us first verify that $\|Tu\|_{L^1(Q)}\leq C\|u\|_{L^1(S)}$ holds, for some constant $C>0$.  We have
\begin{equation*}
    \begin{split}
        \|Tu\|_{L^1(Q)}&=\int_Q |\sum_{i\geq 1}\varphi_i(x)a_i|\, dx\leq \int_Q \sum_{i\geq 1}\varphi(x)\left( \fint_{B_{s(i)}\cap S}|u(y)|\,dy \right)\, dx \\
        &= \sum_{j\geq 1}\int_{Q_j}\sum_{\{i:\, (5/4)Q_i \cap Q_j\neq \emptyset\}}\varphi(x)\left( \fint_{B_{s(i)}\cap S}|u(y)|\,dy \right)\, dx \\
        &\leq \sum_{j\geq 1}\sum_{\{i:\, (5/4)Q_i \cap Q_j\neq \emptyset\}} \dfrac{|Q_j|}{|B_{s(i)}\cap S|}\int_{B_{s(i)}\cap S}|u(y)|\, dy.
    \end{split}
\end{equation*}
By using the Ahlfors regularity of $S$ and that $|Q_j|\sim |B_{s(i)}|$ whenever $(5/4)Q_i \cap Q_j\neq \emptyset $ we get
$$\|Tu\|_{L^1(Q)}\lesssim \sum_{j\geq 1}\sum_{\{i:\, (5/4)Q_i \cap Q_j\neq \emptyset\}}\int_{B_{s(i)}\cap S}|u(y)|\, dy .$$
Finally, by changing the order of summation, and using that for a fixed $k\in\N$, $\#\{i:\, s(i)=k\}$ and  $\#\{i:\, B_i\cap B_k\neq \emptyset\} $ are uniformly bounded, we conclude that
\begin{equation}\label{eq:bounded_L^1_ope.}
\begin{split}
    \|Tu\|_{L^1(Q)}&\lesssim \sum_{i\geq 1}\sum_{\{j:\, (5/4)Q_i \cap Q_j\neq \emptyset\}}\int_{B_{s(i)}\cap S}|u(y)|\, dy \lesssim \sum_{i\geq 1}\int_{B_{s(i)}\cap S}|u(y)|\, dy \\
    &\lesssim \sum_{k\geq 1}\int_{B_{k}\cap S}|u(y)|\, dy\lesssim \int_S |u(y)|\,dy =\|u\|_{L^1(S)}.
    \end{split}
\end{equation}

\noindent (b) Next, we want to prove that $\|\nabla Tu\|_{L^1(Q)}\leq C\|\rho_u\|_{L^1(S)}$ for another constant $C>0$. This will require more effort. We begin by writing for every $i\in\N$ the estimate
\begin{equation}\label{eq:example_norm_bound}
\begin{split}
\|\nabla Tu\|_{L^1(Q_i)}&=\int_{Q_i}|\nabla Tu(x)|\, dx=\int_{Q_i}\left| \sum_{Q_j\cap Q_i\neq \emptyset} \nabla \psi_j(x)(a_j-a_i) \right|\, dx\\
&\leq 4 \int_{Q_i} \sum_{Q_j\cap Q_i\neq \emptyset} \ell(Q_j)^{-1}|a_j-a_i| \, dx\leq 8 \sum_{Q_j\cap Q_i\neq \emptyset} \ell(Q_i)|a_j-a_i|,
\end{split}
\end{equation}
where in the last line we are using that $\ell(Q_j)\geq (1/4)\ell(Q_i)$ whenever $Q_j\cap Q_i\neq \emptyset$. In order  to control the term $|a_i-a_j|$ we assert that if $Q_i\cap Q_j\neq \emptyset$  then there is a chain of Whitney balls 
$$C_{s(i),s(j)}=\{B_{i(0)},B_{i(1)}\dots,B_{i(N)}\}\subset \widetilde{\mathcal W}$$ 
so that 
\begin{itemize}
    \item The chain joins $B_{s(i)}$ with $B_{s(j)}$, that is $B_{i(0)}=B_{s(i)}$ and $B_{i(N)}=B_{s(j)}$.
    \item  For every  $j=0,\dots, N-1$ we have $B_{i(j)}\cap B_{i(j+1)}\neq \emptyset $ and also $\diam(B_{s(j)})\sim \ell(Q_i)\sim \ell(Q_j)$.
    \item The number of balls in the chain is uniformly controlled by a constant which does not depend on $i$ or $j$. We write $N\leq N_1$.
   
\end{itemize}
Obtaining such chains of Whitney balls is done by finding a quasiconvex curve in $S$ connecting the first ball $B_{i(0)}$ with the last ball $B_{i(N)}$ and which stays far away from $Q$. Let us prove this.\footnote{Another proof (possibly simpler) of the existence of Whitney chains could be done by using quasiconvex curves that are countable concatenations of vertical and horizontal segments, and that  lie far away from $\partial Q$. This approach relies on the nice structure of the carpet $S$. However, the  method we use is also valid to define chains of Whitney balls in the general setting of closed sets $E\subset \R^2$ that are Ahlfors regular and satisfy the weak $(1,1)$-Poincaré inequality.}

\medskip

\noindent First, call $x_i\in \overline{ B}_{s(i)}\cap S  $ and $x_j\in \overline{ B}_{s(j)}\cap S$ so that $|x_i-x_j|=\dist(B_{s(i)}\cap S_Q, B_{s(j)}\cap S)$. Using that $\R^2\setminus Q$ is uniform there exists a curve $\gamma\subset (\R^2\setminus Q)$ connecting $x_i$ with $x_j$ so that for some $C>0$ (independent of the curve and of the points $x_j,x_i$), 
\begin{equation}\label{eq:chain_balls}
\ell(\gamma)\leq C \dist (B_{s(i)},B_{s(j)})\sim \ell(Q_i) \quad \text{and}\quad \dist (\gamma, Q)\geq C^{-1} \ell(Q_i). 
\end{equation}
 As a consequence we have that for some constant $C_1>0$
 $$\gamma \subset (Q+B(0,C_1\ell(Q_i)))\setminus  (Q+B(0, C^{-1}\ell(Q_i))) . $$
Without loss of generality, by considering $(\gamma \cap 3Q)\cup \partial 3Q$ we can assume that $\gamma \subset 3Q$ still satisfies the previous properties. However, it may happen that $\gamma\not\subset S$, that is, $\gamma$ intersects other obstacles of $S$. These obstacles take the form $\text{int}(3R)$ when $S=S_Q$, or $\text{int}(R)$ when $S=\mathcal S_{\mathbf a}\cap 3\overline{Q}$, for some  $R\in \bigcup_{m>n}\mathcal R_{\mathbf a,m}$. To avoid such situation we need to modify the curve $\gamma$ to obtain a new curve $\gamma'$. We only provide the proof for the case $S=S_Q$, as the other is analogous. If such an intersection occurs, we connect the entrance point $y_1 $ and exit point $y_2$ of $\gamma$ at the obstacle $3R$ using the shorter of the two connected components of $\partial 3R\setminus \{y_1,y_2\}$, denoted by $\gamma_R$. We let $\gamma_R=\emptyset$ whenever $\gamma$ does not enter $\text{int}(3R)$. Obviously, we have $\ell(\gamma\cap 3R)\geq |y_1-y_2|\geq\ell(\gamma_R)/\sqrt{2}$. By modifying $\gamma$ in this manner whenever it enters an obstacle $3R$, we obtain a new curve $\gamma'$ that satisfies the following properties
$$\ell(\gamma')=\ell(\gamma'\cap S)+\sum_{R\in \mathcal{R}_{\mathbf a, m},\, m>n}\ell(\gamma_R)  \leq \sqrt{2}\ell(\gamma\cap S)+\sqrt{2}\sum_{R\in \mathcal{R}_{\mathbf a,m},\, m>n}\ell(\gamma \cap\text{int}(3R))=\sqrt{2}\ell(\gamma)  $$
and
\begin{align*}
\dist (\gamma',Q)&=\min\{\dist(\gamma\cap S,Q),\inf_{R\in \mathcal{R}_{\mathbf a, m},\, m>n}\{\dist(\gamma_R,Q) \} \} \\
&\geq  \min\left\{\dist(\gamma\cap S,Q),\inf_{R\in \mathcal{R}_{\mathbf a, m},\, m>n,}\left\{\dfrac{1}{ 1+3\sqrt{2}}\dist(\gamma \cap 3R,Q) \right\} \right\}\\
&\geq\dfrac{1}{ 1+3\sqrt{2}}\dist(\gamma,Q)\geq \dfrac{1}{ 1+3\sqrt{2}}C^{-1}\ell(Q_i).
\end{align*}
In the second line we are using that if $3R\cap \gamma\neq \emptyset$ then $\dist(\gamma \cap 3R,Q)\leq \dist(\gamma_R,Q)+3\sqrt{2}\ell(R)$ and also that $\ell(R)\leq \dist(3R,Q)\leq \dist(\gamma_R,Q).$
For the last inequality we have used \eqref{eq:chain_balls}. We then have built a curve  $\gamma'$ from $B_{s(i)}$ to $B_{s(j)}$ such that for some constant $C_2>0$
\begin{equation}\label{eq:chain_balls_2}
\gamma' \subset (Q+B(0,C_2\ell(Q_i)))\setminus  (Q+B(0, C^{-1}\ell(Q_i))) . 
\end{equation}
This  is enough to know that if we let
$$C_{s(i),s(j)}=\{B\in\mathcal W:\, B\cap \gamma'\neq \emptyset\}$$
then, by \eqref{eq:chain_balls_2} we have $N:=\# C_{s(i),s(j)}\leq N_1 $. The chain of Whitney balls is now fully constructed.

\medskip

\noindent Going back to \eqref{eq:example_norm_bound} we now can estimate the term $|a_i-a_j|$.  Observe that for every $k=1,\dots, N$ we have $B_{i(k)}\cap B_{i(k-1)}\neq \emptyset$ so by $(\widetilde{\text{W2}})$ the inclusion $B_{i(k)}\cup B_{i(k-1)}\subset 4 B_{i(k)}\cap 4B_{i(k-1)}$ holds. Then

 \begin{equation}\label{eq:example_poinc_ineq}
 \begin{split}
  |a_i-a_j|\leq &\sum^{N}_{k=1}  \left|\fint_{B_{i(k-1)}\cap S} u(x)\,dx-\fint_{B_{i(k)}\cap S} u(x)\,dx\right|\\
         \leq& \sum^{N}_{k=1}   \left|\fint_{B_{i(k-1)}\cap S} u(x)\,dx-\fint_{4 B_{i(k)}\cap S} u(x)\,dx\right| +\left|\fint_{B_{i(k)}\cap S} u(x)\,dx-\fint_{4 B_{i(k)}\cap S} u(x)\,dx\right| \\
\leq & \sum^{N}_{k=1}  \frac{1}{|B_{i(k-1)}\cap S|}\int_{4 B_{i(k)}\cap S} \left| u(x)- \left(\fint_{4B_{i(k)}\cap S} u(y)\,dy \right) \right|\, dx \\
&+  \sum^{N}_{k=1}  \frac{1}{|B_{i(k)}\cap S|}\int_{4 B_{i(k)}\cap S} \left| u(x)- \left( \fint_{4 B_{i(k)}\cap S} u(y)\,dy \right) \right|\, dx \\
\leq & C(C_a) \sum^{N}_{k=1}  \frac{1}{|4 B_{i(k)}\cap S|}\int_{4 B_{i(k)}\cap S} \left| u(x)- \left( \fint_{4B_{i(k)}\cap S} u(y)\,dy \right) \right|\, dx\\
\leq & C(C_a, C_p) \sum^{N}_{ k=1} \ell(B_i)^{-1}\int_{4\lambda_p  B_{i(k)}\cap S} \rho_u(x)\,dx, 
   \end{split}  
 \end{equation}
 where in the last line we are using the weak $(1,1)$-Poincar\'e inequality. Since $\ell(B_i)\sim \ell (Q_i)$ and applying \eqref{eq:example_norm_bound} and \eqref{eq:example_poinc_ineq} we can write 
$$\|\nabla Tu\|_{L^1(Q_i)}\leq 8\sum_{Q_j\cap Q_i\neq \emptyset} \ell(Q_i)|a_j-a_i|\leq  C(C_a,C_p) \sum_{Q_j\cap Q_i\neq \emptyset} \sum_{B\in C_{s(i),s(j)}} \int_{4\lambda_p  B\cap S} |\rho_u(x)|\, dx .$$
Since $\#\{j: Q_j\cap Q_i\neq \emptyset\}\leq C_3 $  and recalling that if $Q_i\cap Q_j\neq \emptyset$ we have $\# (C_{s(i),s(j)})\leq N_1 $ we get
$$\|\nabla Tu\|_{L^1(Q_i)}\leq  C(C_a,C_p)  \sum_{\{B_j\in\widetilde{\mathcal W}:\, \#(C_{s(i),j})\leq N_1\}} \int_{4\lambda_p  B_{j}\cap S} |\rho_u(x)|\, dx . $$
Finally, by changing the order of summation we have 
\begin{equation}\label{eq:bounded_W^1,1_ope.}
\begin{split}
    \|\nabla Tu\|_{L^1(Q)}&=\sum_{i\geq 1}\|\nabla Tu\|_{L^1(Q_i)}\leq \sum_{i\geq 1} C(C_a,C_p)\sum_{\{B_j\in\widetilde{\mathcal W}:\, \#(C_{s(i),j})\leq N_1\}} \int_{4\lambda_p  B_{j}\cap S} |\rho_u(x)|\, dx \\
    &=C(C_a,C_p)\sum_{j\geq 1} \sum_{\{i:\, \# C_{s(i),j}\leq N_1\}} \int_{4\lambda_p  B_j\cap S}|\rho_u(x)|\, dx.
\end{split}
\end{equation}
Given the existence of a constant $N_2>0$ such that $\# \{ i: \# C_{s(i),j}\leq N_1 \} \leq N_2$ for all $j\in\mathbb{N}$, the argument concludes as follows
$$ \|\nabla Tu\|_{L^1(Q)}\leq C(C_a,C_p) N_2\sum_{j\geq 1} \int_{4\lambda_p B_j\cap S} |\rho_u(x)|\,dx\leq  C(C_a,C_p,\lambda_p)  \|\rho_u\|_{L^{1}(S)}.$$
In the last line, by applying the properties of the Whitney decomposition, we also use that for every $i\in\N$,  $\#\{j\in\N:\,4\lambda_p  B_j\cap 4\lambda_p B_i\neq \emptyset\}\leq N_3$ for another absolute constant $N_3>0$.

\medskip

Thanks to (a) and (b) (namely \eqref{eq:bounded_L^1_ope.} and \eqref{eq:bounded_W^1,1_ope.}) we have finally proved that $T\colon W^{1,1}(S)\to W^{1,1}(Q)$ is a bounded operator.

\medskip

To end the proof let us show that in the case $u\in \Lip(S)\cap W^{1,1}(S)$ we have 
\begin{equation}\label{eq:trace}
\lim_{y\to x,\, y\in Q}Tu(y)=u(x)\quad \text{for all}\; x\in \partial Q.
\end{equation}
In particular, this means that  $Tu$, which is smooth on $Q$, extends continuously to the boundary $\partial Q$ where it equals $u|_{\partial Q}$. To do so, let us fix $x\in \partial Q$ and $y\in Q$ and call $L\geq 0$ the Lipschitz constant of $u$ on $S$. Then
\begin{align*}
    |Tu(y)-u(x)|&=\left| \sum_{i\geq 1} \varphi_i(y) a_i-u(x) \right|\leq \sum_{i\geq 1}\varphi_i(y)|a_i-u(x)| \leq \sum_{i \geq 1} \left( \fint_{B_{s(i)}\cap S}|u(z)-u(x)|\,dz  \right) \\
    &\leq \sum_{i\geq 1}\varphi_i(y)L\fint_{B_{s(i)}\cap S}|z-x|\, dz.
\end{align*}
Note that $\varphi_i(y)\neq 0$ whenever $y\in (5/4)Q_i$ so, in particular by (W3), $\dist(y, \partial Q)\sim \ell(Q_i)$. Then 
\begin{align*}
\dist(x, B_{s(i)\cap S})&\leq |x-y|+\dist(y, B_{s(i)}\cap S)\leq |x-y|+\dist(Q_i, B_{s(i)\cap S})+2\ell(Q_i)\\
&\lesssim |x-y|+ C\ell(Q_i)  \lesssim |x-y| + \dist(y,\partial Q) \leq |x-y|.
\end{align*}
We can then conclude that \eqref{eq:trace} holds because 
$$|Tu(y)-u(x)|\leq  \sum_{i\geq 1}\varphi_i(y)L\fint_{B_{s(i)}\cap S}|z-x|\, dz\lesssim   \sum_{i\geq 1}\varphi_i(y)L|y-x|=L|x-y|.$$

\end{proof}

\begin{lemma}\label{lem:sierpinski_ext_op._2}
    There exists a linear continuous operator $T\colon W^{1,1}(\mathcal S_{\mathbf a})\to W^{1,1}(\R^2\setminus [0,1]^2)$ whose operator norm only depends on the Ahlfors regularity constant and the Poincaré constants of $\mathcal S_{\mathbf a}$.  Moreover, for every $u\in W^{1,1}(\mathcal S_{\mathbf a})\cap \Lip(\mathcal S_{\mathbf a})$ and every $x\in\partial [0,1]^2$ we have that
    \begin{equation}\label{eq:cont_up_to_boundary_2}
\lim_{y\to x,\, y\notin [0,1]^2}Tu(y)=u(x).  
    \end{equation}

\end{lemma}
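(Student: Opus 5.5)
The plan is to repeat the Whitney-type construction of Lemma \ref{lem:sierpinski_ext_op.}, now with the role of the open square $Q$ played by the unbounded open set $U:=\R^2\setminus[0,1]^2$. The role of $S$ is played by the whole carpet $\mathcal S_{\mathbf a}$, which is Ahlfors regular and supports a weak $(1,1)$-Poincar\'e inequality since $\mathbf a\in\ell_1$ (by \cite[Theorem 1.5]{Sierpinski} and \cite[Proposition 3.1]{Sierpinski}).

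\textbf{Step 1: handle unboundedness.} Since $U$ is unbounded and $\mathcal S_{\mathbf a}$ is bounded, I first reduce to a bounded region. Take the Whitney decomposition $\mathcal W=\{Q_i\}$ of $U$ as in (W1)--(W4). Only cubes with $\ell(Q_i)\le c_0$, for a small absolute $c_0$ (say $c_0=a_1/100$), will receive averages over carpet balls. Larger cubes receive the value $0$, or better, a cutoff: the extension is $\eta\cdot \sum_i\varphi_i a_i$, where $\eta$ is a smooth cutoff equal to $1$ near $[0,1]^2$ and supported in $[-1,2]^2$, with $|\nabla\eta|\lesssim 1$.

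\textbf{Step 2: define the operator.} Build the metric Whitney covering $\widetilde{\mathcal W}=\{B_k\}$ of $\mathcal S_{\mathbf a}\setminus\partial[0,1]^2$ with radii $r(z)=\dist(z,U)/(10\lambda_p)$, exactly as in that proof. For each small cube $Q_i$, choose $B_{s(i)}$ with $\dist(B_{s(i)},Q_i)\sim\ell(Q_i)\sim r_{s(i)}$. This is possible because near every boundary point of $[0,1]^2$ the carpet has Whitney balls of every small scale: $\partial[0,1]^2\subset\mathcal S_{\mathbf a}$, and the carpet is Ahlfors regular. Set $a_i=\fint_{B_{s(i)}\cap\mathcal S_{\mathbf a}}u$ and $Tu=\eta\sum_i\varphi_i a_i$.

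\textbf{Step 3: bound the norms.} The $L^1$ bound follows exactly as in \eqref{eq:bounded_L^1_ope.}, using bounded overlap and the bounded multiplicity of $s$. For the gradient, write $\nabla Tu=\eta\nabla(\sum\varphi_ia_i)+(\sum\varphi_ia_i)\nabla\eta$. The second term is bounded by the $L^1$ estimate. The first term is treated as in \eqref{eq:example_norm_bound}--\eqref{eq:bounded_W^1,1_ope.}: for adjacent $Q_i,Q_j$, bound $|a_i-a_j|$ by a chain of boundedly many Whitney balls, and apply the Poincar\'e inequality on $4\lambda_pB$. There is one extra boundary term: adjacent cubes straddling the threshold $c_0$. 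For these, $|a_i|\ell(Q_i)$ is controlled by $\|u\|_{L^1}$, since the balls involved have measure $\sim c_0^2$ and the number of such cubes is bounded.

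\textbf{Step 4: construct the chains.} Building the chains is the main obstacle, and it is handled as before. Since $\R^2\setminus U=[0,1]^2$ is convex, hence uniform, join $x_i,x_j$ by a curve $\gamma$ in $[0,1]^2$ (the complement side) with $\ell(\gamma)\lesssim\ell(Q_i)$ and $\dist(\gamma,U)\gtrsim\ell(Q_i)$. Near the corners one takes $\gamma$ through the interior of the square. Then reroute $\gamma$ around each removed square $R$ it enters, along the shorter arc of $\partial R$. The same estimate shows that the new curve $\gamma'$ stays in the annular region $\{c\,\ell(Q_i)\le\dist(\cdot,U)\le C\ell(Q_i)\}$ and lies in $\mathcal S_{\mathbf a}$, using $\ell(R)\lesssim\dist(R,U)$ for every removed $R$. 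One should double-check this for first-generation squares when $a_1$ is large: these are at distance $\ge a_1$ from $\partial[0,1]^2$, which is fine at the small scales $\ell(Q_i)\le c_0$. This bounds the chain length uniformly.

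\textbf{Step 5: boundary limit.} For $u$ Lipschitz, the limit \eqref{eq:cont_up_to_boundary_2} follows verbatim from the final computation in Lemma \ref{lem:sierpinski_ext_op.}. Near $\partial[0,1]^2$ we have $\eta=1$, and $\dist(x,B_{s(i)})\lesssim|x-y|$ whenever $\varphi_i(y)\ne0$, which gives $|Tu(y)-u(x)|\lesssim L|x-y|$.
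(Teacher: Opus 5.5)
Your proposal is correct and is essentially the paper's argument: the Whitney-type averaging operator of Lemma \ref{lem:sierpinski_ext_op.} with $\R^2\setminus[0,1]^2$ playing the role of $Q$, averages assigned only to the Whitney cubes near $[0,1]^2$ that have an associated carpet ball, a cutoff to handle unboundedness, and the same Lipschitz computation for the boundary limit. Your explicit treatment of the cubes straddling the threshold $c_0$ and of the chains fills in details that the paper's short sketch leaves implicit. For the chains, you reroute around every removed square using $\ell(R)\le\dist(R,\partial[0,1]^2)$, which holds in every generation, so $c_0$ may in fact be taken absolute rather than $a_1/100$.
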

\begin{proof}
 The proof follows the same structure as the one of Lemma \ref{lem:sierpinski_ext_op.}, where $\R^2\setminus [0,1]^2$ plays the role of $Q$. In a similar way as before, we define Whitney decompositions $\{ B_i \}_{i\geq 1}$ and $\{ Q_i \}_{i\geq 1}$ of $\mathcal S_{\mathbf a}\setminus \partial [0,1]^2$ and $\R^2\setminus [0,1]^2$ respectively. We choose a partition of unity $\left\{ \varphi_{i} \right\}_{ i \in \N }$ on $\R^2\setminus [0,1]^2$ subordinate to the open cover $\{ (5/4) Q_i\}_{i\geq 1}$  and so that  $|\nabla \varphi_i|\leq 4\ell(Q_i)^{-1}$. For every $Q_i\in \mathcal W$ we choose $B_{s(i)}\in \widetilde W$ so that
 $\dist(B_{s(i)}, Q_i)\sim \ell(Q_i)\sim \ell (B_{s(i)}$. Call 
 \begin{equation}\label{eq:support_Tu_otside_[0,1]^2}
 U=\bigcup_{i\in\N} \{Q_i:\, \text{there exists an associated $B_{s(i)}$}   \}\subset \R^2\setminus [0,1]^2.
 \end{equation}
We define for every $u\in W^{1,1}(\mathcal S_{\mathbf a})$ the extension $$Tu(x)=
    \sum_{i\geq 1}\varphi_i(x)a_{i},\quad x\in U.$$
Following the same arguments as in Lemma \ref{lem:sierpinski_ext_op.} we can prove that for all $u\in W^{1,1}(\mathcal S_{\mathbf a})$ 
$$\|Tu\|_{W^{1,1}(U)}\leq C(C_a,C_p,\lambda_p) \|u\|_{W^{1,1}(\mathcal S_{\mathbf a})}.$$
Multiplying $Tu$ by a Lipschitz cut-off function $\psi$ that equals one on $[0,1]^2$ and zero on $U^c$, we get $\psi Tu\in W^{1,1}(\R^2\setminus [0,1]^2)$ and the Sobolev norm estimate is preserved. On the other hand, proving \eqref{eq:cont_up_to_boundary_2} follows again the same steps as in Lemma \ref{lem:sierpinski_ext_op.}.
\end{proof}

We are now ready to prove Theorem \ref{thm:Sierp-W11_ext.}, followed by the proof of Theorem \ref{thm:Sierp-W11_ext._char}.

\begin{proof}[Proof of Theorem \ref{thm:Sierp-W11_ext.}]
    First, note that $  \mathcal S_{\mathbf a} $ is a compact set which has the weak $(1,1)$-Poincaré inequality and is Ahlfors regular by \cite{EG21}. Let $(C_p,\lambda_p)\in (0,\infty)\times[1,\infty)$ and $C_a>0$ be the  Poincaré constants and the Ahlfors regularity constant  respectively.

\medskip

In order to define the extension operator $T\colon W^{1,1}(\mathcal S_{\mathbf a})\to W^{1,1}(\R^2)$ observe that by the density of Lipschitz functions on $W^{1,1}(\mathcal S_{\mathbf a})$ (see \cite[Theorem 8.2.1]{HKST15} it is enough to find an extension operator $T\colon W^{1,1}(\mathcal S_{\mathbf a})\cap \Lip(\mathcal S_{\mathbf a})\to W^{1,1}(\R^2)$.
Moreover, since we can write
    $$\R^2\setminus \mathcal S_{\mathbf a}= (\R^2\setminus [0,1]^2)\cup \bigcup_{n\in\N}\bigcup_{Q\in \mathcal R_{\mathbf a,n}} Q$$
for any given $u\in W^{1,1}(\mathcal S_{\mathbf a})$ we only need to define $Tu$ on $\R^2\setminus [0,1]^2 $ and on all squares $Q\in \mathcal R_{\mathbf a,n}$ for every $n\in\N$. Call  first 
$$T_0\colon W^{1,1}(\mathcal S_{\mathbf a})\to W^{1,1}(\R^2\setminus [0,1]^2)$$
the bounded linear operator provided by Lemma \ref{lem:sierpinski_ext_op._2}. Next, we distinguish two types of squares $Q$ to be filled. Let us first fix $k_{\mathbf a}\in\N$ satisfying \eqref{eq:Sierpinki_1} and take 
$Q\in  \mathcal R_{\mathbf a,n}$ with $n<k_{\mathbf a}$. By Lemma \ref{lem:sierpinski_ext_op.} $(i)$ there is a bounded linear operator 
$$\widetilde T_Q\colon W^{1,1}(\mathcal S_{\mathbf a}\cap 3\overline{Q})\to W^{1,1}\left(Q\right). $$
On the other hand, for every $n\geq k_{\mathbf a}$
and every $Q\in\mathcal R_{\mathbf a,n}$, recalling the definition of $S_Q$ given in \eqref{eq:Sierpinski_2}, we  use  Lemma \ref{lem:sierpinski_ext_op.} $(ii)$ to define extension operators 
$$T_{Q}\colon W^{1,1}(S_Q)\to W^{1,1}( Q).  $$
All operators $T_0$, $T_Q$ and $\widetilde T_{Q}$ are norm-bounded by some constant $C>0$, only depending on $C_p$, $\lambda_p$,  $C_a$ and $\mathbf a$, and also satisfying \eqref{eq:cont_up_to_boundary} or \eqref{eq:cont_up_to_boundary_2}. We finally define $T\colon W^{1,1}(\mathcal S_{\mathbf a})\cap \Lip(\mathcal S_{\mathbf a}) \to W^{1,1}(\mathbb R^2)$ as
$$Tu(x)=\begin{cases}
u(x), & \text{if $x\in \mathcal S_{\mathbf a}$}\\
T_0(u) (x), & \text{if $x\notin [0,1]^2$}\\
\widetilde T_Q (u|_{\mathcal S_{\mathbf a}\cap 3\overline{Q}})(x), &\text{if}\; x\in Q \;\text{for some $Q\in \mathcal R_{\mathbf a,n}$ and some $n<k_{\mathbf a}$}\\
T_{Q}(u|_{S_Q})(x),&\text{if}\; x\in Q \;\text{for some $Q\in \mathcal R_{\mathbf a,n}$ and some $n\geq k_{\mathbf a}$.} 
\end{cases}$$
Recall that $S_Q\cap S_{Q'}=\emptyset$ whenever $Q,Q'\in  \mathcal R_{\mathbf a,n}$, $n\geq k_{\mathbf a}$ and $Q\neq Q'$.  This implies that we have a disjoint union
$$\bigcup_{n\geq k_{\mathbf a}}\bigcup_{Q\in \mathcal R_{\mathbf a,n}} S_Q\subset \mathcal S_{\mathbf a}.$$
Moreover $\#\{Q\in \mathcal R_{\mathbf a,n}: 1\leq n\leq k_{\mathbf a}-1\} \leq N_{\mathbf a}$ has a finite number of elements, therefore
\begin{equation}\label{eq:norm-bound-ext-operator}
\begin{split}
    \| Tu\|_{W^{1,1}(\R^2\setminus \mathcal S_{\mathbf a})}&= \|T_0(u)\|_{W^{1,1}(\R^2\setminus [0,1]^2)}+\sum^{k_{\mathbf a}-1}_{n=1}\sum_{Q\in \mathcal R_{\mathbf a,n}}\|Tu\|_{W^{1,1}(Q)} + \sum_{n\geq k_{\mathbf a}}\sum_{Q\in \mathcal R_{\mathbf a,n}}\|Tu\|_{W^{1,1}(Q)} \\
   &\leq C \|u\|_{W^{1,1}(\mathcal S_{\mathbf a})}+ \sum^{k_{\mathbf a}-1}_{n=1}\sum_{Q\in \mathcal R_{\mathbf a,n}}\|\widetilde T_Q (u|_{\mathcal S_{\mathbf a}\cap 3\overline{Q}})\|_{W^{1,1}(Q)} + \sum_{n\geq k_{\mathbf a}}\sum_{Q\in \mathcal R_{\mathbf a,n}}\|T_{Q}(u|_{S_Q})\|_{W^{1,1}(Q)}\\
    &\leq  C \|u\|_{W^{1,1}(\mathcal S_{\mathbf a})}+  N_{\mathbf a} C \|u\|_{W^{1,1}(\mathcal S_{\mathbf a})}+ \sum_{n\geq k_{\mathbf a}}\sum_{Q\in \mathcal R_{\mathbf a,n}}C\|u\|_{W^{1,1}(S_Q)}\\
    &\leq C(2+N_{\mathbf a}) \|u\|_{W^{1,1}(\mathcal S_{\mathbf a})}.
    \end{split}
\end{equation}
In particular we deduce that $Tu\in L^{1}(\R^2)$. 

\medskip

We next prove that $Tu\in W^{1,1}(\R^2)$. To do so we will prove that $Tu$ is absolutely continuous on $\mathcal H^1$-almost every line segment parallel to the coordinate axes, and that those partial derivatives that exist almost everywhere are $L^1$-integrable. Then, using \cite[Theorem 4.21]{EG2015} we will conclude that $Tu\in W^{1,1}(\R^2)$. To prove the absolute continuity, we will only look at vertical line segments, and the horizontal lines case follows then by symmetry.

Namely, our goal is to prove that for $\mathcal H^1$-almost every $t\in\R$ and every $[a,b]\subset \R$,
\begin{equation}\label{eq:abs_cont_a.e._line}
|Tu(t,b)-Tu(t,a)|\leq \int_{\{t\}\times [a,b] } g\, ds, \quad \text{where}\;\; g(x)=\begin{cases}\rho_u(x) &\text{if}\; x\in \mathcal S_{\mathbf a}\\
|\nabla Tu(x)| &\text{if}\;   x\in \R^2\setminus \mathcal S_{\mathbf a}  \end{cases}.
\end{equation}
Once we prove this, it is easy to check that for $\mathcal H^1$-almost every $t\in \R$ the {\em classical} partial derivative  $(\partial (Tu)/\partial x)(t,x)$, that exist on $\mathcal H^1$-almost every point $x\in\R$, belongs to $L^1(\R^2)$. Indeed, using that $Tu=0$ outside the bounded set $U$ defined on \eqref{eq:support_Tu_otside_[0,1]^2} there must exist $l>0$ such that  $U\subset [-l,l]^2$, and by Fubini's theorem
\begin{align*}
\left\| \frac{\partial (Tu)}{\partial x}\right\|_{L^1(\R^2)}&=\int^{l}_{-l} \int^{l}_{-l}\dfrac{\partial (Tu)}{\partial x}(t,s) \, dt\, ds=  \int^{l}_{-l} |Tu(t,l)-Tu(t,-l)|\, dt     \\
&\leq   \int^{l}_{-l} \int^{l}_{-l}g(t,s)\, dt\, ds\leq \|g\|_{L^1(\R^2)}<\infty. 
\end{align*}
It is hence clear that we only need to prove \eqref{eq:abs_cont_a.e._line} to get that $Tu\in W^{1,1}(\R^2)$. In the case $t\notin [0,1]$ we have $u=Tu$ on $\{t\}\times\R$ and the absolutely continuity of $Tu$ on $\R^2\setminus [0,1]^2$ gives the claim. So we may assume that $t\in [0,1]$. In what follows, for explanatory reasons, let us name $Q_0=\R^2\setminus [0,1]^2$ and refer to it as a {\em square}.

\medskip

 We start by recalling that, being  $\rho_u\in L^1(\mathcal S_{\mathbf a})$  a weak upper gradient of $u\in W^{1,1}(\mathcal S_{\mathbf a})\cap\Lip(\mathcal S_{\mathbf a})$, we know that for every $t\in [0,1]$ and every segment $\{t\}\times [a,b]\subset \mathcal S_{\mathbf a}$ we have 
\begin{equation}\label{eq:abs_cont_u^*}
|u(t,a)-u(t,b)|\leq \int_{\{t\}\times [a,b]} \rho_u \, ds. 
\end{equation}
For those vertical lines segments $\{t\}\times [a,b]\subset \{t\}\times\R$ that intersect infinitely many squares $Q\in \bigcup_{n\in\N}\mathcal R_{\mathbf a,n}$, we will prove that they  must form an $\mathcal H^1$-null set. To be precise, let $t\in [0,1]$ and
 $$F_t= \left\{Q\in \{Q_0\}\cup\bigcup_{n\in\N}\mathcal R_{\mathbf a,n}:\, (\{t\}\times [0,1])\cap Q\neq \emptyset \right\}. $$
If we call $D=\{t\in [0,1]:\, \# (F_t)=\infty \}$ then we want to show that $\mathcal H^1(D)=0$. To do so we start by denoting $\widetilde Q_i= \bigcup_{Q\in \mathcal R_{\mathbf a, i}} Q$. We have that 
 $$D\subset \bigcup_{i\geq n}\pi_1(\widetilde Q_i) \quad \text{for every $n\in\N$}.$$
Since $\mathbf a \in \ell_1$ we have $\sum^{\infty}_{j=1} \log(1-a_j)>-\infty$. This means that for any $\varepsilon>0$, there exists $n_0\in\N$ so that $\sum^{\infty}_{j=n_0}\log(1-a_j)\in(-\varepsilon,0).$ Hence
$$\mathcal H^1\left([0,1]\setminus \bigcup_{i\geq n_0}\pi_1(\widetilde Q_i)\right)= \prod^{\infty}_{j=n_0}(1-a_j)=e^{\sum^{\infty}_{j=n_0}\log(1-a_j)}\in (e^{-\varepsilon},1).$$
This means that 
$$\lim_{n\to\infty}\mathcal H^1\left([0,1]\setminus \bigcup_{i\geq n}\pi_1(\widetilde Q_i)\right)=1$$ which leads to
 $\mathcal H^1([0,1]\setminus D)=  1 $. Therefore $\mathcal H^1(D)=0$ as we wanted. This  fact together with \eqref{eq:abs_cont_u^*}, yields that for $\mathcal H^1$-almost every $t\in [0,1]$ we have  $F_t=\{Q^1,\dots, Q^{m}\}\subset\{Q_0\}\cup\bigcup_{n\in\N}\mathcal R_{\mathbf a,n} $, and whenever $F_t=\emptyset$ (that is $\{t\}\times [a,b]\subset \mathcal S_{\mathbf a} $)  the function $u|_{\{t\}\times [a,b]}$ is absolutely continuous. Naming the projection into the $y$-axis
$$\pi_2(\{t\}\times [a,b]\cap Q^i)=(a_i,b_i),\quad \forall i=1,\dots m$$
and setting $b_0=a$ and $a_{m+1}=b$, we may write
\begin{align}\label{eq:abs_cont_1}
    |Tu(t,b)-Tu(t,a)|\leq \sum^{m}_{i=1}|Tu(t,b_i)-Tu(a_i)| +\sum^{m}_{i=0} |Tu(t,a_{i+1})-Tu(t,b_i)|,
\end{align}
where 
$$\bigcup^{m}_{i=0}\{t\}\times [b_i,a_{i+1}]\subset \mathcal S_{\mathbf a} .$$
Since $u$ is absolutely continuous on each of those segments we have
\begin{equation}\label{eq:abs_cont_2}
   |Tu(t,a_{i+1})-Tu(t,b_i)|=|u(t,a_{i+1})-u(t,b_i)|  \leq  \int_{\{t\}\times [b_i,a_{i+1}]}\rho_u \, ds,\quad \forall i=1,\dots, m.
\end{equation}
Moreover, on every $Q\in\{Q^1,\dots Q^m\}$ we have $Tu\in W^{1,1}(Q)$. Since $Tu$  extends continuously to the closure  $\overline Q$ by using \eqref{eq:cont_up_to_boundary} and \eqref{eq:cont_up_to_boundary_2} from Lemmas \ref{lem:sierpinski_ext_op.} and \ref{lem:sierpinski_ext_op._2},  we can say that  $Tu$ is also absolutely continuous on $\mathcal H^1$-almost every vertical line segment contained in $\overline{Q}$. This leads to
\begin{equation}\label{eq:abs_cont_3}
|Tu(t,b_i)-Tu(a_i)| \leq \int_{\{t\}\times [a_i,b_i]}|\nabla (Tu)|\, ds ,\quad \forall i=1,\dots, m.
\end{equation}
By joining \eqref{eq:abs_cont_2} and \eqref{eq:abs_cont_3}, and going back to \eqref{eq:abs_cont_1} we have
$$|Tu(t,b)-Tu(t,a)|\leq \int_{\{t\}\times [0,1] } g\, ds.$$
Recall that $g$ was defined as $\rho_u$ on $\mathcal S_{\mathbf a}$ and as $|\nabla Tu|$ on $[0,1]^2\setminus \mathcal S_{\mathbf a}$ and that $g\in L^1(\R^2)$. This concludes the proof that $Tu\in W^{1,1}(\R^2)$.

\medskip

Finally, once we have proved that $ Tu\in W^{1,1}(\R^2)$, using \eqref{eq:norm-bound-ext-operator}, it is easy to check that $T$ is norm bounded:
\begin{align*}
    \| Tu\|_{W^{1,1}(\R^2)}=\|u\|_{W^{1,1}(\mathcal S_{\mathbf a})}+\|Tu\|_{W^{1,1}(\R^2\setminus \mathcal S_{\mathbf a})}\leq (1+C(1+N_{\mathbf a})) \|u\|_{W^{1,1}(\mathcal S_{\mathbf a})}.
\end{align*}

\end{proof}

\begin{proof}[Proof of Theorem \ref{thm:Sierp-W11_ext._char}]
   One implication is given by Theorem \ref{thm:Sierp-W11_ext.}, together with \cite[Theorem 1.5]{Sierpinski}.

   For the other one we will argue by contradiction and we will assume that $\mathcal S_{\mathbf a}$ does not satisfy a weak $(1,1)$-Poincaré inequality. That is, by using \cite[Theorem 1.5]{Sierpinski}, we have $\mathbf a\notin\ell_1$. In order to prove that $\mathcal S_{\mathbf a}$ is not a $W^{1,1}$-extension set, thanks to \cite[Proposition 3.4]{CKLR25}, it is enough to show instead that $\mathcal S_\mathbf a$ is not a $BV$-extension set. For the later we will make use of Proposition \ref{prop:BV_ext_per_full_norm}, characterizing the $BV$-extension  property through the extension of  sets of finite perimeter with full norm. For this, consider for every $k\in\N$ the sets 
$$F_k=\mathcal S_{\mathbf a}\cap ([0,1]\times [0, a_1\cdots a_{k}/2]).$$ 
Note that, by the definition of perimeter $P_{S_{\mathbf a}}(F_k)=|D\chi_{F_k}|_{\mathcal S_{\mathbf a}}(\mathcal S_{\mathbf a})$, it is easy to check that
$$ P_{S_{\mathbf a}}(F_k)\leq \mathcal H^1\left(\mathcal{S}_{\mathbf a}\cap \left(\left\{[0,1]\times\frac{a_1\dots a_{k}}{2}\right\}   \right).  \right) $$
Using that $\mathbf a\notin \ell_1$ we can prove that the right hand side vanishes. Indeed, fix $k\geq 1$ and note that $$\mathcal H^1\left(\mathcal{S}_{\mathbf a}\cap \left(\left\{[0,1]\times \frac{a_1\cdots a_{k}}{2}\right\}  \right)  \right)=\displaystyle{\lim_{n\to\infty} }\dfrac{1}{a_1\cdots a_{k}}\prod^{n}_{j=k+1}(1-a_j).$$ By calling $x_n:=\prod^{n}_{j=k+1}(1-a_j)>0$,
$$ \lim_{n \to \infty} x_n > 0 \quad \Leftrightarrow \quad \lim_{n \to \infty} \log\bigg(\prod_{j =k+1}^n (1-a_j)\bigg) > -  \infty  \quad \Leftrightarrow \quad  \lim_{n \to \infty} \sum_{j = k+1}^n\big( - \log(1-a_j)\big) < + \infty. $$ 
Since the convergence of the series $\sum_{j\geq k+1} -\log(1-a_j)$ is equivalent to the convergence of $\sum_{j\geq k+1}a_j$, which only occurs when $\mathbf a\in\ell_1$, we conclude that $\displaystyle{\lim_{n\to\infty}}x_n=0$, and hence $P_
{\mathcal S_{\mathbf a}}(F_k)=0$ for every $k\in\N$.

On the other hand, $|F_k|\leq (a_1\dots, a_{n_k})/2$, so $|F_k|\to 0$ as $k\to\infty$. 
Let $\widetilde F_k$ be any extensions of the sets $F_k$. Then, by the Euclidean isoperimetric inequality in $\R^2$ (see \cite[Theorem 5.11]{EG2015}) we have\footnote{Note, that according to the notation of this paper, we have $P_{\R^2}(\widetilde F_k)=|D_{\chi_{\widetilde F_k}}|_{\R^2}(\R^2)=|D_{\chi_{\widetilde F_k}}|(\R^2)=\|D_{\chi_{\widetilde F_k}}\|(\R^2)$.}
\[
|\widetilde F_k| + P_{\R^2}(\widetilde F_k) \ge P_{\R^2}(\widetilde F_k) \ge C \sqrt{|\widetilde F_k|} \ge C \sqrt{| F_k|} = \frac{C}{\sqrt{|F_k|}} (|F_k| + P_
{\mathcal S_{\mathbf a}}(F_k)),
\]
where $C/\sqrt{|F_k|} \to \infty$ as $k \to \infty$. This shows, according to Proposition \ref{prop:BV_ext_per_full_norm}, that $\mathcal S_{\mathbf a}$ cannot be a $BV$-extension set.

\end{proof}

We conclude this section by presenting two examples of both a closed and an open planar sets that act as $W^{1,p}$-extension sets for all $p>1$, but fail to satisfy the $W^{1,1}$-extension property\footnote{The contrary may also occur; that is, there exist open sets that are $W^{1,1}$-extension sets which are not $W^{1,p}$-extension sets for any $p>1$ (consider, e.g., the inward cusps). }.
For the case of open subsets $\Omega\subset\R^2$ similar examples can be constructed using the results on \cite{Koskela1999}. Note that, as explained in \cite{KRZ25}, any planar domain which is a $W^{1,p}$-extension domain for all $p>1$ but fails to be a $W^{1,1}$-extension domain, cannot be simply connected.

\begin{proposition}\label{prop:W1p-ext-closed-NOT-W11}
    Let $\mathbf a\in \ell_2\setminus \ell_1$. Then $\mathcal S_{\mathbf a}$ is a closed $W^{1,p}$-extension set for $p>1$, but it is not a $W^{1,1}$-extension set, nor a $BV$-extension set.
\end{proposition}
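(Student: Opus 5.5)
The statement splits into two parts: the positive claim for $p>1$, and the negative claim for $p=1$ and for $BV$.

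\emph{Positive part.} I would deduce the $W^{1,p}$-extension property for every $p>1$ from Proposition \ref{prop:plargerthan1}, applied with $X=\R^2$. The plane with Lebesgue measure is a $(1,p)$-PI space for every $p$, and $\mathcal S_{\mathbf a}$ is a closed, hence measurable, subset. Two hypotheses have to be checked.
\begin{enumerate}
\item The measure density condition. Since $\mathbf a\in\ell_2$, \cite[Proposition 3.1]{Sierpinski} gives that $\mathcal S_{\mathbf a}$ is Ahlfors $2$-regular. As $\mathcal S_{\mathbf a}$ is bounded, this is equivalent to the measure density condition, as observed in the introduction.
\item A weak $(1,p)$-Poincaré inequality. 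This follows from \cite[Theorem 1.6]{Sierpinski}: for $\mathbf a\in\ell_2$, the carpet $\mathcal S_{\mathbf a}$ supports a weak $(1,p)$-Poincaré inequality for every $p>1$. The inequality holds at all scales, since $\mathcal S_{\mathbf a}$ is bounded.
\end{enumerate}
Proposition \ref{prop:plargerthan1} then yields that $\mathcal S_{\mathbf a}$ is a $W^{1,p}$-extension set for all $1<p<\infty$.

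\emph{Negative part.} Since $\mathbf a\notin\ell_1$, \cite[Theorem 1.5]{Sierpinski} shows that $\mathcal S_{\mathbf a}$ fails the weak $(1,1)$-Poincaré inequality. Theorem \ref{thm:Sierp-W11_ext._char} then immediately gives that $\mathcal S_{\mathbf a}$ is not a $W^{1,1}$-extension set.

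The failure of the $BV$-extension property is the content of the second half of the proof of Theorem \ref{thm:Sierp-W11_ext._char}, and I would repeat that argument.
\begin{enumerate}
\item Take the horizontal strips $F_k=\mathcal S_{\mathbf a}\cap([0,1]\times[0,a_1\cdots a_k/2])$.
\item Their relative perimeter satisfies
\[
P_{\mathcal S_{\mathbf a}}(F_k)\le \mathcal H^1\bigl(\mathcal S_{\mathbf a}\cap([0,1]\times\{a_1\cdots a_k/2\})\bigr).
\]
The right-hand side is bounded by a constant multiple of $\lim_n\prod_{j=k+1}^n(1-a_j)$, which vanishes because $\sum a_j=\infty$. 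Hence $P_{\mathcal S_{\mathbf a}}(F_k)=0$.
\item Since $\mathbf a\in\ell_2$, the carpet is Ahlfors regular, so $|F_k|>0$; on the other hand $|F_k|\to0$.
\item Let $\widetilde F_k$ be any extension satisfying (PE1). Then $|\widetilde F_k|\ge|F_k|$, and the Euclidean isoperimetric inequality gives
\[
P_{\R^2}(\widetilde F_k)\gtrsim\sqrt{|F_k|}=\frac{1}{\sqrt{|F_k|}}\bigl(|F_k|+P_{\mathcal S_{\mathbf a}}(F_k)\bigr).
\]
\end{enumerate}
The factor $1/\sqrt{|F_k|}$ blows up as $k\to\infty$, so (PE2) fails for every constant $C$. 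Proposition \ref{prop:BV_ext_per_full_norm} therefore shows that $\mathcal S_{\mathbf a}$ is not a $BV$-extension set. The $W^{1,1}$ failure also follows independently from this, by \cite[Proposition 3.4]{CKLR25}.

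The only step needing care is the perimeter bound $P_{\mathcal S_{\mathbf a}}(F_k)\le\mathcal H^1(\cdot)$. To justify it, I would approximate $\chi_{F_k}$ by Lipschitz functions that are piecewise linear in the vertical variable across a thin strip around the line $y=a_1\cdots a_k/2$. The slope integral of these approximations over $\mathcal S_{\mathbf a}$ is controlled by the average of the $\mathcal H^1$-measure of horizontal slices of the carpet within that strip. These slices have small measure, by the same product computation applied to nearby heights.
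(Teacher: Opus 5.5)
Your proposal is correct and follows the paper's proof. The positive part comes from Proposition \ref{prop:plargerthan1}, using the Ahlfors regularity and the weak $(1,p)$-Poincaré inequality from \cite{Sierpinski}. The negative part comes from the strip sets $F_k$, the isoperimetric inequality and Proposition \ref{prop:BV_ext_per_full_norm}, exactly as in the second half of the proof of Theorem \ref{thm:Sierp-W11_ext._char}.

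The paper only calls the bound $P_{\mathcal S_{\mathbf a}}(F_k)=0$ easy to check, and your justification of it via vertically piecewise-linear cutoffs is sound. The key point is that every slice at a height $t$ with $|t-a_1\cdots a_k/2|<a_1\cdots a_n/2$ has length at most $\prod_{j=k+1}^n(1-a_j)$.
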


\begin{proof}
Since $\mathbf a\in \ell_2\setminus \ell_1$ we know from \cite{Sierpinski} that $\mathcal S_\mathbf a$ is Ahlfors regular and has a weak $(1,p)$-Poincaré inequality. Therefore, by Proposition \ref{prop:plargerthan1} we find that $\mathcal S_{\mathbf a}$ is a $W^{1,p}$-extension set.

On the other hand, since $\mathbf a\notin \ell_1$, following the argument of the second part of the proof of Theorem \ref{thm:Sierp-W11_ext._char} we know that $\mathcal S_{\mathbf a}$ is not a $W^{1,1}$-extension set (nor a $BV$-extension set).
\end{proof}

\begin{proposition}\label{prop:W1p-ext-dom-NOT-W11}
    There exists an Ahlfors regular domain $\Omega\subset \R^2$ that is a $W^{1,p}$-extension domain for every $p>1$, but it is not a $W^{1,1}$-extension domain, nor a $BV$-extension domain.
\end{proposition}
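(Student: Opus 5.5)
The plan is to build $\Omega$ from a fat Sierpi\'nski carpet $\mathcal S_{\mathbf a}$ with $\mathbf a\in\ell_2\setminus\ell_1$, turned into a domain. Take $\Omega=\operatorname{int}$ of a thickened version of the carpet. Concretely, for each removed square $R\in\mathcal R_{\mathbf a,n}$ we would rather take the domain
$$\Omega=(-1,2)^2\setminus \bigcup_{n\in\N}\bigcup_{R\in\mathcal R_{\mathbf a,n}}\overline{R}.$$
This is open, and it is connected because the closed removed squares are pairwise disjoint and at positive distance from each other and from $\partial(-1,2)^2$. Alternatively one may use the interior of $\mathcal S_{\mathbf a,n}$-type approximations; we commit to the first choice. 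Since $\mathbf a\in\ell_2$, the set $\overline\Omega=[-1,2]^2\setminus\bigcup R$ has positive measure. Moreover $\partial\Omega$ consists of $\partial(-1,2)^2$, the boundaries $\partial R$, and the set $\mathcal S_{\mathbf a}\setminus\bigcup_R\partial R$. The last set has positive measure, so we cannot directly use results for open sets with null boundary. Measure density of $\Omega$, i.e.\ Ahlfors regularity, follows as in Lemma~\ref{lem:modif_Sierpinski_preserve_properties}: $|\Omega\cap B(x,r)|=|\overline\Omega\cap B(x,r)|$ up to the null set $\bigcup\partial R$, and $\overline\Omega\supset\mathcal S_{\mathbf a}$ is Ahlfors regular.

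For the $W^{1,p}$-extension property with $p>1$, we argue that $W^{1,p}(\Omega)$ and $W^{1,p}(\overline\Omega)$ are compatible. Functions in $W^{1,p}(\Omega)$ are absolutely continuous on a.e.\ line, and the only "cuts" are removed squares. Since $\Omega$ and $\overline\Omega$ differ only by a null set of boundaries of squares, one checks that the closed set $\overline\Omega$ (which is $\mathcal S_{\mathbf a}$ glued to the frame $[-1,2]^2\setminus(0,1)^2$) is Ahlfors regular and supports a weak $(1,p)$-Poincar\'e inequality for all $p>1$. This uses \cite[Theorem 1.6]{Sierpinski} together with the fact that gluing a uniform annulus along $\partial[0,1]^2$ preserves the inequality. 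Then Proposition~\ref{prop:plargerthan1} applies to $\Omega$, viewed as a measurable set with the same Poincar\'e inequality, since $|\overline\Omega\setminus\Omega|$-issues disappear for Sobolev functions defined via upper gradients on a.e.\ curve. The delicate point is identifying $W^{1,p}(\Omega)$ with $W^{1,p}(\overline\Omega)$. I would argue via ACL: $p$-a.e.\ curve in $\overline\Omega$ meets $\partial\Omega\cap\mathcal S_{\mathbf a}$ only along pieces where the Fubini/ACL description still applies. If this is troublesome, I would instead verify the Poincar\'e inequality directly on $\Omega$ by the same chaining/projection argument of \cite{Sierpinski}.

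For failure of the $BV$ property, and hence of $W^{1,1}$ by \cite[Lemma 2.4]{KMS2010}, we mimic the proof of Theorem~\ref{thm:Sierp-W11_ext._char}. We take $F_k=\Omega\cap([0,1]\times(0,a_1\cdots a_k/2))$. Its perimeter in $\Omega$ equals $\mathcal H^1$ of the horizontal segment at height $a_1\cdots a_k/2$ inside $\Omega$, plus vertical sides at $x=0,1$ lying in $\Omega$. Those sides lie on $\partial[0,1]^2$, which is in $\overline\Omega$ but has neighbourhoods in $\Omega$ only from the frame side, so they do contribute. To avoid this, we instead take $F_k=\Omega\cap((-1,2)\times(-1,a_1\cdots a_k/2))$. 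Then $P(F_k,\Omega)$ is the horizontal line portion in the frame (length $2$) plus $\mathcal H^1(\mathcal S_{\mathbf a}\cap$ line$)=0$, because $\mathbf a\notin\ell_1$. This is still bounded but not small, so the scaling trick must be adapted. Take instead $F_k$ to be $\Omega$ intersected with a thin strip inside one level-$k$ square $Q'\in\mathcal T_{\mathbf a,k}$, bounded by horizontal lines through carpet-only regions. Its relative perimeter is $0$ while $|F_k|>0$ and $|F_k|\to0$. Then the isoperimetric inequality $P(\widetilde F_k)\ge C|F_k|^{1/2}$ gives the contradiction with (PE2) as before. The main obstacle is making sure such $F_k$ exist with $P(F_k,\Omega)=0$. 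This needs a closed curve of zero $\mathcal H^1$-measure in $\Omega$, i.e.\ a boundary through $\mathcal S_{\mathbf a}$ avoiding $\Omega$'s genuine interior. Since $\Omega\cap$ carpet region has empty interior near $\mathcal S_{\mathbf a}$ only in measure sense, I would choose the strip edges along lines hitting only $\mathcal S_{\mathbf a}$ and removed squares, which is possible inside $[0,1]^2$ away from the frame.
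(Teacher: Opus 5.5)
\textbf{The construction does not produce a domain, and this is exactly the difficulty the statement is about.} The set $\Omega=(-1,2)^2\setminus\bigcup_{n}\bigcup_{R\in\mathcal R_{\mathbf a,n}}\overline R$ is not open. Pairwise positive distances between the closed squares do not make their infinite union closed. Since $\mathcal S_{\mathbf a}$ has empty interior, every neighbourhood of a point of $\mathcal S_{\mathbf a}$ meets some removed square. Hence the points of $\mathcal S_{\mathbf a}\setminus\bigcup_R\partial R$ belong to $\Omega$ without being interior points. This is a set of positive measure which, as you note yourself, lies in $\partial\Omega$. In fact $\text{int}(\Omega)=(-1,2)^2\setminus[0,1]^2$, a uniform domain and hence a $W^{1,1}$- and $BV$-extension domain. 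So ``thickening'' the whole carpet cannot work.

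The missing idea is that the holes may accumulate only on a set lying outside the domain. The paper removes closed squares, but after each block of scales $a_{n_k},\dots,a_{n_{k+1}-1}$ it continues the construction only in the bottom row of squares. Then all holes accumulate on $[0,1]\times\{0\}\subset\partial\Omega$, and $\Omega$ is locally a finite intersection of open sets. Full carpet blocks are still used, so the paper gets the weak $(1,p)$-Poincar\'e inequality for $p>1$ from $\mathbf a\in\ell_2$, and Proposition~\ref{prop:plargerthan1} applies. Since $\mathbf a\notin\ell_1$, one can choose the $n_k$ so that the horizontal line at height $a_1\cdots a_{n_k-1}/2$ meets $\Omega$ in length at most $1/k$.

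\textbf{Your $BV$ argument also cannot be transplanted to any genuine domain.} If $\Omega$ is open and connected and $P(F,\Omega)=0$, then $\chi_F$ is a.e.\ constant on $\Omega$. So sets $F_k$ with zero relative perimeter and $0<|F_k|<|\Omega|$ do not exist, and the isoperimetric scaling trick is unavailable. In the carpet case that trick works because horizontal lines can meet the closed, interior-free set $\mathcal S_{\mathbf a}$ in an $\mathcal H^1$-null set, giving $P_{\mathcal S_{\mathbf a}}(F_k)=0$; your search for zero-length boundary curves only makes sense because your $\Omega$ is not open.

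The paper instead takes $F_k=\Omega\cap((0,1)\times(0,a_1\cdots a_{n_k-1}/2))$. This set has $P(F_k,\Omega)\le 1/k$ and $|F_k|\to0$. Any extension $\widetilde F_k$ satisfying (PE1) must carry perimeter $\gtrsim\ell(Q)$ near each removed square $Q$ crossed by that line. Summing gives $P(\widetilde F_k,\R^2)\gtrsim 1-P(F_k,\Omega)\ge 1-1/k$, so (PE2) fails uniformly.
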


\begin{proof}
    
Let $\mathbf a=(a_n)_{n\geq 1} \in \ell_2\setminus \ell_1$. The idea is to imitate the iterative construction of the Sierpinksi carpet with two differences. First, in order to get a domain, we will remove closed squares at each step instead of open ones. And second, after a finite number of steps $(a_1,\dots,a_{n_1-1})$ we continue the process only in the bottom row of squares from this iteration. That is, only in those squares from $\mathcal T_{\mathbf a,n_1-1}$ which intersect $[0,1]\times \{0\}$, we continue  removing squares following the scales $(a_{n_1},a_{n_1+1},\dots a_{n_2-1})$ until again we stop at iteration $n_2-1$. As before, we continue the process only on the bottom row of squares from iteration $n_2-1$ with the subsequent $(a_{n_2},a_{n_2+1},\dots a_{n_3-1})$, and so on. The key point is that the weak Poincaré inequalities behave the same way as in the well-studied Sierpinksi carpet $\mathcal S_{\mathbf a}$, and by making the holes accumulate on $[0,1]\times \{0\}$ we indeed have an open set. By making an appropriate choice of the sequence $(n_k)_{k\geq 1}$, our domain will have the weak $(1,p)$-Poincaré inequality for all $p>1$ but not the weak $(1,1)$-Poincaré inequality. 

\medskip

We now give the definition of $\Omega$. Recall that the fat Sierpinksi carpet $\mathcal S_{\mathbf a}$ was defined as  \[
  \mathcal S_{\mathbf a} = \bigcap_{n\in\N}\mathcal S_{\mathbf a,n}=[0,1]^2\setminus \bigcup_{n\in\N}
\mathcal \bigcup_{Q\in \mathcal R_{\mathbf a,n}}Q.
\]
Instead, let us define for every $m\geq 2$, the Sierpinksi precarpet domains
$$\Omega^{m}_{\mathbf a}=\bigcap_{n=1}^{m-1} \Omega_{\mathbf a,n}= (0,1)^2\setminus \bigcup_{n\geq 1}^{m-1}\bigcup _{Q\in \mathcal R_{\mathbf a,n}}\overline{Q} \quad ;\quad \Omega^{1}_{\mathbf a}=(0,1)^2 .$$
Pick next a strictly increasing sequence $(n_k)_{k\geq 1}\subset(\N\setminus \{1\})$  so that 
\begin{equation}\label{eq:prop4.7}
\mathcal H^1\left(\Omega^{n_{k}}_{\mathbf a}\cap \left( [0,1] \times \{ a_1\cdots a_{n_{k}-1}/2 \} \right)  \right) =\dfrac{1}{a_1\cdots a_{n_{k}-1}}\prod^{n_{k+1}-1}_{j=n_{k}}(1-a_j)\leq \dfrac{1}{k}.\end{equation}
The existence of such sequence is guaranteed by the fact that $\mathbf a\notin \ell_1$. To simplify notation let $m_k=n_{k+1}-1$ from now on. We define our domain  $\Omega\subset (0,1)^2$ as
$$\Omega=\bigcap_{k\geq 1}U^{k}_{\mathbf a} \quad \text{where} \quad \begin{cases}U^{1}_{\mathbf a}=\Omega^{n_1}_{\mathbf a}\\  U^{k}_{\mathbf a}= \Omega^{n_k}_{\mathbf a}\cup [(0,1)\times (a_1\cdots a_{n_{k-1}},1)],\;\;k\geq 2
\end{cases}. $$

We now show that $\Omega$ is not a $BV$-extension domain (and hence not a $W^{1,1}$-extension domain), whereas it remains a $W^{1,p}$-extension domain for all $p > 1$.

\smallskip

{\em $\bullet$ $\Omega$ is not a $BV$-extension domain}: This can be seen by a similar argument as that of the second part of Theorem \ref{thm:Sierp-W11_ext._char}. Indeed, in this situation, since we work with a domain in $\R^2$, the $BV$-extension property (see \cite{BM1967} and \cite[Lemma 2.1]{KMS2010}), is equivalent to the extension of sets of finite perimeter. Therefore, let $F_k=\Omega\cap ((0,1)\times (0,a_1\cdots a_{n_k -1}/2))$, so that we have $$P(F_k,\Omega))= \mathcal H^1\left(\Omega^{n_{k}}_{\mathbf a}\cap \left( [0,1] \times \{ a_1\cdots a_{n_{k}-1}/2 \} \right)  \right) =\dfrac{1}{a_1\cdots a_{n_{k}-1}}\prod^{n_{k+1}-1}_{j=n_{k}}(1-a_j)\leq \dfrac{1}{k}.$$
Moreover, by imitating the argument and notation from the last part of Theorem \ref{thm:Sierp-W11_ext._char}, any possible extension $\widetilde F_k$ must satisfy
\begin{align}
P(\widetilde F_k,\R^2)&\geq  \sum^{m_k}_{j=n_k}\sum_{Q\in \mathcal F^j_k}P(\widetilde F_k,R_Q)\geq \sum^{m_k}_{j=n_k}\sum_{Q\in \mathcal F^j_k} C (a_1\cdots a_j)\\
&= \sum^{m_k}_{j=n_k}\sum_{Q\in \mathcal F^j_k} C \ell(Q)=C(1-P(F_k,\Omega))\geq C \left( 1-\dfrac{1}{k}\right)
\end{align}
 for some positive constant $C>0$. Consequently, sets of finite perimeter cannot be extended in the sense of conditions (PE1) and (PE2) from the introduction.

\medskip

{\em $\bullet$ $\Omega$ is not a $W^{1,1}$-extension domain}: This follows from the fact that $\Omega$ is not a $BV$-extension domain and by using \cite[Lemma 2.4]{KMS2010}.

\medskip

{\em $\bullet$  $\Omega$ is a $W^{1,p}$-extension domain for every $p>1$}: This is clear because $\Omega$ satisfies a weak $(1,p)$-Poincaré inequality, thanks to the fact that $\mathbf a\in \ell_2$. We omit the details of this proof because it follows the same strategy as \cite[Theorem 1.6]{Sierpinski}. Since we also know that $\Omega$  is Ahlfors regular we can make use of Proposition \ref{prop:plargerthan1} and conclude that $\Omega$ satisfies the  $W^{1,p}$-extension property.

\end{proof}

\section{An Instructive Example}\label{sec:example}

We present an example showing that the method that we used in the proof of Theorem \ref{thm:Sierp-W11_ext.} to construct the extension operator does not work for general Ahlfors-regular compact subsets of the plane satisfying  a weak $(1,1)$-Poincar\'e inequality. The main point is that a parallel version of Lemma \ref{lem:modif_Sierpinski_preserve_properties}, where a modified Sierpi\'nski carpet was built by enlarging the holes at small enough scales while maintaining the weak $(1,1)$-Poincaré inequality, fails to hold for this specific example. This limitation is formalized in Proposition \ref{lem:properties_of_example} below.

\begin{example}\label{ex:method_fails}

We define a  compact set  $E\subset \R^2$  as follows. Pick a decreasing sequence $(t_k)_{k\geq 1}\subset \left(\tfrac{1}{3}, \tfrac{5}{12}\right)$ so that $t_k\to \tfrac{1}{3}$. Then, let $\epsilon_1= \tfrac{1}{2}$ and for every $k\geq 2$ define 

$$ \epsilon_k=\left( t_{k-1}-\dfrac{1}{3} \right)\epsilon_{k-1}.$$
The sequence $(\varepsilon_k)_{k\geq 1}$ is decreasing and converges to zero. Note that $\epsilon_k<\tfrac{1}{12}\epsilon_{k-1}$ for all $k\in\N$.
Let also $n_k\in\N$ be the unique number such that
$$\dfrac{1}{n_{k}}\leq \epsilon_k <\dfrac{1}{n_k-1}.$$
Next, for every $k\in\N$, we consider a family of disjoint $n_k$ equilateral triangles $\{I_{k,j}\}_{j=1}^{n_k}$ whose upper sides are equi-distributed on $[0,1]\times  \{t_k\epsilon_k\}$ and  all of them have their (lower) vertex within the segment $[0,1]\times\{(t_k-1/3)\epsilon_k\}$. Note that the height of each triangle $I_{k,j}$ is $\tfrac{\epsilon_k}{3}$, that $ \ell(I_{k,j})=\tfrac{2\epsilon_k}{3\sqrt{3}}$ and also $\dist(I_{k,j},I_{k,j+1})\sim \epsilon _k$ for all $j=1,\dots, n_k-1$.

\noindent  Finally, we define the compact set $E\subset \R^2$ as
\begin{equation}\label{eq:final_example}
E=([-1,2]\times[-1,1]) \setminus \left( \bigcup^{\infty}_{k=1}\bigcup^{n_k}_{j=1}I_{k,j}\right).\end{equation}
 In what follows, for an open equilateral triangle $I\subset \R^2$ with center $(x_0,y_0)$ and lower vertex  $(x_0,z_0)$, and for a given $\eta>0$, we denote by $\eta I$ the open equilateral triangle with same center $(x_0,y_0)$ and lower vertex $(x_0, y_0-\eta (y_0-z_0)) $. Moreover, define
$$\alpha=\sup \left\{t>0:\, \text{the family $\{tI_{k,j}\}^{n_k}_{j=1}$ is disjoint for every $k\in\N$}  \right\} >1 .$$

\begin{figure}

\begin{center}
\begin{tikzpicture}[scale=3.5]

\fill[black!70, scale=2.3] (0.85,-0.2) rectangle (2.15,0.5);

\filldraw[color=black!70, fill=white, scale=2.3](1.2,0.0625) -- (1.0666,0.28) -- (1.3333,0.28) -- cycle;

\filldraw[color=black!70, fill=white, scale=2.3](1.8,0.065) -- (1.6666,0.28) -- (1.9333,0.28) -- cycle;

\foreach \x in {1,2,3,...,16}{\filldraw[color=black!70, fill=white, thin,  scale=2.3]({1+(\x*0.0588)},0.0052) -- ({1+(\x*0.0588)-0.015},0.024) -- ({1+(\x*0.0588)+0.015},0.024) -- cycle;}

\foreach \x in {1,2,3,...,192}{\filldraw[color=black!70, fill=white, thin,  scale=2.3]({1+(\x*0.0052)},0.0002) -- ({1+(\x*0.0052)-0.0022},0.0032) -- ({1+(\x*0.0052)+0.0022},0.0032) -- cycle;}

\end{tikzpicture}
\end{center} {The set $E\cap \left[-\tfrac{1}{3},\tfrac{4}{3}\right]\times\left[-\tfrac{1}{4},\tfrac{1}{4}\right]$ defined in \eqref{eq:final_example} for $\epsilon_1=1/2$, $n_1=2$ and $(t_k)_{k\geq 1}=(\frac{19}{48},\frac{17}{8},\dots)$.} 
 \end{figure}

\begin{proposition}\label{lem:properties_of_example}
For the compact set $E\subset\R^2$ defined in \eqref{eq:final_example} we have:

\begin{enumerate}
\item $E$ is Ahlfors 2-regular.
\item $E$ satisfies a weak $(1,1)$-Poincar\'e inequality.
\item $E$ is a $W^{1,1}$-extension set.
\item  For any choice of parameters $1 < \eta <\tau< \alpha$, any choice of open neighborhoods $U_{k,j}$ with $\eta I_{k,j} \subset U_{k,j} \subset \tau I_{k,j}$ for all $k,j$ and any choice of $N\in\N$, the set
\[E_{N}=
([-1,2]\times[-1,1])  \setminus \bigcup_{k\ge N}\bigcup^{n_k}_{j=1}U_{k,j}
\]
does not satisfy a weak $(1,1)$-Poincar\'e inequality. Indeed, the Poincaré inequality fails for balls centered at points from $(0,1) \times \{0\}$.
\end{enumerate}
    
\end{proposition}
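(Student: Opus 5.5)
The plan treats the four items separately. Items (1) and (3) are soft and follow the scheme of Section \ref{sec:Siperpinski carpet}. Item (2) needs a curve-family argument. Item (4) carries the real content of the example, and I expect it to be the main obstacle.

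\emph{Items (1) and (2).} The geometric facts to use are these:
\begin{itemize}
\item the complement components $I_{k,j}$ are equilateral triangles of size $\sim\epsilon_k$;
\item within a row they are $\sim\epsilon_k$-separated;
\item the $k$-th row lies in the strip $[0,1]\times[\epsilon_{k+1},t_k\epsilon_k]$;
\item $\epsilon_{k+1}<\epsilon_k/12$.
\end{itemize}
For (1), any ball $B(x,r)$ with $x\in E$ meets at most one row in an essential way. I would show that a fixed proportion of $B(x,r)$ lies either in the gaps between triangles or in the solid region below $y=0$ or above the rows, which gives Ahlfors regularity with an explicit constant.

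For (2), I would build, for each ball, a Semmes-type pencil of curves joining points of $B\cap E$ inside $\lambda B\cap E$ with bounded density. Horizontal segments at heights avoiding the rows are always available. For vertical passage through row $k$, the curve goes down along a slanted side of a downward triangle to its lower vertex, which lies at height $\epsilon_{k+1}$. Since the next row sits below $t_{k+1}\epsilon_{k+1}<\epsilon_{k+1}/2$, the curves can spread out again before reaching it. The concentration near each vertex is what has to be checked carefully. I would avoid it by letting each curve hitting a triangle slide only part of the way down the side and then leave into the gap at an angle depending linearly on the entry point, so that the pencil density stays bounded. An alternative is to verify hypotheses analogous to those of Theorem \ref{thm:Sylvester_22} with triangles in place of squares; in that case the row-wise projection fraction is a constant $\approx 2/(3\sqrt3)$, but the rows are superexponentially separated in scale.

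\emph{Item (3).} I would repeat the proof of Theorem \ref{thm:Sierp-W11_ext.}. For each triangle $I_{k,j}$, the Whitney construction of Lemma \ref{lem:sierpinski_ext_op.} applied to $S=E\cap 3I_{k,j}$ gives an operator into $W^{1,1}(I_{k,j})$ with the trace property \eqref{eq:cont_up_to_boundary}. Its proof only used Ahlfors regularity, the Poincaré inequality, and uniformity of the complement of the hole, and all three hold here with constants uniform in $k,j$. The sets $E\cap 3I_{k,j}$ overlap only boundedly, because separation within a row is $\sim\epsilon_k$ and separation between rows is even larger relative to size. Gluing the pieces together with the exterior of $[-1,2]\times[-1,1]$ then gives the norm bound.

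For membership in $W^{1,1}(\R^2)$ I would use the ACL argument again. Here a.e. vertical line meets infinitely many triangles, but only accumulating at the single point where it crosses $y=0$, which lies in $E$. Summing \eqref{eq:abs_cont_2} and \eqref{eq:abs_cont_3} over the countably many pieces and using continuity of the Lipschitz function $u$ at that point still gives \eqref{eq:abs_cont_a.e._line}.

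\emph{Item (4) (main obstacle).} The idea is that the enlargement moves the lower vertex of every row-$k$ triangle down by $\gtrsim(\eta-1)\epsilon_k$. Since $\epsilon_{k+1}\ll\epsilon_k$, every enlarged row $k\ge N$ then crosses the line $y=0$. At the height $y=h$, with $h$ small, each row covers a fraction $\ge\theta(\eta)>0$ of $[0,1]$, namely triangles of width $\sim(\eta-1)\epsilon_k$ at spacing $\sim\epsilon_k$.

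Fix a ball $B=B((x_0,0),r)$ with $r$ small and $x_0\in(0,1)$, and let $F=E_N\cap B\cap\{y<h\}$. By the Poincaré inequality in the form \eqref{eq:BV-PI} applied to $\chi_F$, as in Lemma \ref{lem:Iso_per.}, we get $P_{E_N}(F,\lambda B)\gtrsim r$, because both halves of $B$ contain a fixed proportion of $E_N$. On the other hand, $P_{E_N}(F,\lambda B)\le\mathcal H^1\bigl(E_N\cap\lambda B\cap\{y=h\}\bigr)$.

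The step to prove is that the horizontal line $y=h$ meets all enlarged rows $N\le k\le m(h)$, with $m(h)\to\infty$ as $h\to0$. Because the scales $\epsilon_k$ are lacunary with ratio tending to $0$, the rows are asymptotically independent, so the uncovered length is at most $C r(1-\theta)^{m(h)-N}$. The relevant count is over rows with $\epsilon_k\lesssim r$, which is where this step should be set up carefully: rows with $\epsilon_k\gg r$ are seen by the ball as a single wedge, so the product should run over rows with $h\ll\epsilon_k\lesssim r$, whose number still tends to infinity as $h\to0$. Letting $h\to0$ makes the perimeter $o(r)$, which contradicts the Poincaré lower bound. This is the same mechanism as the non-$\ell_1$ argument in the proof of Theorem \ref{thm:Sierp-W11_ext._char}. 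By contrast, for the unenlarged set $E$ row $k$ stays above $y=\epsilon_{k+1}>0$, so no line $y=h$ meets infinitely many rows, and item (2) is not contradicted.
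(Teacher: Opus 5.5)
The proposal has a genuine gap in item (3), a related hole in item (2), and is fine on items (1) and (4).

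\textbf{Item (3).} Your gluing step rests on the claim that the sets $E\cap 3I_{k,j}$ overlap only boundedly, because ``separation between rows is even larger relative to size''. The opposite is true. The lower vertex of $I_{k,j}$ lies at height $(t_k-\tfrac13)\epsilon_k=\epsilon_{k+1}$, and $\epsilon_{k+1}/\epsilon_k=t_k-\tfrac13\to0$. So the segment $[0,1]\times\{0\}$, together with all rows $m>k$, passes at distance $\le\epsilon_{k+1}=o(\epsilon_k)$ below that vertex. Hence, for large $k$, any fixed dilate $cI_{k,j}$ with $c>1$ (in particular $3I_{k,j}$) crosses $\{y=0\}$ in an interval of length $\sim\epsilon_k$. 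That is a fixed fraction of the period of row $k$.

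The density-point argument you need in item (4) then shows that a.e.\ point of $[0,1]\times\{0\}\subset E$ lies in $3I_{k,j}$ for infinitely many $k$. These sets are open and $E$ is Ahlfors regular, so $\sum_{k,j}\chi_{E\cap 3I_{k,j}}$ exceeds any given $M$ on subsets of $E$ of positive measure. Taking $u$ supported there shows that the estimate $\sum_{k,j}\|u\|_{W^{1,1}(E\cap 3I_{k,j})}\le C\|u\|_{W^{1,1}(E)}$ is false.

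The carpet repair does not rescue this. Deleting neighbourhoods of the smaller holes from each piece, as $S_Q$ does, produces exactly sets of the type $E_N$ in item (4), which have no weak $(1,1)$-Poincar\'e inequality. That failure is the purpose of the example, and it is why Lemma \ref{lem:sierpinski_ext_op.} cannot be transplanted.

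The paper uses no Poincar\'e inequality on the pieces. It takes pairwise disjoint squares $Q_{k,j}\supset\overline{I_{k,j}}$ of side $\tfrac23\epsilon_k$, whose lower side passes through the vertex of $I_{k,j}$. Each $Q_{k,j}\setminus I_{k,j}$ is a rescaled copy of one fixed set $Q\setminus I$, which fails the Poincar\'e inequality but is an $L^{1,1}$-extension set by \cite{G-BRT24}. Scale invariance of the homogeneous seminorm gives uniform operator bounds. The paper then sums over the disjoint squares and upgrades to $W^{1,1}$ via Lemma \ref{lem:hom-full-norm}(a).

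\textbf{Item (2).} Your sketch misses the same geometric point. Take $\epsilon_{k+1}\ll\rho\ll\epsilon_k$ and a ball of radius $\rho$ around the lower vertex of $I_{k,j}$. Inside that ball, the two parts of $E\cap\{y>\epsilon_{k+1}\}$ on either side of the $60^\circ$ wedge can only be joined through the strip $\{0\le y\le\epsilon_{k+1}\}$. That strip is far too thin for horizontal segments in row-free bands to carry a pencil of width $\rho$. Indeed $E\cap\{y\ge0\}$ supports no weak $(1,1)$-Poincar\'e inequality.

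The curves therefore have to dive through all rows $m>k$ into the solid part $[-1,2]\times[-1,0]$ and come back up; this is the paper's ``mirror'' construction. The bending inside each row that you describe, with the pencil restored to straight segments on each line $y=\epsilon_k$, is what keeps the density bounded while crossing infinitely many rows, each blocking a fixed fraction. You need to state that this is where the curves go.

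Your fallback via Theorem \ref{thm:Sylvester_22} is not available. Hypothesis (5) with $\mathbf b\in\ell_1$ requires the projections of generation $k$ to shrink like $b_k$. Here every row projects onto a fixed fraction $2/(3\sqrt3)$ of $[0,1]$.

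\textbf{Items (1) and (4).} These are fine. Slicing at heights $h\to0$ in (4) is a legitimate variant of the paper's argument. The paper instead shows directly that $E_N\cap((0,1)\times\{0\})$ is $\mathcal H^1$-null, so the step function across $y=0$ has zero $1$-weak upper gradient. Your bound on the uncovered length should come from that density-point argument rather than from an asserted product $(1-\theta)^{m}$.
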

Observe that by Proposition \ref{lem:properties_of_example} (4), the method from Theorem \ref{thm:Sierp-W11_ext.} used to prove that fat Sierpiński carpets are $W^{1,1}$-extension sets (specifically, Lemma \ref{lem:modif_Sierpinski_preserve_properties}) fails for Example \ref{eq:final_example}.

\begin{remark}
The most technical part of the proof of Proposition \ref{lem:properties_of_example} is showing that $E$ supports a weak $(1,1)$-Poincaré inequality. For this, we cannot rely on Theorem \ref{thm:Sylvester_22} (see \cite[Theorem 1.3]{EG22}), because the set $E$ does not have {\em small projections} as required in Theorem \ref{thm:Sylvester_22} (5). Instead, we prove the  weak $(1,1)$-Poincaré inequality by showing the existence of Semmes  family of curves on $E$.

\end{remark}
\begin{proof}

$(1)$ Let us show that $E$ is Ahlfors regular. We remark that the Alhfors regularity constants that we get are far from being sharp. We start by defining the strips 
$$S_k:=[-1,2]\times[\epsilon_{k},\epsilon_{k-1}],\quad k\geq 1,$$ where $\epsilon_0=1$, and also name $E_l:=[-1,2]\times [-1,0]$ the lower part of $E$. Take a point $z=(z_1,z_2)\in E$, and fix $k=k_z\in \N\cup \{-1,0\}$ so that $z\in S_k$. Then, we distinguish several cases. Suppose first that $z\in S_0\cup E_l$. Then for every $r>0$ we have
    $$|B(z,r)\cap E| \geq \left|Q(z,\sqrt{2}r)\cap E\right|\geq \dfrac{\sqrt{2}}{2}r \sqrt{2}r=r^2= \dfrac{1}{\pi}|B(z,r)|. $$
 Suppose next that $z\in S_k$ for some $k\in\N$ and let $r>0$. In the case  $a_2\leq \tfrac{\sqrt{2}}{4}r$, we have
    $$|B(z,r) \cap E|\geq |Q(z,\sqrt{2}r)\cap E|\geq (\sqrt{2}/4)r\sqrt{2}r =\dfrac{r^2}{2}=\dfrac{1}{2\pi}|B(z,r)|.$$
    Otherwise, 
    $\tfrac{\sqrt{2}}{4}r<z_2\leq \epsilon_{k-1}$ and we get
    $$|B(z,r) \cap E|\geq |Q(z,\sqrt{2}r)\cap E|\geq |Q(z,\sqrt{2}r/2)\cap E\cap S_k|\geq\dfrac{r^2}{16}=\dfrac{1}{16\pi}|B(z,r)|.$$
    Here we used the fact that
 whenever $b\in S_k\cap E$ and $\tfrac{s}{2}\leq \epsilon_{k-1}$ we have  $|Q(b,s)\cap E\cap S_k|\geq \tfrac{s^2}{8}$.

\medskip

$(2)$ We show that $E$ satisfies the weak $(1,1)$-Poincaré inequality by proving that $E$ supports a Semmes family of curves.

\begin{definition}
    A compact set $E\subset \R^2$ is said to support a {\em Semmes family of curves} if there exists a constant $C\geq 1$ so that for each pair of points  $x, y\in E$ there is a family $\Gamma_{xy}$ of $C$-quasiconvex curves (not necessarily pairwise disjoint) connecting $x$
to $y$ together with a  probability measure $\mu_{xy}$ on $\Gamma_{xy}$ satisfying a Riesz-type inequality; that is, for every Borel set $A\subset E$ we have 
\begin{equation}\label{eq:Semmes_curves_inequality}
\int_{\Gamma_{xy}}\mathcal H^1(\gamma\cap A)\, d\mu_{xy}(\gamma)\leq C\int_{CB_{xy}\cap A}\dfrac{|x-w|}{|B(x,|x-w|)\cap E|}+\dfrac{|y-z|}{|B(y,|y-w|)\cap E|}\, dw .
\end{equation}
Here $CB_{xy}=B(x, C|x-y|)\cup B(y,C|x-y|)$.
\end{definition}
In our case, since the set $E$ is doubling,  inequality \eqref{eq:Semmes_curves_inequality} can be rewritten as
\begin{equation}\label{eq:aa}
\int_{\Gamma_{xy}}\mathcal H^1(\gamma\cap A)\, d\mu_{xy}(\gamma)\leq C\int_{CB_{xy}\cap A}\dfrac{1}{|x-w|}+\dfrac{1}{|y-w|}\, dw \quad \text{for every Borel set $A\subset E$}. \end{equation}
Every Ahlfors regular compact set $E\subset \R^2$ supporting a Semmes family of curves satisfies a weak $(1,1)$-Poincaré inequality. We prefer to omit the proof of this essential fact, which is well-known in the literature; instead we refer to \cite[Section 3]{KLS15} and \cite[Theorem 9.5]{Heinonen01} for the interested reader. More interestingly, we point out that for Ahlfors regular sets the weak $(1,1)$-Poincaré inequality also implies the existence of a Semmes family of curves (see \cite{DEKS2021,FO2019}).

\medskip

The next lemma states the main properties that the family $\Gamma_{xy}$ of $C$-quasiconvex curves will satisfy in order for the inequality \eqref{eq:aa} to hold. We first state and prove the lemma, and then we show how to build the family $\Gamma_{xy}$.

\medskip

Let us fix some useful notation in order to work with curve families.
If $\Gamma=\{\gamma_t\}_{t\in J}$ and $\Gamma'=\{\gamma'_t\}_{t\in J}$ are two curve families indexed by the same set of indexes $J$ and such that the end point of each  $\gamma_t\in\Gamma$ corresponds to the initial or end point of $\gamma'_t$ in $\Gamma'$, we write $\Gamma\cup \Gamma'=\{\gamma_t\cup\gamma'_t\}_{t\in J}$ to denote the family of natural concatenations of the curves $\gamma_t$ with $\gamma'_t$ (these ones maybe with reversed direction). Moreover, for a given family of arc-length parametrized curves $\Gamma=\{\gamma_t\}_{t\in J}$, and with no aim of confusion, we denote its image $\bigcup_{t\in J}\{\gamma_t(s):\, s\in [0,\ell(\gamma_t)]\}$ using the same letter $\Gamma$.

\begin{lemma}\label{lem:Semmes_family}
Let $x,y\in E$ be distinct points and let $D=|x-y|>0$. Suppose that there is a closed segment $J\subset E$ such that $\mathcal H^1(J)=\tfrac{D}{32}$ and assume that we have a family of curves  $\Gamma_{xy}=\{\gamma_z\}_{z\in J}\subset E$ connecting $x$ with $y$, parameterized by arc-length and endowed with the normalized $\mathcal H^1$-measure $\mu=\mu_{xy}$. That is, for every $\Delta\subset \Gamma_{xy}$,
$$
\mu(\Delta)=\mathcal H^1(\{z\in J:\, \gamma_z\in \Delta  \})/\mathcal H^1(J).$$
If there is a constant $C\geq 1$, independent of $x$ and $y$, such that the family of curves $\Gamma_{xy}$ satisfies the following three properties $(1)-(3)$, then $E$ supports a Semmes family of curves. 
\begin{enumerate}
    \item Every curve $\gamma\subset \Gamma_{xy}$ is $C$-quasiconvex.
    \item $\Gamma_{xy}\subset 4B_{xy}\cap E$.

    \item  We can write $\Gamma_{xy}=\Gamma_x\cup\Gamma_y$, where $\Gamma_x=\{\gamma_z|_{[0,s_z]} :\, z\in J\} =\{\gamma^{x}_{z}\}_{z\in J}$ and $\Gamma_y=\{\gamma_z|_{{[s_z,1]}} :\, z\in J\}=\{\gamma^{y}_{z}\}_{z\in J} $, and $\bigcup_{z\in J} \gamma_z(s_z)=J$. Moreover, for $a=x$ and $a=y$ we have that for all $r>0$ and all $z,z'\in J$ for which $\gamma^{a}_{z}\setminus B(a,r)\neq\emptyset$ and $\gamma^{a}_{z'}\setminus B(a,r)\neq \emptyset$, then
\begin{equation}\label{eq:key_property_semmes}
 \dfrac{|z-z'|}{\mathcal  H^1(J)}\leq C\dfrac{\dist(\gamma^{a}_{z}\setminus B(a,r),\gamma^{a}_{z'}\setminus B(a,r)}{r}  
  \end{equation}

\end{enumerate}
\end{lemma}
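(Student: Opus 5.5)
We need to verify the Riesz-type inequality \eqref{eq:aa} for the family $\Gamma_{xy}$ with the normalized measure $\mu$. Fix a Borel set $A\subset E$. Writing $\mathcal H^1(\gamma_z\cap A)\le \mathcal H^1(\gamma^x_z\cap A)+\mathcal H^1(\gamma^y_z\cap A)$, it suffices by symmetry to show
\[
\int_J \mathcal H^1(\gamma^x_z\cap A)\,\frac{dz}{\mathcal H^1(J)}\le C\int_{4B_{xy}\cap A}\frac{dw}{|x-w|}.
\]
By property (2), every curve lies in $4B_{xy}$, so only $A\cap 4B_{xy}$ matters, and property (1) gives $C$-quasiconvexity, i.e.\ the curves are admissible and of length $\lesssim D$. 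The left-hand side is the integral of $A$ against the pushforward measure $\nu_x:=\int_J \mathcal H^1|_{\gamma^x_z}\,d\mu(z)$. Thus the task is to show that $\nu_x$ has a density bounded by $C|x-w|^{-1}$. Equivalently, in dyadic form, for every ball $B(w,s)$ with $|w-x|\sim r$ we need $\nu_x(B(w,s))\lesssim s^2/r$.

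The key step is to decompose dyadically around $x$: consider the annuli $A_i=B(x,2^{-i+3}D)\setminus B(x,2^{-i+2}D)$ for $i\ge 0$ (the scales beyond $8D$ do not appear, by (2)). Fix $r=2^{-i+2}D$ and a small ball $B(w,s)\subset A_i$, $s\le r$. Let $J_w=\{z\in J:\gamma^x_z\cap B(w,s)\neq\emptyset\}$. Every such $\gamma^x_z$ leaves $B(x,r)$, so \eqref{eq:key_property_semmes} applies. For $z,z'\in J_w$, the sets $\gamma^x_z\setminus B(x,r)$ and $\gamma^x_{z'}\setminus B(x,r)$ both meet $B(w,s)$, hence lie within distance $2s$ of each other, which gives $|z-z'|/\mathcal H^1(J)\le 2Cs/r$. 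So $\mu(J_w)\lesssim s/r$. Each curve is quasiconvex (with respect to its own subarcs, by (1)), so its length inside $B(w,s)$ is at most $Cs$. Hence $\nu_x(B(w,s))\lesssim s\cdot s/r$, which is the needed bound. A standard covering argument (Vitali, or differentiation of $\nu_x$ with respect to Lebesgue measure, noting that $\nu_x$ gives zero mass to null sets because of this very bound) then turns the ball estimate into $d\nu_x\le C|x-w|^{-1}\,dw$ on $A_i$. Summing over $i$ gives the claim. Finally, since $E$ is Ahlfors regular by part (1), we have $|x-w|/|B(x,|x-w|)\cap E|\sim 1/|x-w|$, so \eqref{eq:aa} is equivalent to \eqref{eq:Semmes_curves_inequality}. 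Together with quasiconvexity, this yields the Semmes family.

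The main obstacle is the local length bound of a curve inside $B(w,s)$ when $s$ is small compared to $r$. Quasiconvexity of a curve, in the sense of (1), presumably means that each subarc has length comparable to the distance between its endpoints. But a single curve could still re-enter $B(w,s)$ many times. I would handle this by using that the subarc between the first and last visits to $B(w,s)$ has length at most $C\cdot 2s$, so the total length inside $B(w,s)$ is at most $2Cs$. If (1) were only meant for the whole curve, this step would fail, and I would instead interpret (1) in the strong (subarc) sense.
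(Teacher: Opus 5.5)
Your argument breaks at the per-curve bound $\mathcal H^1(\gamma^x_z\cap B(w,s))\le Cs$.

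In the lemma, hypothesis (1) only means $\ell(\gamma_z)\le C|x-y|$. That is the standard requirement in a Semmes family, and it is all the paper checks in step (C). The subarc reading you propose is not available, and it fails for the very curves the lemma is applied to. In case (B.1), the straight part $\overline{xz}\cap E_l$ of $\gamma_z$ can pass through $y$; the curve then continues to $z$ and comes back to $y$.

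Nor can the bound be extracted from (2) and (3). Your estimate $\mu(J_w)\le 2Cs/r$ is correct, but (3) only controls the spacing \emph{between} curves, not how a single curve wanders. Take a thin band $\{\gamma^x_z\}_{z\in J'}$, with $J'$ a short end-segment of $J$, and fold it like an accordion inside $B(w,s)$, respecting the spacing (3) demands. Each of its curves then has length much larger than $s$ in $B(w,s)$, while its total length stays $\lesssim D$. So $\nu_x(B(w,s))\le\mu(J_w)\sup_z\mathcal H^1(\gamma^x_z\cap B(w,s))$ only gives $\lesssim sD/r$, and the density argument does not close.

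The missing idea is to bound the \emph{integrated} length by area through the coarea formula, which needs no information about individual curves. By (3), the curves $\gamma^x_z$ are pairwise disjoint away from $x$. Hence the labelling map $f^x(w)=z$ for $w\in\gamma^x_z$ is well defined, and (3) says exactly that $f^x$ is $C\mathcal H^1(J)/\rho$-Lipschitz on $\Gamma_x\setminus B(x,\rho)$. Set $A_j=A\cap(B(x,2^{-j}D)\setminus B(x,2^{-j-1}D))$. Extend $f^x$ restricted to $\Gamma_x\setminus B(x,2^{-j-1}D)$ to a Lipschitz function $\tilde f_j$ on $\R^2$ with the same constant (McShane, identifying $J$ with an interval). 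Then $\gamma^x_z\cap A_j\subset\tilde f_j^{-1}(z)$, and the coarea formula gives
\[
\int_J\mathcal H^1(\gamma^x_z\cap A_j)\,d\mu(z)\le\frac{1}{\mathcal H^1(J)}\int_{A_j}|\nabla\tilde f_j|\,dw\le\frac{2C\,|A_j|}{2^{-j}D}\le 2C\int_{A_j}\frac{dw}{|x-w|}.
\]
Summing over $j$, and doing the same for $\Gamma_y$ around $y$, yields the Riesz inequality.

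This is the paper's proof. It uses only (2) and (3); (1) enters solely through the quasiconvexity clause in the definition of a Semmes family. Your small-ball bound $\nu_x(B(w,s))\lesssim s^2/r$ does hold, but only as an instance of this same coarea (Eilenberg) inequality, and once you have that, the covering and differentiation step is unnecessary.
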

\begin{proof}
It is enough to prove that $(2)$ and $(3)$ imply that $(\Gamma_{xy},\mu_{xy})$ satisfies inequality \eqref{eq:aa} with constant $8C$ for every Borel subset $A\subset E$. We start by showing that \eqref{eq:key_property_semmes} implies that for $x=a$ and $x=y$ the functions 
 \begin{align*} f^a:\Gamma_a&\longrightarrow J\\
     z&\longmapsto f^a(w)=z,\quad \text{in case}\;w\in\gamma^a_z
 \end{align*}
are $( C\mathcal H^1(J)/r)$-Lipschitz on $\Gamma_a\setminus B(a, r)$ for every $r>0$. Note that $(f^{a})^{-1}(z)=\gamma^{a}_{z}$ for every $z\in J$. Let $w,w'\in \Gamma_x\setminus B(a,r)$ such  that $w\in \gamma^{a}_{z}$ and $w'\in \gamma^{a}_{z'}$ for some $z,z'\in I$. Then, using \eqref{eq:key_property_semmes},
$$|f(w)-f(w')|=|z-z'|\leq C\mathcal H^1(J)\dfrac{\dist(\gamma^{a}_{z}\setminus B(a, r),\gamma^{a}_{z'}\setminus B(a,r))}{ r} .$$
Since $w\in \gamma^{a}_{z} \setminus B(a, r)$ and $w'\in \gamma^{a}_{z'} \setminus B(a, r)$ we conclude that
$$|f(w)-f(w')|\leq C\mathcal H^1(J)\dfrac{|w-w'|}{r}. $$

\noindent Let us finally show that $(\Gamma_{xy},\mu_{xy})$ satisfies \eqref{eq:aa}. We take a Borel set $A\subset E$ and we may assume without loss of generality that $A\subset 4B_{xy}$ and, by dividing $A$ into two parts if necessary, that $|z-y|\geq |x-y|/2$ for all $z\in A$. 
%See the Summer version for more details.
We divide $A$ into different `annuli'
\begin{align*}
 A_j=A\cap (B(x,2^{-j}|x-y|)\setminus B(x,2^{-j-1}|x-y|).
\end{align*}
Next, by making use of the classical coarea formula for Lipschitz functions (see \cite[Theorem 3.10]{EG2015}), 
\begin{align}
    \int_{J}\mathcal H^1(\gamma_z\cap A)&\, d\mu(z)\leq \int_{J}\mathcal H^1(\gamma^{y}_{z}\cap A)+\mathcal H^1(\gamma^{x}_{z}\cap A)\, d\mu(z) \notag \\
    &=\dfrac{1}{\mathcal H^1(J)}\int_{J}\mathcal H^1\left((f^y)^{-1}(z)\cap A\right)\, d\mathcal H^1(z) + \sum_{j\geq -2} \int_{J}\mathcal H^1\left((f^x)^{-1}(z)\cap A_j\right)\, d\mathcal H^1(z) \notag \\
    & = \dfrac{1}{\mathcal H^1(J)} \int_{A} |\nabla f^y(w)| \,dw+  \sum_{j\geq -2} \int_{A_j} |\nabla f^x(w)|\, dw \notag \\
    &\leq \int_{A}\dfrac{2C}{|x-y|}\, dw+ \sum_{j\geq -2} \int_{A_j}\dfrac{C}{2^{-j-1}|x-y|}\, dw \notag \\
    &= \dfrac{2C}{|x-y|} \left(|A|+\sum_{j\geq -2} 2^j|A_j| \right). \label{eq:semmes_paso_1}
\end{align}
In the above estimates, note that we are using that $f^y$ is $\tfrac{2C\mathcal H^1(J)}{|x-y|}$-Lipschitz on $A\cap \Gamma_y$, which follows from \eqref{eq:key_property_semmes} and because $\dist(A,y)\geq |x-y|/2$. On the other hand, using the definition of $A_j$ and that for every $w\in A\subset 4B_{xy}$ we have $|y-w|\leq 4|x-y|$,

\begin{align}
    \int_{A}\dfrac{1}{|y-w|}&+\dfrac{1}{|x-w|}\, dw = \int_{A}\dfrac{1}{|y-w|}\, dw+\sum_{j\geq -2} \int_{A_j}\dfrac{1}{|x-w|}\, dw  \notag \\
    &\geq \int_{A}\dfrac{1}{4|x-y|}\, dw+\sum_{j\geq -2} \int_{A_j}\dfrac{1}{2^{-j}|x-y|}\, dw \geq \dfrac{1}{4|x-y|}\left( |A|+\sum_{j\geq -2} \left(2^j|A_j| \right)\right). \label{eq:semmes_paso_2}
\end{align}
It is clear that \eqref{eq:semmes_paso_1} and \eqref{eq:semmes_paso_2} yield to \eqref{eq:aa}, so we are done.
\end{proof}

Next, we explain how to build a curve family $\Gamma_{xy}$ connecting arbitrary  points $x$ and $y$ such that all assumptions $(1)-(3)$ of Lemma \ref{lem:Semmes_family} are satisfied for a constant $C\geq 1$ independent of the points. This will conclude the proof of Proposition  \ref{lem:properties_of_example} (2).

\bigskip

 $\bullet$ \underline{{\bf Construction of the Semmes family of curves $(\Gamma_{xy},\mu_{xy})$.}}

\medskip

\noindent Fix two distinct points $x=(x_1,x_2)$, $y=(y_1,y_2)\in E$, and denote
$$D=|x-y|\quad ;\quad Q=(0,1)\times(0,1/2).$$
In the case  $x,y\notin Q$, the existence of a Semmes family of curves $\Gamma_{xy}\subset E$ is clear because $E\setminus Q=([-1,2]\times [-1,1])\setminus Q$  is a uniform set and thus satisfies the weak $(1,1)-$Poincaré inequality. Hence, we may assume $x\in Q$ and we next distinguish two different situations, depending on whether
\begin{equation}\label{eq:sit_1_semmes}
     y_2>0 \quad \text{and}\quad  D<\frac{1}{2}\min\{x_2, y_2\}
\end{equation}
holds. If \eqref{eq:sit_1_semmes} holds, it means that the removed triangles that are close to both $x$ and $y$ have a large size compared to $D=|y-x|$. This allows for an easy argument in order to define a Semmes family of $2$-quasiconvex curves on $\tfrac{3}{2}B_{xy}$. In fact, in this situation, one can prove that the open set $\tfrac{3}{2}B_{xy}$ is intersected by at most three triangles of the family $\{I_{k,j}\}$, hence a Semmes family of $2$-quasiconvex curves is easily constructed within $\tfrac{3}{2}B_{xy}\cap E$.
% More details inthe Summer version.
    
\medskip

If \eqref{eq:sit_1_semmes} fails, we instead utilize the lower subset $E_l:=[-1,2]\times [-1,0]\subset E$ as a 'mirror', which provides sufficient room to construct the required Semmes families of curves\footnote{We note that the upper subset $E_u:=E\cap [-1,2]\times [0,1]$ does not support any weak $(1,1)$-Poincaré inequality. We leave the details of this fact for the interested reader.}. More precisely, given $x=(x_1,x_2)$ and $y=(y_1,y_2)$, we associate the segments $I_x, I_y\subset E_l$, defined by
$$ 
I_x:=\left[x_1-\tfrac{D}{64},x_1+\tfrac{D}{64}\right]\times \{-\tfrac{D}{2}\}\quad ; \quad I_y:=  \left[y_1-\tfrac{D}{64},y_1+\tfrac{D}{64}\right]\times \{-\tfrac{D}{3}\} .
$$
Without loss of generality, in case $y_2>0$, we assume $\min\{x_2,y_2\}=x_2$. Note that 
$\dist(x,I_x)\sim D$ because
\begin{equation}\label{eq:distance_x_I_x}
 \tfrac{1}{2}D\leq  \dist(x,I_x)\leq x_2+\tfrac{1}{2}D\leq 2D+\tfrac{1}{2}D=\tfrac{5}{2}D 
\end{equation}
and if $y_2>0$ and $D<\frac{1}{2}\min\{x_2, y_2\}=x_2$, we also have $ \dist(y,I_y)\sim D$ because
\begin{equation}\label{eq:distance_y_I_y}
\tfrac{1}{3}D\leq  \dist(y,I_y)\leq   \dist(y,(y_1,x_2))+\dist((y_1,x_2),I_y)=|y_2-x_2|+x_2+ \tfrac{1}{3}D\leq \tfrac{10}{3}D. 
\end{equation}
We let $J:=I_x$ be our set of indexes (note that $\mathcal H^1(J)=D/32$) and we build $\Gamma_{xy}=\{\gamma_z\}_{z\in J=I_x}$ according to steps (A)-(B)-(C) below. But first, we state the next useful fact, that will be needed in our argument below.

\begin{fact}\label{fact} 
Let $[0,L]^2\subset\R^2$ be a square with side-length $L>0$. And let $I$ be an open equilateral triangle with vertexes $v_1=(L/2,0)$, $v_2=(L/2-L/\sqrt{3} ,L/3) $ and $v_3=( L/2+L/\sqrt{3},L/3) $. Then, for some uniform constant $C\geq 1$,
\begin{enumerate}
    \item  If $x\in R$ and $J\subset R$ is a segment with $L/64<\mathcal H^1(J)<L/8$, then there  exists a family of curves $\{\gamma_z\}\subset F$ connecting $x$ with every $z\in J$ such that $\ell(\gamma_z)\leq C \mathcal H^1(J)$.
\item  If $x=(x_1,x_2)\in [0,L]\times[L/3,L]$ and $J\subset [0,L]\times\{0\}$ is a segment with $L/64<\mathcal H^1(J)<L/8$ and mid-point $(x_1,0)$, then  there  exists a family of curves $\{\gamma_z\}\subset R\setminus I$ connecting $x$ with every $z\in J$ such that $\ell(\gamma_z)\leq C |x-z|$.
\end{enumerate}
Moreover, in both situations we can enforce that $$  \dfrac{|z-z'|}{\mathcal  H^1(J)}\leq C\dfrac{\dist(\gamma_{z}\setminus Q(x,r),\gamma_{z'}\setminus Q(x,r))}{r}\quad \forall z,z'\in J,\;\;\forall r\in(0,\dist(x,J)).$$ 
\end{fact}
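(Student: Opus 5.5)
We would prove Fact \ref{fact} by explicit construction, interpreting $R=[0,L]^2$ and $F=R\setminus I$ (the statement uses both symbols for the square with the triangle removed). By scaling invariance of all three conclusions (lengths, $|x-z|$, and the ratio in the separation estimate are all dilation invariant) we may assume $L=1$. Note that $I$ touches $\partial R$ only at the vertex $v_1=(1/2,0)$, since its upper side lies at height $1/3$ with $|x$-coordinates$|$ within $1/2\pm 1/\sqrt3$; in fact the triangle spans horizontally beyond $[0,1]$ only if $1/\sqrt3>1/2$, which holds, so we should first intersect: $F=R\setminus I$ then consists of the region above height $1/3$, plus two thin wedge regions left and right of $I$ near the bottom, and these are joined through the strip $[0,1]\times[1/3,1]$. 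In any case $F$ is a bi-Lipschitz image (with uniform constant) of a fixed nice domain, e.g. a closed quasidisk-like region with a uniform John/uniform structure, and all constructions below are made by hand in this fixed configuration.

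For (1): $J$ is a segment of length comparable to $1$ inside $F$. We build a ``fan'' of curves: from $x$ go along a fixed quasiconvex path $\sigma$ in $F$ (of length $\le C$, which exists since $F$ is a fixed quasiconvex compact set) to a point $p$ which is at distance $\gtrsim 1$ from $J$ and from which the segments $[p,z]$, $z\in J$, all lie in $F$ after possibly a fixed bi-Lipschitz straightening; then $\gamma_z=\sigma\cup[p,z]$. Since $\mathcal H^1(J)\sim 1$, all lengths are $\le C\sim C\mathcal H^1(J)$. The separation estimate: for $r<\dist(x,J)$, the parts $\gamma_z\setminus Q(x,r)$ all contain the common piece $\sigma$, so we must instead let the fan start at $x$ itself: take $\gamma_z$ to be the image under a fixed bi-Lipschitz map $\Phi$ of the radial segment from $\Phi^{-1}(x)$ to $\Phi^{-1}(z)$ in a convex model region. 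For radial fans in convex sets, points on $[x,z]\setminus Q(x,r)$ and $[x,z']\setminus Q(x,r)$ are separated by $\gtrsim r\cdot$angle$(z,z')\gtrsim r|z-z'|/\dist(x,J)$, and $\dist(x,J)\lesssim 1\sim\mathcal H^1(J)$, giving the required inequality; bi-Lipschitz maps preserve it up to constants (adjusting $Q(x,r)$ to comparable squares).

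For (2): $x$ lies above the triangle and $J$ is centered below $x$ on the bottom edge. If $x_1$ is far from $1/2$ (say $|x_1-1/2|\ge 1/\sqrt3-1/8$ region, where the vertical line misses $I$), straight segments $[x,z]$ avoid $I$ and the radial fan works directly, with $\ell=|x-z|$. Otherwise, the vertical segment below $x$ hits $I$, and we first move from $x$ horizontally-then-down around the nearer lower corner of $I$: concretely, choose a fixed convex (in the model) region $G\subset F$ containing $x$ and $J$ — e.g. the quadrilateral bounded by the line through $x$, the side of $I$ from $v_1$ to $v_2$ (or $v_3$), and the bottom — and again use the radial fan from $x$ in a bi-Lipschitz image of a convex set; since $J$ has midpoint $(x_1,0)$ and length $<1/8$, the relevant geometry stays at distance $\gtrsim 1$ from the difficult vertex, which is what keeps constants uniform.

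The main obstacle is (2) near $x_1\approx 1/2$ with $J$ straddling $v_1$: then $J$ meets the vertex of $I$ and no single convex piece of $F$ contains both $x$ and $J$. There I would split $J$ at $v_1$ into left and right halves and route the left half around $v_2$ and the right half around $v_3$, each via a fan in a convex model region, checking that curves of the two halves separate by $\gtrsim r$ (they pass on opposite sides of $I$, which has diameter $\sim1$), so the estimate $|z-z'|/\mathcal H^1(J)\le C\dist(\cdot)/r$ still holds since the left side is $\le 1$.
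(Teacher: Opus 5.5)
\textbf{Verdict: there are genuine gaps.} Your plan for the hard case of (2) is to split $J$ at $v_1$ and send one pencil around each side of $I$. That is exactly the paper's only indication; everything else is left to the reader there. However, several steps you actually rely on are false.

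\textbf{The geometry and the convex model.} With the printed vertices $L/2\mp L/\sqrt3$, the triangle is not equilateral. Its side $v_1v_2$ meets $\{0\}\times\R$ at height $L/(2\sqrt3)<L/3$. So the bottom wedges are \emph{not} joined to $[0,L]\times[L/3,L]$, and $R\setminus I$ is disconnected. The intended $I$ is the equilateral triangle of height $L/3$, with $v_{2,3}=(L/2\mp L/(3\sqrt3),L/3)$, as for the $I_{k,j}$. For that $I$, the set $F=R\setminus I$ near $v_1$ consists of two closed $60^\circ$ sectors meeting only at $v_1$. Hence $v_1$ is a local cut point, and $F$ is not homeomorphic, let alone bi-Lipschitz, to any convex set. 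The ``radial fan in a convex model'' is therefore not available globally. What is true, writing points as $(s,t)$, is the following. The polygon $F_-=F\cap(\{s\le L/2\}\cup\{t\ge L/3\})$ and its mirror image $F_+$ have chord-arc boundaries with fixed constants, so each is bi-Lipschitz to a disk. In (2), transplanted chords from $x$ to $J\cap F_\pm\subset\partial F_\pm$ then do work, because $|x-z|\ge L/3$ forces them to meet the circle transversally in the model.

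\textbf{The radial-fan separation estimate.} Your bound $\angle(z,z')\gtrsim|z-z'|/\dist(x,J)$ is false. The angle is about $|z-z'|\sin\beta/|x-z|$, where $\beta$ is the angle of incidence at $z$. In (1) the point $x$ is arbitrary. It may lie at a tiny distance $\epsilon$ from $J$, or almost on its line. This is precisely the situation of step (B.1), where the ambient set is the obstacle-free rectangle $E_l$ and the paper notes that $y$ may even lie on $I_x$; convexity does not help there. Suppose $x$ is at distance $\epsilon$ from the midpoint of $J$, and take $z,z'$ near an endpoint of $J$. Then $[x,z]\setminus Q(x,r)$ and $[x,z']\setminus Q(x,r)$ are only about $r\epsilon|z-z'|/\mathcal H^1(J)^2$ apart, whereas $\gtrsim r|z-z'|/\mathcal H^1(J)$ is required. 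In the collinear case the segments even overlap. So straight segments cannot be used in (1). The curves must leave $x$ in directions that depend bi-Lipschitzly on $z$ across a sector of fixed aperture. Only then should they be routed to $z$, either in layers between $x$ and $J$ or around. This is exactly the slack that the weaker bound $\ell(\gamma_z)\le C\mathcal H^1(J)$ in (1) provides.

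\textbf{The straddling case of (2).} The two pencils do \emph{not} separate by $\gtrsim r$: they end at points on opposite sides of $v_1$ with $|z-z'|$ arbitrarily small. The correct reason uses the sector picture. Near $v_1$ the two pencils lie in the two $60^\circ$ sectors, at angle $\ge 60^\circ$ from each other. Each transplanted chord $\gamma_z$ stays at distance $\gtrsim|z-v_1|$ from $v_1$. Hence $\dist(\gamma_z,\gamma_{z'})\gtrsim|z-v_1|+|z'-v_1|\ge|z-z'|$ there. This suffices because $r<\dist(x,J)\lesssim\mathcal H^1(J)$. You must also keep the two pencils apart near $x$, for example by letting them leave $x$ through angular sectors at a fixed positive angle from each other.
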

\begin{proof}
The details of this geometrical proof are left to the reader. We only note that in case $(2)$, if $J\cap \overline I\neq \emptyset$, the family of curves will look like two pencil of curves $P_{1}$ and $P_2$ starting from $x$ and traveling through $R\setminus I$ from the left and right hand sides of $I$ to each of the two connected components of  $J\setminus \overline I$. See Figure \ref{fig:exampe_step_1}.
\end{proof}

\noindent (A)  Connect $x$ with $I_x$ through a family of curves $\Gamma_x=\{\gamma^x_z\}_{z\in I_x}$ whose properties are encoded in the next claim.
\begin{claim}\label{claim:semmes}
{\em There exists a constant $C\geq 1$ and a family $\Gamma_x=\{\gamma^x_z\}_{z\in I_x}$ of unit-speed (injective) curves $\gamma^x_z$ connecting $x$ with every $z\in I_x$ such that \begin{enumerate}
    \item[(i)] $\ell(\gamma^x_z)\leq C|x-z|$ and $\gamma^x_z\subset 4B_{xy}$ for every $z\in I_x$.
    \item[(ii)] The family $\Gamma_x$ satisfies property \eqref{eq:key_property_semmes} from Lemma \ref{lem:Semmes_family} with the constant $C$.
    \end{enumerate}}
\end{claim}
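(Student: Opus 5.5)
We build $\Gamma_x$ by a stopping-scale decomposition along the vertical direction. We work in the regime where \eqref{eq:sit_1_semmes} fails, with $x\in Q$, and recall that by \eqref{eq:distance_x_I_x} we have $\dist(x,I_x)\sim D$ and $x_2\le 2D$.

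Let $k_x$ be the index with $x\in S_{k_x}$, so $x_2\sim\epsilon_{k_x}$ up to the factor coming from $t_k\in(1/3,5/12)$. We descend from $x$ to $I_x$ through the strips $S_{k_x},S_{k_x+1},\dots$ and then through $E_l$.

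\emph{First step.} In a square of side $L\sim x_2$ (or $L\sim D$ if $D$ is larger) containing $x$, at most one row of triangles at the scale $\epsilon_{k_x}$ is relevant. We apply Fact~\ref{fact}(2) to produce a pencil of curves from $x$ to a horizontal segment $J_1$ just below that row. This segment has length comparable to $L/32$ and midpoint directly below $x$. When $x$ sits above a triangle, the pencil splits into two sub-pencils going around it, as described in the proof of Fact~\ref{fact}.

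\emph{Inductive step.} From $J_1$ downward the triangles at smaller scales $\epsilon_k$, $k>k_x$, have size $\le\epsilon_k\ll|J_1|$, since $\epsilon_k<\epsilon_{k-1}/12$. We continue the curves essentially vertically. Each curve detours around every triangle $I_{k,j}$ that it meets, using the shorter side of $\partial I_{k,j}$.

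To keep the separation property \eqref{eq:key_property_semmes}, the detours should be made by a bi-Lipschitz map $\Phi$ of the strip region rather than curve by curve. On each strip $S_k$, $\Phi$ compresses the vertical lines that would hit a triangle into the gaps between consecutive triangles. The gaps have width $\sim\epsilon_k$, while the triangle widths are $\sim\epsilon_k$ as well, so the compression factor is bounded. The total horizontal displacement is controlled by a geometric series in $\epsilon_k$, because $\epsilon_k$ decays at least geometrically. Finally we straight-line-connect to $I_x$ at height $-D/2$ through $E_l$, which is a rectangle and poses no difficulty. We parametrize the resulting curves by arc length and index them by their endpoint $z\in I_x$.

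\emph{Estimate (i).} Each curve has length at most the vertical drop $x_2+D/2\lesssim D$, plus the horizontal detours. The detours are $\lesssim\sum_k\epsilon_k\lesssim\epsilon_{k_x}\lesssim D$. Since $|x-z|\ge D/2$, this gives $\ell(\gamma^x_z)\le C|x-z|$. Moreover all horizontal deviations are $\lesssim D$, so every curve stays inside $4B_{xy}$.

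\emph{Estimate (ii).} This is the expected main obstacle. Near $x$, meaning for $r<\dist(x,J_1)$, the separation property is exactly the ``moreover'' clause of Fact~\ref{fact}. For $r\gtrsim D$, the curves outside $B(x,r)$ lie in the image under $\Phi$ of a family of vertical segments together with straight segments. These are separated by distances comparable to $|z-z'|$, up to the bi-Lipschitz constant of $\Phi$. Since $r\sim D\sim\mathcal H^1(J)$, this is what \eqref{eq:key_property_semmes} requires.

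The delicate point is the bi-Lipschitz constant of $\Phi$. It must be uniform across all scales, so I would construct $\Phi$ strip by strip, each piece $C$-bi-Lipschitz and equal to the identity near the strip boundaries. The uniformity must come from disjointness: triangles at different scales lie in different strips, so the pieces compose without accumulating distortion. The two-pencil splitting at the first step is compatible with this, since each pencil is separated from the other by the triangle, whose size is $\sim r$ there.
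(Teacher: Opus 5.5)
Your outline (a Fact~\ref{fact}(2) pencil at the first relevant row, bounded detours in the lower strips, a straight fan through $E_l$) is reasonable and gives (i). The device you use for (ii), however, does not exist: you need a uniformly bi-Lipschitz map $\Phi$ of each strip that equals the identity near the strip's boundary. In $S_k=[-1,2]\times[\epsilon_k,\epsilon_{k-1}]$ the triangles $I_{k-1,j}$ occupy the heights $[\epsilon_k,t_{k-1}\epsilon_{k-1}]$. Their lower vertices lie exactly on the bottom line $\{y=\epsilon_k\}$, because $\epsilon_k=(t_{k-1}-\frac13)\epsilon_{k-1}$. A vertical line at horizontal distance $\delta$ from a vertex therefore meets the triangle at all heights between $\epsilon_k+\sqrt3\,\delta$ and $t_{k-1}\epsilon_{k-1}$. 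So no map equal to the identity near $\{y=\epsilon_k\}$ can move these lines off the triangle.

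More fundamentally, once $\epsilon_{k-1}\ll\mathcal H^1(J_1)$ your bundle straddles many triangles of one row. Its curves must be pairwise disjoint away from $x$, since a common point would violate \eqref{eq:key_property_semmes} for small $r$. Two curves starting at distance $2\delta$ on either side of the vertical through a vertex must pass on opposite sides of that triangle. At height $h$ above the vertex they are then at distance $\gtrsim h$, however small $\delta$ is. So the bundle has to be torn at every vertex, exactly as Fact~\ref{fact}(2) already tears it at your first step. A continuous redirection inside the strip would have to squeeze the whole bundle through a single gap of width $\sim\epsilon_{k-1}$. That is a compression by a factor $\gtrsim\mathcal H^1(J_1)/\epsilon_{k-1}$, which is unbounded in $k$. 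Your remark that uniformity comes from the strips being disjoint does not address this, because the obstruction lives inside one strip.

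The repair starts from the observation that \eqref{eq:key_property_semmes} only needs a \emph{lower} bound on separation, so tears are harmless. You must then replace bi-Lipschitz by a non-compression property together with a slope bound. With tears, the distance between $\gamma_z\setminus B(x,r)$ and $\gamma_{z'}\setminus B(x,r)$ is an infimum over points at possibly different heights, and it is no longer controlled automatically. This is what the paper does:
\begin{enumerate}
\item It keeps the Euclidean pencil $\{\overline{xz}\}_{z\in I_x}$ as backbone and forces each $\gamma_z$ through $\overline{xz}\cap\mathcal B_k$ on every line $y=\epsilon_k$.
\item It uses Fact~\ref{fact}(2) only in $S_{k_x}$, and only when $x$ lies above the row there.
\item In each later strip it replaces $\overline{xz}\cap S_k$ by two segments meeting on $\mathcal A_{k-1}$, chosen so that the horizontal separation at each height is at least half that of the straight segments, as in \eqref{eq:case_k>kx}.
\item Slopes within $\pi/4$ of the vertical then reduce the distance between curves to this same-height separation.
\end{enumerate}

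Your verification of (ii) also skips the range $\dist(x,J_1)\le r\lesssim D$, which is long when $x_2\ll D$. The property does hold there for your construction: below $J_1$ the separation is $\sim|z-z'|\,x_2/D$, and your fan through $E_l$ interpolates linearly up to $|z-z'|$ on $I_x$. But this has to be checked; the paper splits into $r<x_2-\epsilon_{k_x}$, $x_2-\epsilon_{k_x}\le r<x_2$ and $x_2\le r<x_2+D/2$.

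Two smaller points. First, $x_2\sim\epsilon_{k_x}$ is false in general, since $\epsilon_k/\epsilon_{k-1}=t_{k-1}-\frac13\to0$. Second, drop the option $L\sim D$ for the first square: when $x_2\ll D$ such a square meets infinitely many rows.
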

\noindent We note that the family of curves $\Gamma_x$ could be taken to be $2$-quasiconvex, but for the sake of simplicity, we do not aim for the best constant $C$ in Lemma \ref{claim:semmes}. The proof of the claim appears below. We only mention that from Claim \ref{claim:semmes} (i) and \eqref{eq:distance_x_I_x}, 
    \begin{equation}\label{eq:quasiconvexity_Gamma_x}
        \ell(\gamma^x_z)\leq C|x-z|\leq C\left( \dist(x, I_x)+ \dfrac{\text{diam}(I_x)}{2}\right)\leq C\left(\tfrac{5}{2}D+\tfrac{D}{64}\right)\lesssim D\quad \forall z\in I_x.
    \end{equation}

\smallskip

\noindent (B) Connect $y$ with $I_x$ using another family of adequate curves $\Gamma_y=\{\gamma^y_z\}_{z\in I_x}$.

\medskip

\noindent For every $z\in I_x$ we build unit-speed (injective) curves $\gamma^y_z$ connecting $y$ with $z$ such that $\ell(\gamma^y_z)\leq C |y-z|$ for some constant $C\geq 1$, $\gamma^y_z\subset 4B_{xy}$ and such that \eqref{eq:key_property_semmes}  holds for the family $\Gamma_y=\{\gamma^y_z\}_{z\in I_x}$. 

\smallskip

\begin{enumerate}
    \item[(B.1)] In  case  $y_2<0$, note that we have $$\dist(y,I_x)\leq |(y_1,y_2)-(x_1,y_2)|+\tfrac{D}{2} \leq |y-x|+\tfrac{D}{2}\leq \tfrac{3}{2}D.$$
    This is an easy situation (Fact \ref{fact} (1)) where there exists some constant $C\geq 1$ such that we can connect $y$ with 
$I_x$ through a family of curves $\Gamma_y=\{\gamma^y_z\}_{z\in I_x}\subset 4B_{xy}\cap E_l$ satisfying property \eqref{eq:key_property_semmes}  and so that for all $z\in I_x $  
\begin{equation}\label{eq:dist_y_I_x}
\ell(\gamma^y_z)\lesssim D.    
\end{equation}
Note that the previous curves need not be quasiconvex, since $y$ could belong to $I_x$ in some situations. 

\smallskip

\item[(B.2)] In case $y_2>0$ and $D<\frac{1}{2}\min\{x_2, y_2\}$ we let $\Gamma_y=\Gamma^1_y\cup\Gamma^2_y$. The family $\Gamma^1_y=\{\gamma^{1,y}_{z'}\}_{z'\in I_y}$ consists of $C$-quasiconvex curves connecting $y$ with $I_y$, which are constructed following the same method as that of $\Gamma_x$ (see Claim \ref{claim:semmes}). In particular, the family $\Gamma^1_y\subset 4 B_{xy}$ satisfies property \eqref{eq:key_property_semmes}  and for every $z'\in I_y$, by using \eqref{eq:distance_y_I_y},
\begin{equation}\label{eq:est_curve_Gamma'_y}
\ell(\gamma^{1,y}_{z'})\leq C |y-z'| \leq C \left( \dist(y,I_y)+ \dfrac{\text{diam}(I_y)}{2}\right) \leq C\left( \tfrac{10 D}{3}+\tfrac{D}{64}\right)\lesssim 7D.
\end{equation} 
 The family $\Gamma^2_y=\{ \gamma^{2,y}_{z}\}_{z\in I_x}$ consists of straight line-segments connecting every point $z'\in I_y$ with every  $z\in I_x$, following a linear bijection between these two sets $\phi:I_x\to I_y$. That is, $\Gamma^2_y =\{ \overline{\phi(z)z}\}_{z\in I_x}$. Here, for $a,b\in \R^2$, we write $\overline{ab}=\{at+b(1-t):\,t\in [0,1]\}$ to denote the segment between $a$ and $b$, which may also be understood as a curve connecting $a$ with $b$.\\
 Furthermore, for all $z\in I_x$,
\begin{equation}\label{eq:dist_z-phi(z)}
\ell\left(\overline{\phi(z)z}\right)=|\phi(z)-z|\leq |y_1-x_1|+\tfrac{D}{6}\leq \tfrac{7D}{6}.    
\end{equation}
We finally define
$\Gamma_y=\Gamma^1_y\cup \Gamma^2_y=\{\gamma^y_z\}_{z\in J},$
where  $\Gamma_y\subset 4 B_{xy}$. It is also  clear that, thanks to the geometric location of $I_x$ and $I_y$ (this is, $\dist(I_x,I_y)\sim D$, $\pi_2(I_x)=-D/2$ and $\pi_2(I_y)=-D/3$) and using that $\Gamma^1_y$ satisfies \eqref{eq:key_property_semmes}, the family $\Gamma_y$ also satisfies \eqref{eq:key_property_semmes}. Finally, by using \eqref{eq:est_curve_Gamma'_y} and \eqref{eq:dist_z-phi(z)}, for every $z\in I_x$,
\begin{equation}\label{eq:est_curve_Gamma_y}
\ell(\gamma^{y}_{z})=\ell\left(\gamma^{1,y}_{\phi(z)} \right) + |\phi(z)-z|\leq C |y-\phi(z)|  +|\phi(z)-z|\leq CD+\tfrac{7}{6}D\lesssim D.
\end{equation}

\end{enumerate}

\medskip

\noindent (C) We define the family $\Gamma_{xy}=\Gamma_x\cup \Gamma_y=\{\gamma_t\}_{t\in I}=\{\gamma_z=\gamma^x_z\cup \gamma_z^y\}_{z\in I_x}$  as the concatenation of the curves from $\Gamma_x$ with those from $\Gamma_y$, which join $x$ with $y$. Since $\Gamma_x,\Gamma_y\subset 4B_{xy}$ we have $\Gamma_{xy}\subset 4B_{xy}$. Moreover, for every $\gamma_z=\gamma^x_z\cup \gamma_z^y$, using \eqref{eq:quasiconvexity_Gamma_x}, and  \eqref{eq:dist_y_I_x} or \eqref{eq:est_curve_Gamma_y},
\begin{align*}
\ell(\gamma_z)&\leq \ell(\gamma^x_z)+\ell(\gamma^y_z)\lesssim  D= |x-y|.
\end{align*}
Recall also that both $\Gamma_x$ and $\Gamma_y$ satisfy property \eqref{eq:key_property_semmes}.
In summary, all properties $(1)-(3)$ of Lemma \ref{lem:Semmes_family} are satisfied for $\Gamma_{xy}$. It only remains to prove the Claim \ref{claim:semmes}.

\smallskip

\begin{proof}[Proof of Claim \ref{claim:semmes}]
Recall that, for some constant $C\geq 1$, we aim to define a family of $C$-quasiconvex curves $\Gamma_x=\{\gamma_z\}_{z\in I_x}\subset 4B_{xy}$ (hereafter, we omit the subscript $x$) connecting $x$ with every  $z\in I_x$ such that \eqref{eq:key_property_semmes} holds for $C$. In fact, the construction can be tailored so that the curves are  $2$-quasiconvex, which directly gives $\Gamma_x\subset 4B_{xy}$. An illustration of this construction is given in Figure \ref{fig:example_final_illustration}.

  \begin{figure}[htbp]
    \begin{minipage}{0.45\textwidth}
\begin{tikzpicture}[
    scale=0.65,
    % Define styles for the rays to automatically place arrows along the paths
    redray/.style={draw=red!80!black, thick, decoration={markings, mark=at position 0.6 with {\arrow{>}}}, postaction={decorate}},
        blueray/.style={draw=blue!80!black, thick, decoration={markings, mark=at position 0.6 with {\arrow{>}}}, postaction={decorate}},
    blackray/.style={draw=black, thick, decoration={markings, mark=at position 0.6 with {\arrow{>}}}, postaction={decorate}},
    % Style for the arrowheads on the short middle segments
    midarrow/.style={decoration={markings, mark=at position 0.5 with {\arrow{>}}}, postaction={decorate}}
]

    % --- Coordinates & Main Points ---
    \coordinate (X) at (0, 8);
\node at (0,8.3) {$x$};

\draw[fill=red!30] (X) -- (-1.6, 6.9) to[bend left=-25] (-2, 6.5) --  (-0.55, 4) -- (-1.5, -3) -- (1.5,-3) --  (0.76,2.5)   --  (1.1,3.1)   -- (0.55,4)  --  (2.4, 6.5)   --  cycle;

\draw[thick, fill=gray!30] (-1.8, 3.1) -- (-1.0, 3.1) -- (-1.4, 2.5) -- cycle;
\draw[thick, fill=gray!30] (-1.6, 6.5) -- (2.4, 6.5) -- (0.4, 4) -- cycle;

\draw[thick, fill=gray!30]  (0.2, 3.1) -- (1.0, 3.1) -- (0.6, 2.5) -- cycle;

\draw[thick, fill=white]  (X) -- (2.4, 6.5)-- (-1.6,6.5) -- cycle;
\draw[thick, fill=white]  (0.48,4) -- (1,3.1)-- (0.2,3.1) -- cycle;

\draw[thick, fill=white]  (-0.5,2.5) -- (-0.63, 2.15)-- (-0.38, 2.15) -- cycle;
\draw[thick, fill=white]  (0,2.5) -- (-0.13, 2.15) -- (0.13, 2.15) -- cycle;
\draw[thick, fill=white] (0.5,2.5) -- (0.37, 2.15) -- (0.64, 2.15)  --  cycle;

 \foreach \x in {-2.5, -2, -1.5, -1, -0.5, 0, 0.5, 1, 1.5, 2} { \draw[thick, fill=gray!30]     (\x-0.12, 2.15) -- (\x+0.12, 2.15) -- (\x, 2) -- cycle;}

    % --- Horizontal Boundary Lines ---
    \draw[dashed, thick] (-4.5, 4) -- (4.5, 4) node[right, xshift=4pt] {$\varepsilon_k$};
    \draw[dashed, thick] (-4.5, 2.5) -- (4.5, 2.5) node[right, xshift=4pt] {$\varepsilon_{k+1}$};
    \draw[thick] (-4.5, 2) -- (4.5, 2) node[right, xshift=4pt] {$0$};
    \draw[dashed, thick] (-4.5, -3) -- (4.5, -3) node[right, xshift=4pt] {$-\frac{|y-x|}{2}$};

    % --- Triangles ---
    % Large top inverted triangle
    \draw[thick] (-1.6, 6.5) -- (2.4, 6.5) -- (0.4, 4) -- cycle;

    % Two small inverted triangles
    \draw[thick] (-1.8, 3.1) -- (-1.0, 3.1) -- (-1.4, 2.5) -- cycle;
    \draw[thick] (0.2, 3.1) -- (1.0, 3.1) -- (0.6, 2.5) -- cycle;

    % Row of small triangles along the '0' line
    \foreach \x in {-2.5, -2, -1.5, -1, -0.5, 0, 0.5, 1, 1.5, 2} {
        \draw[thick] (\x-0.12, 2.15) -- (\x+0.12, 2.15) -- (\x, 2) -- cycle;
    }

    % --- Segment (I'_x) ---
    % Tilted rectangle to match the sketch
    \coordinate (UL) at (-1.5, -3);
    \coordinate (UR) at (1, -3);
    \draw[thick] (UL) -- (UR) ;

\node at (0,-3.6) {\small{$I_x$}};
\node at (0.5,-2.65) {$z_2$};

 \draw[blueray] (X) -- (-1.5, -3) node[above left, text=black] {$z_1$};
\draw[blueray] (X) -- (1.05, -3);
\draw[blueray] (X) -- (1.5, -3)  node[above right, text=black] {$z_3$};

\draw[redray] (X) -- (-1.6, 6.5);
\draw[redray] (-1.6, 6.5) -- (0.4, 4);
\draw[redray] (0.4,4) -- (0.2, 3.1);
\draw[redray] (0.2,3.1) -- (0.6, 2.5);
\draw[redray] (0.6,2.5) -- (0.7, 2.15);
\draw[redray] (0.7,2.15) -- (0.57, 2);

\draw[redray] (X) -- (-1.6, 6.9);
\draw[redray] (-1.6,6.9) to[bend left=-25] (-2, 6.5);
\draw[redray] (-2, 6.5) -- (-0.55, 4) node[midway, left] {$\gamma_{z_1}$};
\draw[redray] (-0.55,4) -- (-1.5, -3);

%\draw[redray] (-0.5,2.5) -- (-0.63, 2.15);
%\draw[redray] (-0.5,2.5) -- (-0.38, 2.15);

%\draw[redray] (0,2.5) -- (-0.13, 2.15);
%\draw[redray] (0,2.5) -- (0.13, 2.15);

%\draw[redray] (0.5,2.5) -- (0.37, 2.15);
%\draw[redray] (0.5,2.5) -- (0.64, 2.15);

 %\draw[redray] (X) -- (2.4, 6.6)  
  \draw[redray] (X) -- (2.45, 6.5) node[right, midway] {$\gamma_{z_3}$};
  \draw[redray] (2.45, 6.5) -- (0.55, 4);
  \draw[redray] (0.55, 4) -- (1.1, 3.1);
  \draw[redray] (1.1, 3.1) -- (0.75, 2.5);
 \draw[redray] (0.75, 2.5) -- (1.5, -3);

\draw[redray] (-0.82, 2) -- (-1.5, -3);

 \draw[redray] (0.57, 2) -- (1.05, -3) node[midway,left] {$\gamma_2$};

    % Central straight red ray

    % --- Ray Paths ---
    % Left bundle of red rays

\end{tikzpicture}

\caption{Family of curves $\Gamma_x$}
\label{fig:example_final_illustration}
\end{minipage}
\end{figure}

For the given point $x=(x_1,x_2)\in Q$ we fix $k_x\in\N$ so that $x_2\in [\epsilon_{k_x},\epsilon_{k_x-1})$. We begin by employing the standard method for constructing a Semmes pencil of curves in the Euclidean setting (see \cite[Example 6.1]{KLS15}), connecting $x$ to each point in $I_x$ via straight line segments. Call this family 
$$\{\overline{xz}\}_{z\in I_x}.$$
It is not difficult to see that $\{\overline{xz}\}_{z\in I_x}$ satisfies \eqref{eq:key_property_semmes} for some constant $C\geq 1$. However, these curves may intersect some of the removed triangles, so in order to handle this difficulty, we will perturb these segments as follows. Consider the `pencil' $$P_x=\bigcup\{\overline{xz}:\, z\in I_x\},$$
and define the sets
\begin{align*}
\mathcal A_k&=[0,1]\times \{t_k\epsilon_k\} &;\quad \mathcal B_k&=[0,1]\times \{\epsilon_k\}\;\;&k\in\N \\
 A_{k}&=\mathcal A_k\cap P_x  &;\quad   B_{k}&=\mathcal B_k\cap P_x \; &k\geq k_x.  
\end{align*}
 The curves $\Gamma_x=\{\gamma_z:\, z\in I_x\}$ will be defined by modifying the segments $\overline{xz}$ on each strip $S_k=([-1,2]\times [\epsilon_{k},\epsilon_{k-1}])$ for every $k\geq k_x$. We leave these segments unaltered on the lower part $E_l= [-1,2]\times [-1,0]$, as well as on the boundaries $\partial S_k$. That is 
\begin{equation}\label{eq:def_Gamma_x}
\gamma_z=\left(\bigcup_{k\geq k_x}\gamma_{z,k}\right)\cup \left(\overline{xz}\cap E_l\right),\quad z\in I_x,
\end{equation}
where the curves $\{\gamma_{z,k}\}_{k\geq k_x}\subset S_{k}\cap E$ are defined such that, for some uniform constant $C\geq 1$, they satisfy the following properties:

\smallskip

\begin{enumerate}
    \item[(i)] For all $k\geq k_x$ we have $\gamma_z\cap B_{k}=\overline{xz}\cap B_{k}$. 
        \item[(ii)] For all $k\geq k_x$.  $\ell(\gamma_{z,k})\leq C\ell(\,\overline{xz} \cap S_k)$. (One may take $C=2$).
    \item[(iii)] For every $k>k_x$ we have $\gamma_{z,k}=\gamma^{1}_{z,k}\cup \gamma^{2}_{z,k}$ as the union of  two segments with $\gamma^{1}_{z,k}\cap \gamma^{2}_{z,k}\in \mathcal A_k$. 
    \item[(iv)] The family $\Gamma_x=\{\gamma_z\}_{z\in I_x}$ satisfies property \eqref{eq:key_property_semmes} for the constant $C$.
\end{enumerate}

\begin{remark}
    Property \eqref{eq:key_property_semmes} from Lemma \ref{lem:Semmes_family} is easier to check if we use the $\ell_1$-distance, which is bi-Lipschitz equivalent to the euclidean distance and  if we use squares $Q(x,r)$ instead of balls $B(x,r)$ (note that $Q(x,r/2)\subset B_{\ell_1}(x,r)\subset Q(x, 2r)$ for all $r>0$). Hence, throughout the remainder of the proof of Claim \ref{claim:semmes}, we adopt this new metric.
\end{remark}

To define the modified curves $\{\gamma_z\}$ we need to distinguish the case $k=k_x$ from the case $k>k_x$. Abusing of notation, we may write $\gamma_z=\gamma_{z'}$ whenever $z'\in\gamma_z$. Also, if for a given $z\in I_x$ we set $z_k=\gamma_z\cap B_k=[x,z]\cap B_k$, we will write $\gamma_{z,k}=\gamma_{z_k,k}$ to mean $\gamma_z\cap S_k=\gamma_{z_k}\cap S_k$.

\medskip

{\bf  Case $\mathbf{k=k_x}$} (see Figure \ref{fig:exampe_step_1}): Connect $x$ with every point $z\in B_{k_x}$ through a family of $2$-quasiconvex curves $\{\gamma_{z,k_x}\}_{z\in B_{k_x}}\subset S_{k_x}\cap E$   such that $\ell(\gamma_{z,k_x})\leq 2\ell([x,z] \cap S_{k_x})$ and also for every $z,z'\in B_{k_x}$
\begin{equation}\label{eq:key_property_step_1}
   \dfrac{ |z-z'|}{\mathcal H^1(B_{k_x})}\lesssim \dfrac{\dist_{\ell_1}(\gamma_{z,k_x}\setminus Q(x, 2r),\gamma_{z',k_x}\setminus Q(x,2r))}{r}\quad \forall r\in (0,x_2-\epsilon_{k_x}.
   )
\end{equation}
In case $P_x\cap S_{k_x}\subset E$  we do not modify the initial pencil of curves and we let $\gamma_{z,k_x}=\overline{xz}\cap S_{k_x}$. Otherwise, we must have $x_2>t_{k_x-1}\epsilon_{k_x-1}$. Note that by \eqref{eq:distance_x_I_x}, we have $D/2\leq \dist(x,I_x)\leq 5D/2$, so the angle of the pencil $P_x$ at its vertex $x$ must be some value $\alpha\in \left[\pi/80, \pi/16\right]$, and since $x_2\in (t_{k_x-1}\epsilon_{k_x-1},\epsilon_{k_x-1}]$, the set $B_{k_x}$ may contain at most one lower vertex of a unique triangle from the family $\{I_{{k_x-1},j}\}_{j}$.  Call this vertex $v$ and the corresponding triangle $I$. We now use Fact \ref{fact} (2) for the segment $B_{k_x}\subset Q_x$ lying on the lower side of some square $Q_x\subset S_{k_x}$. Note that the square $Q_x$ has side-length $\ell(Q_x)=\epsilon_{k_x-1}-\epsilon_{k_x} $ and $\dist(x, B_{k_x})\geq \mathcal H^1(B_{k_x})\sim \epsilon_{k_x-1}$ because we are assuming $x_2>t_{k_x -1}\epsilon_{k_x-1}$. This way, we construct a  collection of curves
$\{\gamma_{z,k_x}\}_{z\in B_{k_x}}\subset Q_x\setminus I\subset S_{k_x}$ such that 
$$\ell(\gamma_{z,k_x})\leq C\ell(\,\overline{xz}\cap S_{k_x})$$
and \eqref{eq:key_property_step_1} holds for every $r\in (0, x_2-\epsilon_{k_x})$.

\medskip

%And we can also enforce, if necessary, that $$\dfrac{\mathcal H^1(P_1\cap \mathcal A_{k_x -1})}{\mathcal H^1(B_{k_x,1})}\sim \dfrac{\mathcal H^1(P_2\cap \mathcal A_{k_x -1})}{\mathcal H^1(B_{k_x,2})} $$mand that $\gamma_{z,k_x}\cap [-1,2]\times [\epsilon_{k_x}, t_{k_x-1}\epsilon_{k_x-1}] $ are just straight line-segments.

{\bf Case $\mathbf{k>k_x}$} (see Figure \ref{fig:example_step_2}):  For every $k> k_x$, and every $z\in I_x$ we connect $z_{k-1}:=\overline{xz}\cap B_{k-1}$ with $z_k:=\overline{xz}\cap B_{k}$ by means of a curve $\gamma_{z,k}\subset E$ such that  $$\ell(\gamma_{z,k})\leq 2\ell(\overline{xz} \cap S_{k_x})\sim \epsilon_{k-1}$$ and so that for all $z,z'\in B_{k+1}$, 
 $(v_1,v_2)\in\gamma_{z,k}$ and $(v'_1,v_2)\in\gamma_{z',k}$, 
\begin{equation}\label{eq:case_k>kx}
|v_1-v'_1|\geq \dfrac{1}{2}\dist\left(\overline{xz}\cap ([-1,2]\times\{v_2\}),\overline{xz'}\cap ([-1,2]\times \{v_2\})\right) .
\end{equation}
The curves $\{\gamma_{z,k}\}$ can be constructed to be mutually disjoint, each consisting of the union of two segments that intersect on $A_{k-1}$. This modification should be carried out so that $\ell(\gamma_{z,k})\leq 2\ell([x,z] \cap S_{k_x})$ and such that the family $\{\gamma_{z,k}\}$ is not too highly compressed, so that \eqref{eq:case_k>kx} remains valid. The geometric separation of the family of removed triangles $\{I_{k-1,j}\}_{j=1}^{n_{k-1}}$ ensures that such a construction is always feasible.
%More details in summer version

\medskip

After having defined the curves $\gamma_{z,k}$ on each strip $S_k$ for every $k\geq k_x$, recall that we  define the family $\{\gamma_z\}_{z\in I_x}$ as in \eqref{eq:def_Gamma_x}. By construction, properties (i), (ii), (iii) are clear. For example,
$$\ell(\gamma_z)=\sum_{k\geq k_x}\ell(\gamma_z,k)+\ell(\,\overline{xz}\cap E_l)\leq  C\sum_{k\geq k_x}\ell(\,\overline{xz})+\ell(\,\overline{xz}\cap E_l)=C |x-z|$$
It remains to check (iv). We start by noticing that for $z,z'\in I_x$ and some $k\geq k_x$, if we call $z_k=  \gamma_z\cap B_k $ and $z'_k= \gamma_{z'}\cap B_k$, then $\gamma_z=\gamma_{z_k}$,  $\gamma_{z'}=\gamma_{z'_k}$ and 
$$\dfrac{|z-z'|}{\mathcal H^1(J)}=\dfrac{|z_k-z'_k|}{\mathcal H^1(B_k)}. $$
Henceforth, for any $z,z'\in I_x$ and given any $r>0$ we distinguish three cases:

\begin{itemize}
    \item If $r\in (0,x_2-\epsilon_{k_x})$ we have, by using \eqref{eq:key_property_step_1}, %{\color{red} Explain more the first inequality?}
\begin{align*}
\dist_{\ell_1}(\gamma_z\setminus Q(x,2r),\gamma_{z'}\setminus Q(x,2r))&\gtrsim \dist_{\ell_1}(\gamma_{z_{k_x},k_x}\setminus Q(x,2r),\gamma_{z'_{k_x},k_x}\setminus Q(x,2r)) \\
&\gtrsim \dfrac{r }{\mathcal H^1(B_{k_x})}|z_{k_x}-z'_{k_x}| =\dfrac{r}{\mathcal H^1(J)}|z-z'| .
\end{align*}
\item If $r\in [x_2-\epsilon_{k_x},x_2)$, we have
\begin{equation}
\dist_{\ell_1}(\gamma_z\setminus Q(x,2r),\gamma_{z'}\setminus Q(x,2r))=\min_{t\geq r}\{ \dist_{\ell_1}(\gamma_z\cap ([-1,2]\times \{x_2-t\}),\gamma_{z'}\cap ([-1,2]\times \{x_2-t\}))\}, \end{equation}
which  follows because for every $z\in I_x$, the curve $\gamma_z\cap [-1,2]\times [ -1,\epsilon_{k_x}]$ maintains a slope relative to the vertical axis within the range $[-\pi/4,\pi/4]$. Then, by also using
\eqref{eq:case_k>kx},
\begin{align*}
    \dist_{\ell_1}(\gamma_z\setminus Q(x,2r),\gamma_{z'}\setminus Q(x,2r))&\geq \dist\left(\,\overline{xz}\cap ([-1,2]\times\{x_2-r\}),\overline{xz'}\cap ([-1,2]\times \{x_2-r\})\right)\\
    &\gtrsim\dfrac{r}{C\mathcal H^1(J)}|z-z'|.
\end{align*}

 \item If $r\in [x_2,x_2+D/2)$, then 
    $$\dist_{\ell_1}(\gamma_z\setminus Q(x,2r),\gamma_{z'}\setminus Q(x,2r))= \dist_{\ell_1}(\overline{xz}\setminus Q(x,2r),\overline{xz'}\setminus Q(x,2r))\gtrsim \dfrac{r |z-z'|}{C\mathcal H^1(J)}.$$
\end{itemize}

\end{proof}

\begin{figure}[htbp]

\begin{minipage}{0.45\textwidth}
     
    \begin{tikzpicture}[
    scale=0.75,
    % Estilo para colocar una flecha en el centro de la línea
    mid_arrow/.style={
        postaction={decorate},
        decoration={
            markings,
            mark=at position 0.55 with {\arrow{Stealth[scale=1.1]}}
        }
    },
    % Estilo para las líneas rojas (un rojo un poco más oscuro para que se lea bien)
   > = stealth,
    curve_red/.style={thick, red!70!black},
    curve_blue/.style={thick, blue!70!black},
]

% ==========================================
% 1. LÍNEAS BASE (Sólida y punteada)
% ==========================================
\coordinate (L_left) at (1, 0);
\coordinate (L_right) at (7.4, 0);
\coordinate (D_left) at (1, 3.5);
\coordinate (D_right) at (7.4, 3.5);

\draw (L_left) -- (L_right) node[right] {$\epsilon_{k_x}$};
\draw[dashed] (D_left) -- (D_right) node[right] {$t_{k_x-1}\epsilon_{k_x-1}$};

% ==========================================
% 2. COORDENADAS PRINCIPALES
% ==========================================
% Puntos en la línea inferior
\coordinate (V0) at (3.8, 0);
\coordinate (V)  at (5.2, 0);
\coordinate (V1) at (5.0, 0);
\coordinate (V2) at (3,3.5);
\coordinate (V3) at (7,3.5);
\coordinate (V4) at (7.1,3.5);

% Punto superior (x)
\coordinate (X) at (4.5, 5.2);

% Puntos de intersección en la línea punteada para las funciones
\coordinate (P0) at (2.4, 3.5); % Inicio de phi_{v_0}

% ==========================================
% 3. TRIÁNGULOS Y LÍNEAS NEGRAS
% ==========================================

% TRIANGULO
\draw[thick, fill=gray!30] (V1) -- (V2) -- (V3) -- cycle;

\draw[fill=red!30] (X) -- (2.7, 4.1) to[bend left=-25] (P0) -- (V0) -- (V1) -- (V2) -- cycle;
\draw[fill=red!30] (X) -- (V4) -- (V) -- (V1) -- (V3) -- cycle;

\draw[curve_blue, postaction={decorate}, 
    decoration={
        markings,
        mark=at position 0.25 with {\arrow{>}},
        mark=at position 0.75 with {\arrow{>}}
    }
]  (X)  -- (V);
\draw[curve_blue, postaction={decorate}, 
    decoration={
        markings,
        mark=at position 0.25 with {\arrow{>}},
        mark=at position 0.75 with {\arrow{>}}
    }
]  (X)  -- (V0);

% Etiquetas de los nodos inferiores y punto x
\fill (V0) circle (1.5pt) node[below left] {$z_0$};
\fill (V1) circle (1.5pt) node[below] {$v$};
\fill (V) circle (1.5pt) node[below right] {$z_1$};
\node at (4.6,-0.55) {\small{$B_{k_x}$}};

\node at (1.6,3.8) {$\mathcal A_{k_x-1}$};
\node[above right] at (X) {$x$};

\draw[thick] (P0) -- (V2) ;

% ==========================================
% 4. TRAYECTOS ROJOS (con flechas)
% ==========================================
% Caídas principales
\draw[curve_red, mid_arrow] (P0) -- (V0) node[midway, left, text=black] {$\gamma_{z_0}$};
%\draw[curve_red, mid_arrow] (P1) -- (V) node[midway, left, text=black] {$\varphi_{v_1}$};

\draw[curve_red, mid_arrow] (V4) -- (V) node[midway, right, text=black] {$\gamma_{z_1}$};
\draw[curve_red, mid_arrow] (X) -- (V3);
\draw[curve_red, mid_arrow] (X) -- (V4);

% Estructura roja superior (Lift/Mapeo)
\coordinate (MidTop) at (V2);  % Desplazamiento sobre la línea punteada
\coordinate (HighTop) at (2.7, 4.1) ; % Subida vertical

% Línea quebrada base (horizontal y vertical)
\draw[curve_red, dashed]  (MidTop) -- (HighTop);

% Flecha curva desde P0 hasta HighTop
\draw[curve_red, ->, >=Stealth] (HighTop) to[bend left=-25] (P0) ;
\draw[curve_red, ->, >=Stealth] (2.85,3.75) to[bend left=-25] (2.7,3.5) ;

% Flechas hacia y desde X
\draw[curve_red, mid_arrow] (X) -- (HighTop) ;
\draw[curve_red, mid_arrow] (X) -- (2.85,3.75) ;
\draw[curve_red, mid_arrow] (2.7,3.5) -- (4.5,0) ;
\draw[curve_red, mid_arrow] (X) -- (V2);
\draw[curve_red, mid_arrow] (V2) -- (V1) node[midway, right, text=black] {$\gamma_{v}$};

\end{tikzpicture} 
\caption{Case $k=k_x$ when constructing $\Gamma_x$.}
\label{fig:exampe_step_1}
\end{minipage}
%
%\hfill
\begin{minipage}{0.45\textwidth}
%\centering
\begin{tikzpicture}[scale=0.61,
    > = stealth,
    curve_red/.style={thick, red!70!black},
    curve_blue/.style={thick, blue!70!black},
]

% Coordinates for top points
\coordinate (z0p) at (2, 6);
\coordinate (z1p) at (2.95, 6);
\coordinate (zp)  at (4.7, 6);
\coordinate (z2p) at (8.6, 6);
\coordinate (z3p) at (8.75, 6);
% Coordinates for bottom points
\coordinate (z0)  at (1.2, 0);
\coordinate (z1)  at (2.3, 0);
\coordinate (z)   at (4.4, 0);
\coordinate (z2)  at (9.5, 0);
\coordinate (z3) at (9.8, 0);
% Horizontal lines
\draw (0, 6) -- (11.8, 6) node[right] {$\varepsilon_k$};

\draw (0, 0) -- (11.8, 0) node[right] {$\varepsilon_{k+1}$};

% Triangles (cones)
\draw[thick, fill=gray!30] (0.5, 2.5) -- (4.1, 2.5) -- (z1) -- cycle;
\draw[thick, fill=gray!30] (7.7, 2.5) -- (11.3, 2.5) -- (9.5,0) -- cycle;

\draw[fill=red!30] (z0p) -- (-0.2 ,2.5) -- (z0) -- (z1) -- (0.5, 2.5) -- (z1p) -- cycle;

\draw[fill=red!30] (z1p) -- (4.1, 2.5) -- (z1) -- (z2) -- (7.7, 2.5) -- (z2p) -- cycle;

\draw[fill=red!30] (z2p) -- (11.3, 2.5) -- (z2) -- (z3) -- (11.5, 2.5) -- (z3p) -- cycle;

\draw[curve_red, postaction={decorate}, 
    decoration={
        markings,
        mark=at position 0.25 with {\arrow{>}},
        mark=at position 0.75 with {\arrow{>}}
    }
]  (z0p) -- (-0.2 ,2.5) -- (z0);
\node[left] at (0.1, 2.9) {$\gamma_{z_0}$};

% \psi_{z_0} (blue)
\draw[curve_blue, postaction={decorate}, 
    decoration={
        markings,
        mark=at position 0.4 with {\arrow{>}},
        mark=at position 0.75 with {\arrow{>}}
    }
]  (z0p)  -- (z0);
\node[left] at (1.7, 2.85) {\small{$\overline{xz_0}$}};

% \psi_{z_1} (blue)
\draw[curve_blue, postaction={decorate}, 
    decoration={
        markings,
        mark=at position 0.25 with {\arrow{>}},
        mark=at position 0.75 with {\arrow{>}}
    }
]  (z1p)  -- (z1);
\draw[curve_blue, postaction={decorate}, 
    decoration={
        markings,
        mark=at position 0.25 with {\arrow{>}},
        mark=at position 0.75 with {\arrow{>}}
    }
]  (z3p)  -- (z3);

\draw[curve_red] (z1p) -- (0.5, 2.5) -- (z1);

\node[right] at (1.6, 3.4) {\small{$\overline{xz_1}$}};

\draw[curve_red, postaction={decorate}, 
    decoration={
        markings,
        mark=at position 0.25 with {\arrow{>}},
        mark=at position 0.75 with {\arrow{>}}
    }
]  (z1p) --(4.1, 2.5) -- (z1);
\node[right] at (3.3, 4.4) {$\gamma_{z_1}$};

% [x,z] (blue)
\draw[curve_blue, postaction={decorate}, 
    decoration={
        markings,
        mark=at position 0.35 with {\arrow{>}},
        mark=at position 0.75 with {\arrow{>}}
    }
]  (zp) -- (z);
\node[left] at (4.45, 1.1) {\small{$\overline{xz}$}};

\draw[curve_red, postaction={decorate}, 
    decoration={
        markings,
        mark=at position 0.4 with {\arrow{>}},
        mark=at position 0.75 with {\arrow{>}}
    }
]  (zp) -- (4.9, 2.5) --  (z);
\node[right] at (4.8, 1.5) {$\gamma_z$};

\draw[curve_red, postaction={decorate}, 
    decoration={
        markings,
        mark=at position 0.25 with {\arrow{>}},
        mark=at position 0.75 with {\arrow{>}}
    }
]  (z2p) -- (7.7, 2.5) -- (z2);
\node[left] at (8, 3.8) {$\gamma_{z_2}$};

\draw[curve_red, postaction={decorate}, 
    decoration={
        markings,
        mark=at position 0.25 with {\arrow{>}},
        mark=at position 0.75 with {\arrow{>}}
    }
]  (z2p) -- (11.3, 2.5) -- (z2);

\draw[curve_red, postaction={decorate}, 
    decoration={
        markings,
        mark=at position 0.25 with {\arrow{>}},
        mark=at position 0.75 with {\arrow{>}}
    }
]  (z3p) -- (11.5, 2.5) -- (z3);
\node[left] at (11.5, 3.8) {$\gamma_{z_3}$};

% \psi_{z_2} (blue)
\draw[curve_blue, postaction={decorate}, 
    decoration={
        markings,
        mark=at position 0.25 with {\arrow{>}},
        mark=at position 0.75 with {\arrow{>}}
    }
]  (z2p) -- (z2);
\node[right] at (7.8, 3.1) {\small{$\overline{xz_2}$}};

% Draw points (dots)
\fill (z0p) circle (1.5pt) node[above left] {$z_0'$};
\fill (z1p) circle (1.5pt) node[above] {$z_1'$};
\fill (zp)  circle (1.5pt) node[above right] {$z'$};
\fill (z2p) circle (1.5pt) node[above left] {$z_2'$};
\fill (z3p) circle (1.5pt) node[above right] {$z_3'$};

\fill (z0) circle (1.5pt) node[below left] {$z_0$};
\fill (z1) circle (1.5pt) node[below right] {$z_1$};
\fill (z)  circle (1.5pt) node[below] {$z$};
\fill (z2) circle (1.5pt) node[below] {$z_2$};
\fill (z3) circle (1.5pt) node[below right] {$z_3$};

% Crosses and Extra Nodes

\node[right] at (1, -0.8) {\small{$B_{k+1}$}};
\node[right] at (5.4, -0.8) {\small{$B_{k+1}$}};
%\node[below] at (10.5,-0.2) {$B_{k+1}$};
\node at (6.1,6.7) {\small{$B_{k}$}};

\draw[dashed] (0, 2.5) -- (11.8, 2.5) node[right] {$t_k \varepsilon_k$};
\end{tikzpicture}
\caption{Case $k>k_x$ when constructing  $\Gamma_x$.}
\label{fig:example_step_2}
\end{minipage}
\end{figure}

\medskip

$(3)$ Let us show that $E$ is a $W^{1,1}$-extension set. According to  Lemma \ref{lem:hom-full-norm} (a),  since $E$ is bounded, it is enough to prove that $E$ is a $L^{1,1}$-extension set. This way we can take advantage of the scaling invariance property of the homogeneous Sobolev seminorm. The idea is to build extension operators for every $k,j$,
 $$T_{k,j}\colon L^{1,1}( Q_{k,j}\setminus I_{k,j})\to L^{1,1}(Q_{k,j}) $$
 where $Q_{k,j}$ are pairwise disjoint closed squares such that $\overline{I}_{k,j}\subset Q_{k,j}$. Namely, we define
 $$Q_{k,j}=Q\left(c_{k,j}, \tfrac{2\epsilon_k}{3}\right) ,$$
 Next, we define a bounded continuous linear operator, whose existence follows from \cite{G-BRT24},
 $$\widetilde T \colon L^{1,p}(Q\setminus I)\to L^{1,1}(Q)$$
 where $Q=Q(0,2/3)$ and $I$ is the open equilateral triangle with one vertex at the point $(0,-1/3)$ and the opposite side lying on $\R\times \{0\}$.  Next, for every $x\in Q_{k,j}$, $u\in W^{1,1}(Q_{k,j}\setminus I_{k,j})$ and all $k,j$ we let
 $$T_{k,j}(u)(x)=\widetilde T(u_{k,j})\left(\frac{x-c_{k,j}}{\epsilon_k}\right),\quad \text{where} \; u_{k,j}(y)=u(\epsilon_k y+c_{k,j}),\;\forall y\in Q.$$  It is clear that $\|T_{k,j}\|\leq \|\widetilde T\|$ for all $k,j$, thanks to the scaling invariance of the homogeneous Sobolev seminorm. This allow to define an extension operator $T\colon L^{1,1}(E)\to L^{1,1}([-1,2]\times[-1,1])$ as
 $$T(u)(x)=\begin{cases}
     u(x)&\text{if}\; x\notin \bigcup_{k,j}I_{k,j}\\
     T_{k,j}(u)(x)&\text{if}\; x\in I_{k,j}
 \end{cases} .$$

\begin{figure}

\begin{center}
\begin{tikzpicture}[scale=1]

\fill[black!70, scale=2.3] (-0.866,0) rectangle (0.866,1.732);

\filldraw[color=black!70, fill=white, scale=2.3](0,0) -- (-0.5,0.866) -- (0.5,0.866) -- cycle;;

\end{tikzpicture}
\end{center}
\caption{The set $Q\setminus I$ is a $W^{1,1}$-extension set without a weak $(1,1)$-Poincaré inequality.}
   \label{fig:triangle-square}

\end{figure}

\medskip

 $(4)$ For any $1<\eta<\alpha$ we fix $k_0\in\N$ so that for all $k\geq k_0$ we have  $ \tfrac{1}{3}<t_{k}<\tfrac{\eta}{3}$. Therefore, all lower vertexes of the triangles after iteration $k_0$ lie below the segment $[0,1]\times \{0\}$, that is $\left(t_k-\tfrac{\eta}{3}\right)\varepsilon_k<0$ for all $k\geq k_0$. In particular, for every $k\geq k_0$, using that the family of triangles $\{\eta I_{k,j}\}^{n_k}_{j=1}$ is disjoint (by the definition of $\alpha$ and since $\eta<\alpha$),
\[
\mathcal H^1\left(([0,1]\times\{0\}) \cap \bigcup^{n_k}_{j=1}\eta I_{k,j}\right) =\dfrac{2}{\sqrt{3}}\left( \dfrac{\eta}{3}-t_k \right) \epsilon_k n_k\in \left[\dfrac{2}{\sqrt{3}}\left( \dfrac{\eta}{3}-t_k \right), \dfrac{2}{\sqrt{3}}\left( \dfrac{\eta}{3}-t_k \right)(1+\epsilon_k)  \right].
\]
Consequently,  $$\lim_{k\to\infty}\mathcal H^1\left(([0,1]\times\{0\}) \cap \bigcup^{n_k}_{j=1}\eta I_{k,j}\right)=\dfrac{2(\eta-1)}{3\sqrt{3}}>0 .$$ 
A standard application of the Lebesgue differentiation theorem, noting that the triangles $\{I_{k,j}\}^{n_k}_{j=1}$ are equally distributed, yields 
\begin{equation}\label{eq:last_lemma}
\mathcal H^1\left(([0,1]\times\{0\}) \setminus  \bigcup_{k\ge N}\bigcup^{n_k}_{j=1}\eta I_{k,j}\right) =0
\quad \text{for all $N\in\N$}.
\end{equation}
We therefore claim that for any choice of parameters $1 < \eta <\tau\leq 3$ and open neighborhoods $U_{k,j}$ with $\eta I_{k,j} \subset U_{k,j} \subset \tau I_{k,j}$ the set
\[
E_{N}=\R^2 \setminus \bigcup_{k\ge N}\bigcup^{n_k}_{j=1}U_{k,j}
\]
does not locally satisfy a weak $(1,1)$-Poincar\'e inequality, for all $N\in\N$. Indeed, take points from $E_{N}$ that lie as close as we wish to the segment $(0,1) \times \{0\}$ from below and above; recall that by definition of $\alpha$, for a fixed $k\in\N$ the family $\{U_{k,j}\}^{n_k}_{j=1}$ is disjoint. Then, using \eqref{eq:last_lemma} we have that 
\begin{equation}\label{eq:examp_final}
\mathcal H^1(E_{N}\cap ((0,1)\times \{0\}))=0.
\end{equation}
Suppose by contradiction that the set $E_{N}$ satisfies the weak $(1,1)$-Poincaré inequality with constants $(\lambda,C)\in [1,\infty)\times (0,\infty)$. 
In such case, let $\widetilde B=B\left(\left(\tfrac{1}{2},0\right),\tfrac{1}{8}\right)$  and define a function 
$$u\colon\dfrac{1}{\lambda}\widetilde B\cap E_{N}\to\R  \quad ;\quad u(y_1,y_2)=\begin{cases}1 & \text{if}\;y_2\geq 0\\
0 & \text{if}\;y_2< 0
\end{cases}.$$
Thanks to property \eqref{eq:examp_final}, we have $u\in W^{1,1}(\widetilde B\cap E_{N})$ with $1$-weak upper gradient equal to zero, $\rho_u=0$. Note that the set of of curves crossing from $E_{N}\cap \{(y_1,y_2)\in\R^2:\, y_2> 0\}$ to $ E_{N}\cap \{(y_1,y_2)\in\R^2:\, y_2< 0\}$ within $E_{N}$ must intersect the $\mathcal H^1$-zero set $E_{N}\cap ((0,1)\times \{0\})$, so their $1$-modulus  is zero (see \cite[Theorem 5.12]{EG2015} and \cite[Proposition 1.48]{BB11}). Then, it is clear that,
$$  0<\dfrac{1}{|\widetilde B\cap E_{N}|}\int_{\widetilde B\cap E_{N}}|u(x)-u_{\widetilde B\cap E_{N}}|\,dx\leq  \dfrac{C\diam(\widetilde B\cap E_{N})}{| \lambda \widetilde B|}\int_{\lambda\widetilde B\cap E_{N}}\rho_u(x)\, dx=0,$$
and we reached a contradiction.
\end{proof}

\begin{remark}
Interestingly, the set $E$ from Example \ref{eq:final_example} satisfies the weak $(1,1)$-Poincaré inequality because we can construct Semmes families of curves that pass through the lower empty half-space $[-1,2]\times [-1,0]$. However, our proof of the $W^{1,1}$-extension property does not exploit this geometric feature. In fact, the disjoint neighborhoods of every triangle (see Figure \ref{fig:triangle-square}), used to define the extension fail to satisfy the weak $(1,1)$-Poincaré inequality, 
yet the extension remains possible as previously explained. 
Furthermore, this approach shows that $E \cap ([-1,2]\times [0,1])$ is a $W^{1,1}$-extension set even though the weak $(1,1)$-Poincaré inequality fails on it.

We suspect that there exist other methods to prove the $W^{1,1}$-extension property on Example \ref{eq:final_example}, which should rely on the weak $(1,1)$-Poincaré inequality of $E$, and that  such methods could provide valuable insights toward solving Question \ref{question}.
\end{remark}

\end{example}

\section{Equivalence of $BV$ and $W^{1,1}$-extension domains in the plane}\label{sec:equiv_BV_W11}

The main goal of this section is to show that every domain $\Omega\subset\R^2$  for which a weak $(1,1)$-Poincaré inequality holds, the $W^{1,1}$-extension and the $BV$-extension properties are equivalent. This is the content of  Theorem \ref{prop:equiv_of_BV_and W11}. For the proof, we will use some ideas from the characterization of $W^{1,1}$-extension domains in terms of the strong extension of sets of finite perimeter given in \cite{BR21}, together with the next key fact.

\begin{proposition}\label{prop:(1,1)_to_purely_unrect}
    Let $\Omega\subset\R^n$ be a bounded domain\footnote{The result is also true for unbounded domains, but in order to prove Theorem \ref{prop:equiv_of_BV_and W11} we are only interested in the bounded case.} that is Ahlfors regular and satisfies the weak $(1,1)$-Poincaré inequality. Then, if we write $\R^n\setminus \overline \Omega=\bigcup_{i\in I}\Omega_i$ where each $\Omega_i$ is open and connected, we have that $\partial \Omega\setminus \bigcup_{i\in I} \overline{\Omega}_i$ is purely $(n-1)$-unrectifiable. 
\end{proposition}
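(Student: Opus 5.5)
Argue by contradiction. Set $H=\partial\Omega\setminus\bigcup_{i\in I}\overline\Omega_i$ and suppose $H$ is not purely $(n-1)$-unrectifiable. Then there is a Lipschitz image $\Sigma=f(\R^{n-1})$ with $\mathcal H^{n-1}(\Sigma\cap H)>0$. By the standard structure theory of rectifiable sets, we may replace $\Sigma$ by a $C^1$ hypersurface $M$ with $\mathcal H^{n-1}(M\cap H)>0$. Take a point $x_0\in M\cap H$ that is an $\mathcal H^{n-1}$-density point of $M\cap H$, so that
\[
\mathcal H^{n-1}(M\cap H\cap B(x_0,r))\ge (1-o(1))\,\omega_{n-1}r^{n-1}.
\]
At small scales around $x_0$, the surface $M$ is almost flat and splits $B(x_0,r)$ into two half-balls $B^\pm_r$.

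The key geometric step uses the fact that $|\partial\Omega|=0$, which holds because $\Omega$ is open and Ahlfors regular. Hence almost every point of $B(x_0,r)\setminus\overline\Omega$ lies in some $\Omega_i$. Every complementary component $\Omega_i$ is disjoint from $H$, and $\overline\Omega_i\cap H=\emptyset$. Consider the components $\Omega_i$ that meet $B(x_0,r)$ and also cross $M$. Their closures cover only an $\mathcal H^{n-1}$-small part of $M\cap B(x_0,r)$. This should force $\Omega$ to occupy a definite proportion of each side of $M$ near $x_0$, since Ahlfors regularity gives $|B(x,s)\cap\Omega|\gtrsim s^n$ for every $x\in\partial\Omega$.

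More precisely, choose points $y^\pm\in\Omega\cap B^\pm_{r}$ at distance about $r$ from $M$. Ahlfors regularity around suitable boundary points near $x_0$ then gives
\[
|B(x_0,r)\cap\Omega\cap B^\pm_r|\ge\delta r^n .
\]
Define $F=\Omega\cap B^+$, the part of $\Omega$ on one side of $M$ inside a fixed ball around $x_0$. Its relative perimeter satisfies
\[
P(F,B(x_0,\lambda_p r)\cap\Omega)\le \mathcal H^{n-1}\bigl(M\cap\Omega\cap B(x_0,\lambda_p r)\bigr)+\text{(negligible)}.
\]
Since $M\cap H\subset\partial\Omega$ is disjoint from $\Omega$, and $x_0$ is a density point of $M\cap H$, the right-hand side is $o(r^{n-1})$.

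Now apply Lemma~\ref{lem:Iso_per.} at a center $x\in\Omega$ very close to $x_0$, with radius comparable to $r$. This gives
\[
P(F,B(x,\lambda_p r)\cap\Omega)\ge C\delta r^{n-1}.
\]
This contradicts the $o(r^{n-1})$ upper bound once $r$ is small enough. Some care is needed to cut off at the boundary of a slightly larger ball: choose $F$ by intersecting with a ball of radius $R\gg\lambda_p r$ and use the locality of relative perimeter inside $B(x,\lambda_p r)$.

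The main obstacle is the two-sided mass lower bound $|\Omega\cap B^\pm_r|\gtrsim r^n$. We must rule out that near $x_0$ one side of $M$ is essentially filled by complement components whose closures avoid $x_0$. If one side were mostly complement, then infinitely many small components $\Omega_i$ would accumulate along $M$ at $x_0$, while $\overline\Omega_i\not\ni x_0$. My first attempt: pick any point $p\in B^+_{r/2}$ with $p\in\partial\Omega$, which exists as long as that side is not entirely inside a single $\Omega_i$ whose closure would contain $x_0$. Then Ahlfors regularity at $p$, at a scale comparable to $\operatorname{dist}(p,M)$, gives the needed mass on the $+$ side. The difficulty is keeping the scale comparable to $r$ uniformly. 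To handle it, choose $p$ as the point of $\partial\Omega\cap B^+_{r/2}$ farthest from $M$, and argue that the complementary region above $p$ is contained in a single component reaching near $x_0$ at all scales. That component's closure would then contain $x_0$, contradicting $x_0\in H$.
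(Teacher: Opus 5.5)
\textbf{Where the proposal matches the paper.} Your architecture is the paper's. You take a rectifiable piece $M$ of $H$ and let $F=\Omega\cap B^+$ be the part of $\Omega$ on one side of it. You bound $P(F,B(x_0,\rho)\cap\Omega)$ by $\mathcal H^{n-1}(M\cap\Omega\cap B(x_0,\rho))$, which is $o(\rho^{n-1})$ at density points of $M\cap H$ because $M\cap H$ misses $\Omega$. You then contradict this with Lemma~\ref{lem:Iso_per.}.

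\textbf{The gap.} It is exactly the step you flag as the main obstacle. The two-sided bound $|\Omega\cap B^\pm_r|\ge\delta r^n$ is asserted, not proved. In the form you use it (for all small $r$, via points $y^\pm\in\Omega$ at distance $\sim r$ from $M$) it does not follow from Ahlfors regularity and $x_0\in H$. Here is a planar counterexample. Start from the slit disc $B(0,1)\setminus([-\tfrac12,\tfrac12]\times\{0\})$. Remove the closed half-annuli $\{\rho_k\le|z|\le r_k/2,\ z_2\ge\tau_k\}$ with $\tau_k,\rho_k=o(r_k)$ and $r_{k+1}<\rho_k$. The result is an Ahlfors regular domain and the whole slit lies in $H$, yet $|\Omega\cap B^+_{r_k/2}|=o(r_k^2)$ at $x_0=0$.

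Your farthest-point sketch does not close this. At a single scale it only places the region above $p$ inside one component $\Omega_{i(r)}$, and that closure need not contain $x_0$. A contradiction appears only if the bad configuration persists at all small scales, so the components can be chained. At best this gives the bound along some sequence of scales, and separately for each side.

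\textbf{First missing idea: the cone argument.} Suppose that near $x_0$ the surface $M$ is an $L$-Lipschitz graph. Consider the truncated open cone
\[
K_r=\{y:\,y_n-x_{0,n}>2L|y'-x_0'|\}\cap B(x_0,r).
\]
It is connected and $x_0\in\overline{K_r}$. If $K_r\cap\Omega=\emptyset$, then, being open, $K_r\cap\overline\Omega=\emptyset$. So $K_r$ lies in a single $\Omega_i$, giving $x_0\in\overline{\Omega_i}$ and contradicting $x_0\in H$. Hence there is $y\in\Omega\cap K_r$. Put $s=|y-x_0|\le r$. The ball $B(y,cs)$, with $c=c(L)$, lies in the wider cone of slope $L$, hence on the $+$ side of $M$, so $\Omega\cap B(y,cs)\subset F$. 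Ahlfors regularity \emph{at the point $y\in\Omega$} then gives $|F\cap B(x_0,2s)|\gtrsim s^n$. Writing $g_\pm(r)=r^{-n}|\Omega\cap B^\pm\cap B(x_0,r)|$, this yields $\limsup_{r\to0}g_\pm(r)>0$. The scales may differ for the two signs.

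\textbf{Second missing idea: common scales.} The functions $g_\pm$ are continuous in $r$, and $g_++g_-\ge c_0>0$ by Ahlfors regularity. Suppose $\min(g_+,g_-)<\eta$ for all small $r$, with $\eta$ small. Then $g_+-g_-$ takes both signs at arbitrarily small $r$. At a zero one gets $\min(g_+,g_-)\ge c_0/2>\eta$, a contradiction. So there are $r_k\to0$ at which both sides carry mass $\gtrsim r_k^n$. Only along this sequence can Lemma~\ref{lem:Iso_per.} be applied against the $o(r^{n-1})$ perimeter bound; this is how the paper completes the argument.
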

\begin{proof}
Call $C_p>0$, $\lambda_p\geq 1$ the Poincaré constants and $C_a>0$  the Ahlfors regularity constant of $\Omega$.

We argue by contradiction. Call $ H= \partial \Omega\setminus \bigcup_{i\in I} \overline{\Omega_i}$ and suppose that there exists an $L$-Lipschitz map $f\colon \R^{n-1}\to\R$ so that, after a suitable rotation if necessary, 
$$\mathcal H^{n-1}(\text{Graph}(f)\cap H)>0.$$
The set $\R^n\setminus \text{Graph}(f)$ consists of two open connected components that we denote by
\[
A := \{x = (x_1,\dots,x_n) \in \mathbb R^n \,:\, x_n > f(x_1,\dots,x_{n-1})\}
\]
and 
\[
B := \{x = (x_1,\dots,x_n) \in \mathbb R^n \,:\, x_n < f(x_1,\dots,x_{n-1})\}.
\]
Let us also write $F = \Omega \cap A$ and $G = H \cap \text{Graph}(f)$. Notice that $\partial^M F\cap\Omega=\text{Graph}(f)\cap \Omega$ so, according to \eqref{eq:Per-H^n-1}, $F$ is a set of finite perimeter on $\Omega$. Observe also that for any given  $r>0$ and $x\in G$ we have
\begin{equation}\label{eq:1-unrect_1}
P(F,B(x, r)\cap\Omega)=\mathcal H^{n-1}(\text{Graph}(f)\cap B(x, r)\cap  \Omega).
\end{equation}
We first prove that there exists some constant $C>0$ so that for every $x\in G$
\begin{equation}\label{eq:density1}
\limsup_{r\to 0}\dfrac{|F\cap B(x, r)|}{r^{n}}\geq C
\end{equation}
and
\begin{equation}\label{eq:density2}
\limsup_{r\to 0}\dfrac{|(\Omega \setminus F) \cap B(x, r)|}{r^{n}}\geq C.
\end{equation}
The situation being symmetric, it is enough to prove the first inequality \eqref{eq:density1}. Let us fix $x\in G$.
Since $x\in \text{Graph}(f)$ and $f$ is $L$-Lipschitz, the set 
\[
 R_{x,L} = \{y = (y_1,\dots, y_n) \in \mathbb R^n \,:\, y_n - x_n>L|(x_1,\dots,x_{n-1}) - (y_1,\dots,y_{n-1})|\}
\]
does not intersect $\text{Graph}(f)$. Since also $x \in H$, we have that $x \notin \partial \Omega_i$ for all $i$. Thus, for every $r>0$ we have  
\[
R_{x,2L} \cap B(x,r) \cap \Omega \neq \emptyset.
\]
This means that there exists a sequence of points $(x^i)_{i\geq 1} \subset R_{x,2L} \cap \Omega$ such that $|x^i-x| \to 0$. Let us call $r_i=|x^i-x|$. Since $f$ is $L$-Lipschitz, writing $\delta= \frac{1}{4(L+1)}$, we have 
\[
B\left(x^i,\delta r_i\right) \subset R_{x,L} \subset  A.
\]
Using that $x^i\in\Omega$ and the Ahlfors regularity of $\Omega$ we know that $|B(x^i,\delta r_i)\cap \Omega|\geq C_a C(n,\delta) r^n_i$. This proves \eqref{eq:density1}. The inequality \eqref{eq:density2} follows by repeating the above argument with a cone to the opposite direction from $R_{x,L}$.

By the continuity of the functions
\[
r\longmapsto\dfrac{|F\cap B(x, r)|}{r^{n}} \qquad\text{and}\qquad r\longmapsto\dfrac{|(\Omega \setminus F) \cap B(x, r)|}{r^{n}}
\]
and using the Ahlfors regularity of $\Omega$, that is $|\Omega \cap B(x, r)|>C_a r^n$ for all $0 < r < \diam(\Omega)$, we get
\begin{equation}\label{eq:density3}
\limsup_{r\to 0}\, \left(\min\left(\dfrac{|F\cap B(x, r)|}{r^{n}},\dfrac{|(\Omega \setminus F) \cap B(x, r)|}{r^{n}}\right)\right)\geq C.
\end{equation}
Using the fact that $\Omega$ satisfies a weak $(1,1)$-Poincaré inequality and \eqref{eq:density3} we can apply Lemma \ref{lem:Iso_per.} for the set $F\subset\Omega$ and the ball $B(x,2\delta r_i)\cap \Omega$ in order to get
\begin{equation}\label{eq:1-unrect_2}
\limsup_{r\to 0}\dfrac{P(F,B(x,\lambda_p r)\cap\Omega)}{r^{n-1}}\geq C>0.
\end{equation}
From \eqref{eq:1-unrect_1} and \eqref{eq:1-unrect_2} we get that for every $x\in G$,
\[
\limsup_{r\to 0}\dfrac{\mathcal H^{n-1}(B(x,\lambda_p r)\cap \text{Graph}(f)\cap\Omega)}{r^{n-1}}\geq C>0.
\]
However, by \cite[Theorem 2.6]{EG2015} we know that for $\mathcal H^{n-1}$-almost every point $x \in G$ we must have
\[
\lim_{r\to 0}\dfrac{\mathcal H^{n-1}(B(x,\lambda_p r)\cap \text{Graph}(f)\cap \Omega)}{r^{n-1}}=0.
\]
Since $\mathcal H^{n-1}(G)>0$, we get the desired contradiction.
\end{proof}
\begin{remark}
     Proposition \ref{prop:(1,1)_to_purely_unrect} can fail for closed sets $E\subset \R^2$. One may think about different examples: the fat Sierpi\'nski carpet from Section \ref{sec:Siperpinski carpet}, or a modification of \cite[Example 4.2]{BR21} in order to have the weak $(1,1)$-Poincaré inequality.
\end{remark}

\begin{proof}[Proof of Theorem \ref{prop:equiv_of_BV_and W11}]
The fact that the $W^{1,1}$-extension property implies the $BV$-extension property can be found in \cite[Lemma 2.4]{KMS2010}.
For the other implication assume $\Omega$ is a bounded $BV$-extension domain. By \cite[Proposition 2.3]{BR21}, we know that $\Omega$ is Ahlfors regular. Hence, we can apply Proposition \ref{prop:(1,1)_to_purely_unrect} to see that $\partial \Omega\setminus \bigcup_{i\in I} \overline{\Omega_i}$ is purely $1$-unrectifiable. Finally, by \cite[Theorem 1.4]{BR21}, we conclude that $\Omega$ is a $W^{1,1}$-extension domian.
\end{proof}

\begin{remark}
If $\Omega\subset \R^2$ is a domain with the  weak $(1,1)$-Poincaré inequality that in addition admits a linear $BV$-extension operator we do not know if we can get linearity for the $W^{1,1}$-extension operator as well.  The proof of \cite[Theorem 1.4]{BR21} does not help to give an answer to this question.
\end{remark}

\section*{Acknowledgements}
The authors want to thank  Sylvester Eriksson-Bique for fruitful discussions.

\bibliographystyle{amsplain}
\bibliography{biblio.bib}

\end{document}